\newtheorem{theorem}{Theorem}[section]
\newtheorem{proposition}[theorem]{Proposition}
\newtheorem{lemma}[theorem]{Lemma}
\newtheorem{corollary}[theorem]{Corollary}
\theoremstyle{definition}
\newtheorem{definition}[theorem]{Definition}
\DeclareMathOperator*{\argmin}{arg\,min}
\renewcommand{\subset}{\subseteq}
\renewcommand{\hat}{\widehat}
\renewcommand{\tilde}{\widetilde}
\renewcommand{\epsilon}{\varepsilon}
\newcommand{\commentout}[1]{}
\def\mix{\text{mix}}
\def\<{\langle}
\def\>{\rangle}
\def\({\Big(}
\def\){\Big)}
\def\bfA{{\bf A}}
\def\bfB{{\bf B}}
\def\C{\mathbb{C}}
\def\bfK{{\bf K}}
\def\N{\mathbb{N}}
\def\R{\mathbb{R}}
\def\S{\mathbb{S}}
\def\T{\mathbb{T}}
\def\bfW{{\bf W}}
\def\Z{\mathbb{Z}}
\numberwithin{equation}{section}
\title{Generalization error of minimum weighted norm and kernel interpolation}
\author{Weilin Li\thanks{Courant Institute of Mathematical Sciences, New York University. Email: weilinli@cims.nyu.edu.}}
\begin{document}

\maketitle

\begin{abstract}
	We study the generalization error of functions that interpolate prescribed data points and are selected by minimizing a weighted norm. Under natural and general conditions, we prove that both the interpolants and their generalization errors converge as the number of parameters grow, and the limiting interpolant belongs to a reproducing kernel Hilbert space. This rigorously establishes an implicit bias of minimum weighted norm interpolation and explains why norm minimization may either benefit or suffer from over-parameterization. As special cases of this theory, we study interpolation by trigonometric polynomials and spherical harmonics. Our approach is from a deterministic and approximation theory viewpoint, as opposed to a statistical or random matrix one.
\end{abstract}

{\bf Keywords: } Generalization error, interpolation, kernel spaces, weighted norm 

{\bf AMS Math Classification: } 41A05, 41A29, 42A15, 94A20

\section{Introduction}
\subsection{Motivation}

Deep neural networks contain significantly more parameters than data points and run the risk of over-fitting, yet they still perform well on new examples \cite{zhang2016understanding}. This behavior appears to contradict classical learning wisdom, which predicts that over-parameterized methods typically result in poor generalization and advocates for models whose complexities are less than the number of training points. 

The double descent phenomenon was proposed in \cite{belkin2019reconciling,belkin2018understand} as a resolution to this apparent paradox. Experimental evidence shows that for certain interpolators and datasets, for fixed $n$ samples, the generalization error as a function of the number of parameters $p$ appears to behave differently in two regimes. In the under and exactly parameterized regime $p\leq n$, the error curve follows a classical ``U" shaped curve with maximum error at $p\approx n$. In the over-parameterized regime $p>n$, the error decreases and its infimum is in the limit $p\to\infty$. The terminology ``double descent" is attributed to the generalization error decreasing again in the over-parameterized regime after the first descent in the under-parameterized part of the curve. 

While the ``U" behavior of the curve is rationalized by the bias-variance trade-off \cite{cucker2002mathematical}, the over-parameterized regime is less (well) understood, yet highly relevant for modern learning algorithms. For instance, as the number of parameters increases, so do the Rademacher complexities and covering numbers of the corresponding function classes. Thus, ubiquitous statistical learning techniques that rely on controlling these quantities, such as those found in \cite{mendelson2003few}, predict that the generalization error should increase with the number of parameters, not decrease. 

There has been significant recent interest \cite{belkin2018overfitting,liang2018just,belkin2019does,belkin2019two,hastie2019surprises,montanari2019generalization,muthukumar2020harmless,liang2020multiple,chen2020multiple,xie2020weighted,liang2020precise,pagliana2020interpolation,bartlett2020failures,liu2020kernel} in theoretically understanding the generalization error of simple over-parameterized methods and models. Given the prevalent use of kernel and optimization algorithms in data science and machine learning, and the importance of weighted norms, we study the error of interpolants that have minimum weighted norm. Our approach this problem is from a deterministic and approximation theory viewpoint, as opposed a statistical or random matrix one.

\subsection{Contributions}

Let us briefly describe our framework. Consider a weight $\omega$ and associated norm $\|\cdot\|_{\Phi,\omega}$ on the coefficient space of a sequence of functions $\Phi$. Given a collection of $n$ data points, for each sufficiently large $p\leq\infty$ (including $p=\infty$), let $f_p$ be the interpolant chosen to have minimum $\|\cdot\|_{\Phi,\omega}$ norm among all interpolants spanned by the first $p$ functions of $\Phi$. A classical example is trigonometric interpolation by polynomials of degree $p$ and weights generating a Sobolev norm. To see if $f_p$ approximates a given $f$, we study the $L^2_\mu$ error 
$
E_2(f,f_p):=\|f-f_{p}\|_{L^2_\mu(\Omega)} 
$
of this minimum weighted norm interpolant.

Under natural and general conditions on $\Phi$, weight $\omega$, and sampling set, we show that $f_p$ converges to $f_\infty$ in norm. The limiting function $f_\infty$ is the interpolant belonging to a reproducing kernel Hilbert space uniquely specified by the basis and weight. This rigorously establishes an implicit bias of weighted norm minimization: even though the function space consisting of all possible interpolants grows in $p$, minimum weighted norm interpolation always selects particular interpolants that converge to a limiting one. 

As a corollary, we show that $E_2(f,f_p)$ converges to $E_2(f,f_\infty)$ as $p$ increases to infinity. When the interpolated data are samples of $f$, the limiting value $E_2(f,f_\infty)$ is small if the sampling set is sufficiently dense and if $f$ is parsimonious with the weighted norm, which explains why over-parameterization can be helpful for certain target functions $f$. On the other hand, if $E_2(f,f_\infty)$ is large, then using more parameters than necessary could potentially increase the error.

We devote particular attention to two canonical examples. The trigonometric basis with appropriate weights generate isotropic and mixed spaces of smooth functions on the torus. We derive several new upper bounds for $E_2(f,f_\infty)$, which is validated by numerical experiments. The spherical harmonics with appropriate weights generate positive-definite kernel on the unit sphere. We make some new observations about the interpolation and generalization properties of neural tangent kernels.

\subsection{Related work}

There are several recent works that study the generalization of over-parameterized learning methods. For linear target functions, $f(x)=x\cdot \theta$ for some $\theta$, estimators based on minimum norm interpolation with $p<\infty$ parameters were analyzed in \cite{bartlett2020benign,belkin2019two,hastie2019surprises,mei2019generalization,bartlett2020failures}. These results require statistical assumptions on the samples, co-variance matrices, and/or relationship between $n$ and $p$. Special kinds of non-linear target functions were also considered in \cite{hastie2019surprises,mei2019generalization}. Estimates the for ideal generalization error for several linear and non-linear models were given in \cite{muthukumar2020harmless}, but it did not study the performance of any particular algorithm. 

The use of kernel interpolation to approximate large function classes has been extensively studied from an approximation theory viewpoint, see \cite{wendland2004scattered} for an overview. Since strictly positive-definite kernels can interpolate an arbitrary number of data points, they correspond to the $p=\infty$ case. It was recently shown that appropriately scaled random kernels in high dimensions \cite{liang2018just,liang2020multiple,liu2020kernel} and appropriately scaled singular ones \cite{belkin2019does} can both interpolate and approximate.

The use of norm minimization for function approximation has an extensive history. In this paper, we study ``ridgeless" norm minimization, as opposed to Tikonhov regularization, which includes additional terms in the objective function. For target functions whose coefficients are sufficiently sparse, $\ell^1$ minimization is effective \cite{rauhut2016interpolation}, but this approach is considerably different from weighted $\ell^2$ minimization. We also point out that \cite{chandrasekaran2013minimum} uses a different approach that also exploits over-parameterization to interpolate data with functions who have small Sobolev norm.

A recent preprint \cite{xie2020weighted} also explores the generalization capability of weighted norm interpolation and reaches a similar conclusion that regularity of the target function allows for smallness of the generalization error in the over-parameterized regime. While that reference obtains explicit expressions for the generalization error corresponding to Sobolev weights and one-dimensional trigonometric basis with $n$ equally spaced sampling points on $[0,1]$, this paper provides upper bounds corresponding to more general weighted norms beyond Sobolev and allows for irregularly spaced sampling sets.

\subsection{Organization}

The remainder of this paper is organized as follows. In \cref{sec:math}, we state the main assumptions of this paper and introduces minimum weighted norm interpolation. In \cref{sec:convergence}, we provide our most general results. There we discuss convergence of the over-parameterized interpolants and develop a connection to reproducing kernel spaces. Finally, in \cref{sec:Fourier} and \cref{sec:spherical}, we deal with trigonometric and spherical harmonic interpolation, respectively. All proofs are contained in the appendices.

\section{Notation and assumptions}
\label{sec:math}

\subsection{Notation}
Throughout this paper, all measures/functions/sequences/vectors are assumed to be real-valued. Our theory can be adapted to complex-valued ones with appropriate and minor modifications. We use $\N$, $\Z$, $\R^d$, $\T^d$, and $\S^{d-1}$ for the natural numbers, integers, $d$-dimensional Euclidean space, torus, and unit sphere in $\R^d$, respectively.

Let $(\Omega,\mu)$ be a measure space. For each $1\leq r\leq \infty$ and $U\subset\Omega$, we let $L^r_\mu(U)$ be the space of $\mu$-measurable functions $f$ on $U$ such that $|f|^r$ is $\mu$-integrable on $U$ and its norm is denoted $\|f\|_{L^r_\mu(U)}$. We write $L^r(U)$ if $\mu$ is the uniform measure on $U$ and $f|_U$ be the restriction of $f$ to $U$. We let $\|\cdot\|_r$ be the $\ell^r$-norm of a vector and the $\ell^r\to\ell^r$ operator norm. For any sequence or vector $u$, we let $u_k$ be its $k$-th coordinate in the canonical basis. We let $A^t$, $A^{-1}$, and $A^\dagger$ denote the transpose, inverse, and Moore-Penrose pseudo-inverse of a matrix $A$, respectively.

When $\Omega$ is a subset of $\R^d$ or $\T^d$, we let $C^k(\Omega)$ be the space of $k$-times continuously differentiable functions with the usual norm $\|\cdot\|_{C^k(\Omega)}$, and $C^{k,\alpha}(\Omega)$ be the space of $f\in C^k(\Omega)$ such that all $k$-th order derivatives of $f$ are H\"older continuous with parameter $\alpha$. We follow the usual convention for partial derivatives: for a multi-index $\alpha\in\N^d$, we let $|\alpha|=\alpha_1+\cdots+\alpha_d$ and $\partial_x^\alpha =\partial_{x_1}^{\alpha_1}\cdots \partial_{x_d}^{\alpha_d}$.

\subsection{Main assumptions}

\begin{definition}[Metric measure space]
	\label{def:assump0}
	Let $(\Omega,\rho)$ denote a metric space, where the topology is induced by the metric $\rho \colon \Omega\times\Omega\to\R$. Let $\mu$ be a positive Borel measure on $\Omega$ such that $\mu(U)>0$ for every non-empty open set $U\subset\Omega$. Assume that $\Omega=\bigcup_{k=1}^\infty \Omega_k$ where $\Omega_k$ is compact, $\Omega_k\subset\Omega_{k+1}$, and $\mu(\Omega_k)<\infty$ for each $k$.  
\end{definition}

\begin{definition}[Compatibility]
	\label{def:assump1}
	Given a sequence of real-valued, continuous, and $L^2_\mu(\Omega)$ orthonormal sequence $\Phi:=\{\varphi_k\}_{k=1}^\infty$, and a positive sequence $\omega:=\{\omega_k\}_{k=1}^\infty$ that diverges to infinity, we say $\Phi$ and $\omega$  are {\it compatible} if there is a bounded and continuous function $K_{\Phi,\omega}\in L^2_{\mu\times\mu}(\Omega\times\Omega)$ such that 
	\[
	K_{\Phi,\omega}(x,y)=\sum_{k=1}^\infty \omega_k^{-1} \varphi_k(x){\varphi_k(y)} \quad \text{for all } x,y\in\Omega, 
	\]
	where the series is assumed to converge absolutely and uniformly.
\end{definition}

\begin{definition}[Sampling]
	\label{def:assump2}
	We say a finite set $X\subset\Omega$ is {\it a sampling set} for $\Phi$ if there exists a natural number $p<\infty$ such that for any data $y\colon X\to \R$, there exists a function $f\in \text{span}\{\varphi_k\}_{k=1}^p$ that interpolates the prescribed data points $(X,y)$. Let $p_X$ be the smallest $p$ for which this statement holds.
\end{definition}

Let $\hat f$ denote the coefficients of $f\in L^2_\mu(\Omega)$ in the $\Phi$ sequence, where $\hat f_k
:=\int_\Omega f \varphi_k \ d\mu$. We formally define the weighted inner product $\<f,g\>_{\Phi,\omega}:=\sum_{k=1}^\infty \omega_k \hat f_k \hat g_k$ and let $\|f\|_{\Phi,\omega}^2:=\<f,f\>_{\Phi,\omega}$. Let $H_{\Phi,\omega}$ be the collection of all $f$ in the $L^2_\mu(\Omega)$ closed linear span of $\Phi$ for which $\|f\|_{\Phi,\omega}<\infty$. 

For each integer $p\geq p_X$, including $p=\infty$, given prescribed data $(X,y)$, the {\it minimum weighted norm interpolant} is
\[
f_p
:=\argmin \big\{ \|f\|_{\Phi,\omega}\colon f\in \overline{\text{span}\{\varphi_k\}_{k=1}^p} \text{ and }  f=y \text{ on } X\big\}. 
\]
The solution to this optimization problem is unique and can be computed numerically by inverting a system of linear equations. Each weight $\omega$ generates a different norm, so $\omega$ parameterizes a family of algorithms. 

For a given function $f$ defined on $\Omega$, it is possible to ask if the interpolant $f_{p}$ approximates $f$. For each $1\leq q\leq \infty$, we define the $L^q_\mu(\Omega)$ error,
\[
E_q(f,f_p)
:=\|f-f_p\|_{L^q_\mu(\Omega)}.
\]
We primarily view the error as a function of $p$ when $y,X,\Phi,\omega,f$ are fixed. It is important to mention that $f_p$ only depends on $p,y,X,\Phi,\omega$, and that in this definition, $f$ is an arbitrary function. There are two important examples. In the standard statistical learning paradigm \cite{cucker2002mathematical}, $f$ represents the regression function, $y$ are noisy samples of $f$, and $E_q(f,f_p)$ represents the $L^q$ generalization error. A more restrictive setting is the noiseless case where $f$ belongs to some function class and $y_k=f(x_k)$ for each $k$. Our results in \cref{sec:convergence} apply to both, while those in \cref{sec:Fourier} and \cref{sec:spherical} consider the latter.

By definition, $f_p$ belongs to a subspace of dimension $p$ and $p_X$ is the minimum number of basis functions required to interpolate arbitrary data on $X$. We interpret $p/p_X$ as the {\it over-parameterization factor}. If $n$ is the cardinality of $X$, then $p_X\geq n$, but in general $p_X\not=n$. The quantity $E_q(f,f_{p_X})$ is the error of the exactly parameterized minimum weighted norm solution. 

Let us briefly explain and justify our main assumptions. \cref{def:assump0} is automatically satisfied if $\Omega$ is compact. For non-compact $\Omega$ such as $\R^d$, the conditions there ensure that the integral operator associated with $K_{\Phi,\omega}$ is compact on $L^2_\mu(\Omega)$, see \cite{sun2005mercer}, which will be important in the proof of \cref{prop:relation1}. The series expansion of $K_{\Phi,\omega}$ in terms of $\Phi$ and $\omega$ assumed in \cref{def:assump1} is standard. A variety of kernels, often referred to as ``Mercer kernels," can be decomposed as a uniformly and absolutely convergent series via the eigenfunctions of its associated integral kernel operator. The conditions in \cref{def:assump2} are necessary to formulate the main questions of this paper, otherwise $f_p$ is not necessarily well-defined for arbitrary data $y$. As mentioned in the introduction, we will give several examples that satisfy the above assumptions. 

\section{Minimum norm and kernel interpolation}
\label{sec:convergence}

The main results in this section are proved in \cref{appendixA} and we provide necessarily preparatory lemmas in \cref{appendixA1}.  

\subsection{Convergence of minimum weighted norm interpolation}

At the core of our main findings is convergence of $f_p$ to $f_\infty$ and hence automatically $E_q(f,f_p)$ to $E_q(f,f_\infty)$ for any $f$. This will be crucial in explaining and understanding when over-parameterization may be beneficial for minimum norm interpolation. The following theorem is proved in \cref{sec:conv1}.

\begin{theorem}\label{thm:conv1}
	Assume $\Phi$ and $\omega$ are compatible, and $X$ is a sampling set for $\Phi$. Given any data $y$ defined on $X$, for each $p_X\leq p\leq \infty$, let $f_p$ denote the minimum weighted norm interpolant of $(X,y)$. Then $f_p$ converges to $f_\infty$ in $L^q_\mu(\Omega)$ for each $2\leq q\leq \infty$. If additionally $\mu(\Omega)<\infty$, then convergence also holds for $1\leq q<2$.
\end{theorem}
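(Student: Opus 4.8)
The plan is to express each interpolant $f_p$ explicitly via the reproducing kernel associated with the weight restricted to the first $p$ basis functions, and then pass to the limit $p\to\infty$ using the compatibility hypothesis. Concretely, for $p\le\infty$ let $K_p(x,y):=\sum_{k=1}^{p}\omega_k^{-1}\varphi_k(x)\varphi_k(y)$, so that $K_\infty=K_{\Phi,\omega}$. A standard Lagrange-multiplier / orthogonality argument shows that the minimum $\|\cdot\|_{\Phi,\omega}$-norm interpolant lying in $\overline{\operatorname{span}}\{\varphi_k\}_{k=1}^p$ has the form $f_p(\cdot)=\sum_{x\in X} c^{(p)}_x\, K_p(\cdot,x)$, where the coefficient vector $c^{(p)}=(c^{(p)}_x)_{x\in X}$ solves the linear system $G_p\, c^{(p)}=y$ with the Gram matrix $G_p:=\big(K_p(x,x')\big)_{x,x'\in X}$. (When $G_p$ is singular one takes $c^{(p)}=G_p^\dagger y$ and the interpolation constraint is still met because $y\in\operatorname{range}(G_p)$ once $p\ge p_X$; I would record this as one of the preparatory lemmas the paper alludes to in \cref{appendixA1}.) First I would establish this representation and the solvability for every $p_X\le p\le\infty$.

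The second step is the limit of the Gram matrices. Since $X$ is finite and, by compatibility, the series $\sum_k \omega_k^{-1}\varphi_k(x)\varphi_k(x')$ converges (absolutely) for each of the finitely many pairs $(x,x')\in X\times X$, we get $G_p\to G_\infty$ entrywise, hence in any matrix norm. The delicate point is that $G_p$ may be singular for some finite $p$ (indeed for all $p<p_X$), so $G_p^{-1}$ need not converge to $G_\infty^{-1}$; this is exactly where I expect the main obstacle to lie. To handle it I would argue as follows: for $p\ge p_X$ the data vector $y$ lies in $\operatorname{range}(G_p)$ for every such $p$ and in $\operatorname{range}(G_\infty)$, and one shows $c^{(p)}=G_p^\dagger y\to G_\infty^\dagger y=c^{(\infty)}$ by exploiting that the relevant restricted operators $G_p$ converge and are eventually of locally constant rank on the subspace where the solution lives — or, more robustly, by choosing for each $p$ the \emph{minimum-$\ell^2$-norm} solution $c^{(p)}$, noting these are bounded (they stay in a fixed finite-dimensional space and $\|c^{(p)}\|_2\le \|G_p^\dagger\|\,\|y\|_2$ with the pseudo-inverse norms bounded along the sequence since the nonzero singular values of $G_p$ stay bounded away from $0$ for $p\ge p_X$), extracting a convergent subsequence $c^{(p_j)}\to c^\star$, and identifying $c^\star$: passing to the limit in $G_{p_j}c^{(p_j)}=y$ gives $G_\infty c^\star=y$, and a lower-semicontinuity / minimality argument forces $c^\star=G_\infty^\dagger y$, so the whole sequence converges. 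The boundedness-away-from-zero of the smallest nonzero singular value is the technical crux and is where the sampling-set hypothesis (giving a uniform rank $\ge$ something and a well-conditioned "active" block for $p\ge p_X$) does the work.

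The final step converts convergence of the coefficient vectors into convergence of the functions in the stated norms. Write $f_p-f_\infty=\sum_{x\in X}\big(c^{(p)}_x-c^{(\infty)}_x\big)K_p(\cdot,x)+\sum_{x\in X}c^{(\infty)}_x\big(K_p(\cdot,x)-K_\infty(\cdot,x)\big)$. For the first sum, $\|c^{(p)}-c^{(\infty)}\|_\infty\to0$ by Step 2 and each $K_p(\cdot,x)$ is uniformly bounded (the series for $K_{\Phi,\omega}$ converges uniformly, so its partial sums are uniformly bounded on $\Omega$, and moreover bounded in $L^2_\mu(\Omega)$ since $K_{\Phi,\omega}\in L^2_{\mu\times\mu}$). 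For the second sum, $K_p(\cdot,x)\to K_\infty(\cdot,x)$ uniformly on $\Omega$ by the assumed uniform convergence of the kernel series, which also gives convergence in $L^\infty_\mu(\Omega)$ and, when $\mu(\Omega)<\infty$, in $L^q_\mu(\Omega)$ for every $1\le q\le\infty$. When $\mu(\Omega)=\infty$ we only claim $2\le q\le\infty$: here uniform convergence handles $q=\infty$, the bound $K_p\to K_\infty$ in $L^2_{\mu\times\mu}$ (a consequence of compatibility, via dominated convergence against the square-integrable dominating kernel) handles $q=2$ by Minkowski's integral inequality applied to $x\mapsto K_p(\cdot,x)$, and interpolation between $L^2$ and $L^\infty$ covers $2<q<\infty$. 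Assembling these estimates yields $E_q(f,f_p)\to E_q(f,f_\infty)$ as claimed, since $\|f-f_p\|_{L^q_\mu(\Omega)}-\|f-f_\infty\|_{L^q_\mu(\Omega)}$ is controlled by $\|f_p-f_\infty\|_{L^q_\mu(\Omega)}$ via the triangle inequality.
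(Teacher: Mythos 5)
Your overall route---representing $f_p$ through the truncated kernel $K_p$ and the Gram matrix, passing to the limit in the Gram matrices, and then converting coefficient convergence into $L^q_\mu$ convergence of the functions---is the same as the paper's. One remark on the linear-algebra step: for $p\geq p_X$ no pseudo-inverses, subsequence extraction, or lower-semicontinuity arguments are needed, because the sampling hypothesis makes $\bfA_p$ injective and hence $\bfK_p=\bfA_p^t\bfW\bfA_p$ invertible (this is \cref{lem:prep1}); moreover $\bfK_p\succeq \bfK_{p_X}$ in the positive semidefinite order, so $\lambda_{\min}(\bfK_p)\geq\lambda_{\min}(\bfK_{p_X})>0$ uniformly in $p$, and entrywise convergence $\bfK_p\to\bfK$ then gives $\bfK_p^{-1}\to\bfK^{-1}$ directly. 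Your claim that the relevant singular values stay bounded away from zero is correct, but the ``well-conditioned active block'' justification is vague; the monotonicity above is the clean reason, and it is essentially how the paper argues (via Weyl's inequality).

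The genuine gap is in your last step, for $2\leq q<\infty$ when $\mu(\Omega)=\infty$. You justify the $L^2_\mu$ boundedness of $K_p(\cdot,x)$ and the convergence $K_p(\cdot,x)\to K(\cdot,x)$ in $L^2_\mu(\Omega)$ at the sample points by appealing to $K\in L^2_{\mu\times\mu}(\Omega\times\Omega)$ and to convergence $K_p\to K$ in $L^2_{\mu\times\mu}$. This does not yield what you need: convergence of kernels in $L^2_{\mu\times\mu}$ says nothing about their slices along the finitely many \emph{fixed} points $x\in X$, which form a $\mu\times\mu$-null set, so the sections $K_p(\cdot,x)$ at a prescribed $x$ need not converge (nor even be controlled) in $L^2_\mu$; in addition, the ``square-integrable dominating kernel'' invoked for dominated convergence is never exhibited, and the absolute-value majorant of the series need not lie in $L^2_{\mu\times\mu}$. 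The correct and elementary substitute, which is exactly what the paper uses, is orthonormality of $\Phi$ pointwise in $x$: for every $x\in\Omega$,
\begin{equation*}
\big\|K(\cdot,x)-K_p(\cdot,x)\big\|_{L^2_\mu(\Omega)}^2
=\sum_{k>p}\omega_k^{-2}|\varphi_k(x)|^2
\leq \Big(\sup_{k>p}\omega_k^{-1}\Big)\big(K(x,x)-K_p(x,x)\big)
\leq \Big(\sup_{k>p}\omega_k^{-1}\Big)\,\alpha_p,
\end{equation*}
where $\alpha_p:=\sup_{x,y\in\Omega}|K(x,y)-K_p(x,y)|\to0$ by compatibility, and similarly $\|K(\cdot,x)\|_{L^2_\mu(\Omega)}^2\leq\|\omega^{-1}\|_\infty K(x,x)\leq \|\omega^{-1}\|_\infty C_K$ with $C_K:=\sup_x K(x,x)$. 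With these pointwise-in-$x$ bounds in place, your $q=\infty$ estimate via uniform convergence, the interpolation (log-convexity) step for $2<q<\infty$, and the $1\leq q<2$ case under $\mu(\Omega)<\infty$ all go through, and the proof closes along the paper's lines; also note that the ``Minkowski integral inequality'' you cite is only the finite triangle inequality over $x\in X$.
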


\cref{thm:conv1} proves an important property of minimum weighted norm interpolation. As the number of parameters $p$ increases, so does the space of functions that interpolate the given data $(X,y)$. Yet, minimum weighted norm interpolation always selects particular interpolants that converge to a limiting one $f_\infty$. This rigorously establishes an implicit bias of weighted norm minimization. Notice that this result holds for any data $y$, and importantly, does not require $y$ to be generated by function(s) satisfying certain properties. An important consequence of the theorem is convergence of the error between $f_p$ and arbitrary $f$.

\begin{corollary}
	\label{thm:conv2}
	Assume $\Phi$ and $\omega$ are compatible, and $X$ is sampling for $\Phi$. Given any data $y$ defined on $X$, for each $p_X\leq p\leq \infty$, let $f_p$ denote minimum weighted norm interpolant of $(X,y)$. For any $2\leq q\leq \infty$ and function $f\in L^q_\mu(\Omega)$, we have $\lim_{p\to\infty} E_q(f,f_p) = E_q(f,f_\infty)$. If additionally $\mu(\Omega)<\infty$, then convergence also holds for $1\leq q<2$.
\end{corollary}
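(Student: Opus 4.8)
The plan is to deduce the corollary directly from \cref{thm:conv1} via the reverse triangle inequality; no new ideas beyond the convergence of the interpolants themselves are needed. First I would fix $2\le q\le\infty$ (and additionally assume $\mu(\Omega)<\infty$ in case $1\le q<2$) and invoke \cref{thm:conv1} to conclude that $f_p\to f_\infty$ in $L^q_\mu(\Omega)$. In particular this gives $f_\infty\in L^q_\mu(\Omega)$, and since $f\in L^q_\mu(\Omega)$ by hypothesis, all of $f-f_p$, $f-f_\infty$, and $f_p-f_\infty$ lie in $L^q_\mu(\Omega)$ with finite norm, so the norm manipulations below are meaningful.

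Next, recall that $E_q(f,f_p)=\|f-f_p\|_{L^q_\mu(\Omega)}$ and $E_q(f,f_\infty)=\|f-f_\infty\|_{L^q_\mu(\Omega)}$ by definition. Writing $f-f_p=(f-f_\infty)+(f_\infty-f_p)$ and applying the \emph{reverse triangle inequality} $\big|\,\|a\|-\|b\|\,\big|\le\|a-b\|$ in the normed space $L^q_\mu(\Omega)$ yields
\[
\big|E_q(f,f_p)-E_q(f,f_\infty)\big|
=\big|\,\|f-f_p\|_{L^q_\mu(\Omega)}-\|f-f_\infty\|_{L^q_\mu(\Omega)}\,\big|
\le\|f_p-f_\infty\|_{L^q_\mu(\Omega)}.
\]
Letting $p\to\infty$, the right-hand side tends to $0$ by \cref{thm:conv1}, which establishes $\lim_{p\to\infty}E_q(f,f_p)=E_q(f,f_\infty)$.

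There is essentially no obstacle here: the only points requiring a moment's care are (i) confirming that all quantities involved are finite so that the reverse triangle inequality applies — which is why the hypothesis $f\in L^q_\mu(\Omega)$ is imposed and why $f_\infty\in L^q_\mu(\Omega)$ is extracted from \cref{thm:conv1} rather than assumed separately — and (ii) carrying over the case split on $q$, so that the conclusion for $1\le q<2$ is only asserted under the extra hypothesis $\mu(\Omega)<\infty$ that \cref{thm:conv1} requires in that range. The reverse triangle inequality is valid in every normed space, including $L^\infty_\mu(\Omega)$, so a single argument covers all admissible $q$.
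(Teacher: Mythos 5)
Your argument is correct and is exactly the intended one: the paper treats the corollary as an immediate consequence of \cref{thm:conv1}, and the reverse triangle inequality $\big|\,\|f-f_p\|_{L^q_\mu}-\|f-f_\infty\|_{L^q_\mu}\,\big|\le\|f_p-f_\infty\|_{L^q_\mu}\to 0$ is precisely the step being left implicit. Your care about finiteness of the norms and about carrying the $\mu(\Omega)<\infty$ hypothesis for $1\le q<2$ matches the theorem's case split, so nothing is missing.
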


In a nutshell, the corollary implies if
\begin{equation}
	\label{eq:descent}
	E_q(f,f_\infty)\leq E_q(f,f_{p_X}), 
\end{equation} 
then increasing the number of parameters eventually leads to a decrease in the error. One the other hand, it also shows that over-parameterization might make matters worse if $E_q(f,f_\infty)$ is large.

Notice that \cref{thm:conv2} holds for arbitrary $f$, and in particular, does not require any relationship between $f$ and $y$. This might seem counter-intuitive, but it is important to remember that we have not made any claims yet about the limiting error $E_q(f,f_\infty)$ which we call the {\it plateau}. In \cref{sec:Fourier} and \cref{sec:spherical}, we will consider the case where $y$ consists of noiseless samples of $f$ belonging to an appropriate function class. 

\subsection{Relationship with kernel spaces}

There is a strong connection between minimum weighted norm and kernel interpolation when the main assumptions hold. We say a continuous and symmetric function $K\colon U\times U\to\R$ is {\it positive-definite} on $U$ if for any set $X=\{x_j\}_{j=1}^n\subset U$ and for any $u\in\R^n$,
\[
\sum_{j=1}^n \sum_{k=1}^n  K(x_j,x_k) u_j {u_k}
\geq 0. 
\]
We say $K$ is {\it strictly positive-definite} if the above statement holds with a strict inequality instead. For any positive-definite $K$, the closure of $\text{span}\{K(x,\cdot)\colon x\in U\}$ under the inner product
\[
\Big\<\sum_{j=1}^m a_j K(x_j,\cdot),\sum_{k=1}^n b_k K(y_k,\cdot)\Big\>_{H_K(U)}
:=\sum_{j=1}^m \sum_{k=1}^n a_j b_k K(x_j,y_k),
\]
defines a Hilbert space of functions such that for any $f\in H_K(U)$ and $x\in U$, 
\[
f(x)
=\<f,K(\cdot,x)\>_{H_K(U)}.
\]
It is standard to call $H_K(U)$ a {\it reproducing kernel Hilbert space} (RKHS) and it can be shown that it is a space of continuous functions. We refer the reader to \cite{wendland2004scattered,aronszajn1950theory} for further details. 

From an interpolation perspective, strictly positive-definite kernels are advantageous and can be used to interpolate arbitrary data. In our setting where $\Omega=U$, the matrix $\bfK$ containing samples of $K$ on any $X\times X$ is invertible and the {\it kernel interpolant} 
\begin{equation}
\label{eq:kerI}
I_K(x)
:=I_{K}(X,y)(x)
:=\sum_{k=1}^n K(x,x_k) (\bfK^{-1} y)_k,
\end{equation}
belongs to $H_K(\Omega)$ and $I_K(x_j)=y_j$ for each $j$. Note that if $K$ is not strictly positive-definite, then $\bfK$ is not necessarily invertible and interpolation is not always feasible. The following proposition connects our main assumptions with an appropriate RKHS, and is proved in \cref{sec:relation1}.


\begin{proposition}
	\label{prop:relation1}
	Suppose $\Phi$ and $\omega$ are compatible, and that any finite $X\subset \Omega$ is a sampling set for $\Phi$. Then $K:=K_{\Phi,\omega}$ is strictly positive-definite on $\Omega$, and is the unique reproducing kernel for a RKHS such that $\|\cdot\|_{H_K(\Omega)}=\|\cdot\|_{\Phi,\omega}$. For any data $y$ defined on $X$, the $p=\infty$ minimum norm interpolant $f_\infty$ of $(X,y)$ is precisely the $H_K(\Omega)$ reproducing kernel interpolant of $(X,y)$.
\end{proposition}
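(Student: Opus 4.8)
The plan is to establish the three assertions of \cref{prop:relation1} in order, working throughout with the weighted-coefficient description of $H_{\Phi,\omega}$.

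\emph{Strict positive-definiteness.} For a finite $X=\{x_j\}_{j=1}^n\subset\Omega$ and $u\in\R^n$, the uniform and absolute convergence assumed in \cref{def:assump1} lets us interchange the finite sum over $j,k$ with the defining series of $K$, so that $\sum_{j,k} K(x_j,x_k)\,u_j u_k=\sum_{\ell\ge 1}\omega_\ell^{-1}\big(\sum_j u_j\varphi_\ell(x_j)\big)^2\ge 0$; thus $K$ is positive-definite. If this quantity vanishes, then $\sum_j u_j\varphi_\ell(x_j)=0$ for every $\ell$ because $\omega_\ell^{-1}>0$. Since $X$ is a sampling set for $\Phi$, the evaluation map $f\mapsto f|_X$ from $\text{span}\{\varphi_k\}_{k=1}^{p_X}$ onto $\R^X$ is surjective, i.e.\ the $n\times p_X$ matrix $[\varphi_k(x_j)]$ has full row rank $n$; its transpose is therefore injective, and the vanishing relations (for $\ell\le p_X$) force $u=0$. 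Hence $K$ is strictly positive-definite, and in particular the matrix $\bfK$ of $K$-values on any finite $X\times X$ is invertible.

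\emph{Identification of the RKHS.} By \cref{def:assump0} together with \cite{sun2005mercer}, the integral operator $L_K f=\int_\Omega K(\cdot,y)f(y)\,d\mu(y)$ is a compact, positive, self-adjoint operator on $L^2_\mu(\Omega)$, so $K$ is a Mercer kernel; comparing its eigen-expansion with the series in \cref{def:assump1} and invoking uniqueness of the spectral decomposition identifies $\{(\omega_k^{-1},\varphi_k)\}_k$ as the eigensystem of $L_K$ (up to reordering and rotations within eigenspaces). The standard description of the RKHS of a Mercer kernel then yields $H_K(\Omega)=\{\,f=\sum_k c_k\varphi_k:\sum_k\omega_k c_k^2<\infty\,\}=H_{\Phi,\omega}$ with $\|f\|_{H_K(\Omega)}^2=\sum_k\omega_k c_k^2=\|f\|_{\Phi,\omega}^2$; in particular, boundedness of $K$ and Cauchy--Schwarz make the series for each such $f$ converge uniformly, so $H_{\Phi,\omega}$ is genuinely a space of continuous functions and is complete. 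One then checks directly that $K(\cdot,x)\in H_{\Phi,\omega}$ has coefficients $\omega_k^{-1}\varphi_k(x)$ and that $\langle f,K(\cdot,x)\rangle_{\Phi,\omega}=\sum_k\omega_k \hat f_k\,\omega_k^{-1}\varphi_k(x)=\sum_k\hat f_k\varphi_k(x)=f(x)$, which is the reproducing property; since $\text{span}\{K(\cdot,x):x\in\Omega\}$ then has trivial orthogonal complement in $H_{\Phi,\omega}$, it is dense, and uniqueness of the reproducing kernel is the Moore--Aronszajn theorem \cite{aronszajn1950theory}.

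\emph{Identification of $f_\infty$ with $I_K$.} The feasible set in the definition of $f_\infty$ is nonempty: the sampling property provides an interpolant in $\text{span}\{\varphi_k\}_{k=1}^{p_X}$, which has finite weighted norm, and by invertibility of $\bfK$ the kernel interpolant $I_K=\sum_{j} (\bfK^{-1}y)_j K(\cdot,x_j)\in H_K(\Omega)$ also interpolates $(X,y)$. Minimizing $\|\cdot\|_{\Phi,\omega}=\|\cdot\|_{H_K(\Omega)}$ over interpolants in $H_K(\Omega)$ is the classical minimum-norm RKHS interpolation problem: writing any interpolant as $g+h$ with $g\in\text{span}\{K(\cdot,x_j)\}_{j=1}^n$ and $h$ orthogonal to it, the reproducing property gives $h(x_j)=0$ for all $j$, so $g$ also interpolates and $\|g+h\|^2=\|g\|^2+\|h\|^2$; hence the minimizer has $h=0$, and the unique $g$ with $g(x_j)=y_j$ is $I_K$. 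Therefore $f_\infty=I_K$.

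\emph{Main obstacle.} The delicate step is the middle one: reconciling the abstract RKHS $H_K(\Omega)$, the weighted sequence space $\{c:\sum_k\omega_k c_k^2<\infty\}$, and the subspace of $\overline{\text{span}\,\Phi}$ with finite weighted norm as one and the same space of \emph{continuous} functions. On non-compact $\Omega$ this is exactly where \cref{def:assump0}---via compactness of $L_K$ and the Mercer theory of \cite{sun2005mercer}---is essential, since it bypasses otherwise awkward interchanges of summation and integration (the $\varphi_k$ need not lie in $L^1_\mu$ and $\sum_k\omega_k^{-1}$ may diverge). Once this identification and strict positive-definiteness are in hand, the remainder is routine RKHS bookkeeping.
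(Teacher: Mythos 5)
Your proof is correct and follows essentially the same route as the paper's: interchange the finite sum with the kernel series and use the sampling hypothesis (full rank of the evaluation matrix, which is the content of the paper's Lemma~\ref{lem:prep1}) to get strict positive-definiteness, invoke the Mercer/compactness theory of \cite{sun2005mercer} under \cref{def:assump0} to identify the eigensystem of $T_K$ with $(\omega^{-1},\Phi)$ and hence $\|\cdot\|_{H_K(\Omega)}=\|\cdot\|_{\Phi,\omega}$, and conclude via the standard minimum-norm property of the kernel interpolant. You spell out the reproducing-property check and the Pythagorean decomposition $g+h$ that the paper delegates to references, but the mathematical content and key ingredients are the same.
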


We emphasize that not every weighted norm is related to reproducing kernel norms. For instance, this occurs if the uniform convergence condition in the admissibility criteria is violated. An extreme case is when $\omega_k=1$ for each $k$ and $\Phi$ is the trigonometric basis for the one-dimensional torus, in which case, the series attempting to define $K_{\Phi,\omega}$ does not even converge pointwise.

\subsection{The plateau and double descent}
To investigate conditions for which inequality \eqref{eq:descent} holds, we use \cref{prop:relation1}, which shows that  $f_\infty$ can be interpreted as the $K_{\Phi,\omega}$ kernel interpolant of the prescribed data. Upper bounds for such interpolants have been extensively studied, from both approximation theory and statistical viewpoints, which we briefly describe below. 

Approximation theory bounds typically estimate the error in terms of a quantity $h_X$, where $h_X$ is small if the sampling set is densely distributed in $\Omega$. To provide a representative result of this approach, Theorem 11.13 in  \cite{wendland2004scattered} shows that if $\Omega\subset\R^d$ is sufficiently regular, $K:=K_{\Phi,\omega}$ has $2k$ derivatives, and $h_X$ is sufficiently small, then there exists a $C>0$ such that for each $f\in H_{K}(\Omega)$, if $f_\infty$ denotes the kernel interpolant of $(X,f|_X)$, then
\begin{equation}
\label{eq:approx}
E_\infty(f,f_\infty)
\leq C h_X^k \|f\|_{\Phi,\omega}.
\end{equation}
This is just a representative result, since more refined estimates can be given by exploiting additional properties of the kernel, see \cite{wendland2004scattered,narcowich2002scattered,narcowich2004scattered}. We will consider typical examples later on.

Statistical methods require that $\mu$ is a probability measure and all $n$ sampling points of $X$ are drawn i.i.d. from $\mu$. By controlling the Rademacher complexity of reproducing kernel Hilbert spaces, a representative result from \cite{mendelson2003few} and also Theorem 3 in \cite{liang2018just}, shows that for probability exceeding $1-\delta$ over the random draw of $X$, for each $f\in H_{K}(\Omega)$, if $f_\infty$ denotes the kernel interpolant of $(X,f|_X)$, 
\begin{equation}
\label{eq:random}
E_2(f,f_\infty)
\leq C \sqrt{\log (n/\delta)} n^{-1/4} \|f\|_{\Phi,\omega}.
\end{equation}

Both inequalities \eqref{eq:approx} and \eqref{eq:random} show that, for sufficiently dense sampling sets, both $h_X^k$ and $1/n$ are small, and so is the plateau. We have chosen to present the material in this subsection rather informally for several reasons. First, $E_q(f,f_\infty)$ greatly depends on $f,\Phi,\omega,X$, so it is impossible to cover every case that may be of interest. Second, the convergence results in \cref{thm:conv1} and \cref{thm:conv2} are deterministic, so they hold for general situations and we have presented the main ingredients on how to combine it with existing bounds for $E_q(f,f_\infty)$. Third, we will consider in detail two types of examples that are particularly relevant to machine learning.

\subsection{Kernel spaces have near optimal interpolants}

In the previous subsections, we analyzed the generalization properties of minimum weighted norm interpolation. One may be interested in other algorithms, so for this reasons, we focus on an existence question: given samples of a target function, does there exist an interpolant that is also a near optimal approximant? 

There are negative and positive answers to this question. For instance, Runge's phenomenon is a classical example such that if the sampling set consists of uniformly spaced points on an interval, then the extra interpolation constraints may impede optimal approximation by algebraic polynomials. 

The following theorem provides a positive answer to the previous question under the assumption that $f$ belongs to the RKHS associated with $K_{\Phi,\omega}$, and the interpolants are chosen from the span of $\Phi_p:=\{\varphi_k\}_{k=1}^p$. To obtain an idea of the fundamental limits, we observe the simple lower bound, 
\[
\inf_{\substack{g\in \text{span}(\Phi_p)\\ g=f \text{ on } X}} \|f-g\|_{L^2_\mu(\Omega)}
\geq \inf_{h\in \text{span}(\Phi_p)}\|f - h\|_{L^2_\mu(\Omega)}.
\]
The below theorem establishes the reverse inequality up to a small additive constant and it is proved in \cref{sec:opt}.

\begin{theorem}
	\label{thm:opt}
	Assume $\Phi$ and $\omega$ are compatible, and $X$ is a sampling set for $\Phi$. Given any $\epsilon>0$ and $f\in H_{K_{\Phi,\omega}}(\Omega)$, there exist $p<\infty$ and $g\in \text{span}(\Phi_p)$ such that $g$ interpolates $f$ on $X$ and
	\[
	\|f-g\|_{L^2_\mu(\Omega)} 
	\leq \inf_{h\in \text{span}(\Phi_p)}\|f-h\|_{L^2_\mu(\Omega)} + \epsilon.
	\]	
\end{theorem}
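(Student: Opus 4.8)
The plan is to correct the truncated expansion of $f$ by a small term, drawn from a much larger span, that repairs the finitely many interpolation constraints. Since $f\in H_{K_{\Phi,\omega}}(\Omega)$ I would write $f=\sum_{k\ge1}\hat f_k\varphi_k$ with $\sum_k\omega_k\hat f_k^2=\|f\|_{\Phi,\omega}^2<\infty$. Because $\Phi$ is $L^2_\mu(\Omega)$-orthonormal, the partial sum $h_m:=\sum_{k=1}^m\hat f_k\varphi_k$ is the best $L^2_\mu(\Omega)$ approximant to $f$ from $\text{span}(\Phi_m)$, and both $\|f-h_m\|_{L^2_\mu(\Omega)}^2=\sum_{k>m}\hat f_k^2$ and $\|f-h_m\|_{\Phi,\omega}^2=\sum_{k>m}\omega_k\hat f_k^2$ tend to $0$ as $m\to\infty$; this is the only place where orthonormality and membership in the RKHS enter.

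Next, fix $m$ and set $d_m:=(f-h_m)|_X$, and let $r_m$ denote the $p=\infty$ minimum weighted norm interpolant of $(X,d_m)$ (which exists and is unique by the general position of the optimization). The key observation is that $f-h_m$ lies in the $L^2_\mu(\Omega)$-closed span of $\Phi$, has finite weighted norm, and restricts to $d_m$ on $X$, so it is a feasible point for the optimization defining $r_m$, giving $\|r_m\|_{\Phi,\omega}\le\|f-h_m\|_{\Phi,\omega}$. Since $\omega$ diverges to infinity it is bounded below, $\omega_{\min}:=\inf_k\omega_k>0$, so for every $g=\sum_k c_k\varphi_k\in H_{\Phi,\omega}$ one has $\|g\|_{L^2_\mu(\Omega)}^2=\sum_k c_k^2\le\omega_{\min}^{-1}\sum_k\omega_k c_k^2=\omega_{\min}^{-1}\|g\|_{\Phi,\omega}^2$; applied to $r_m$ this yields $\|r_m\|_{L^2_\mu(\Omega)}\le\omega_{\min}^{-1/2}\|f-h_m\|_{\Phi,\omega}\to0$ as $m\to\infty$.

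Now I would choose $m$ large enough that $\|f-h_m\|_{L^2_\mu(\Omega)}<\epsilon/3$ and $\|r_m\|_{L^2_\mu(\Omega)}<\epsilon/3$. For $p\ge p_X$ let $r_m^{(p)}$ be the minimum weighted norm interpolant of $(X,d_m)$ from $\overline{\text{span}(\Phi_p)}$; by \cref{thm:conv1} applied to the data $d_m$, $r_m^{(p)}\to r_m$ in $L^2_\mu(\Omega)$ as $p\to\infty$, so I can pick $p\ge\max(m,p_X)$ with $\|r_m^{(p)}-r_m\|_{L^2_\mu(\Omega)}<\epsilon/3$. Setting $g:=h_m+r_m^{(p)}\in\text{span}(\Phi_p)$ (using $p\ge m$), on $X$ we get $g=h_m+d_m=h_m+(f-h_m)=f$, so $g$ interpolates $f$ on $X$, and
\[
\|f-g\|_{L^2_\mu(\Omega)}
\le\|f-h_m\|_{L^2_\mu(\Omega)}+\|r_m\|_{L^2_\mu(\Omega)}+\|r_m^{(p)}-r_m\|_{L^2_\mu(\Omega)}
<\epsilon .
\]
Since the infimum on the right-hand side of the claimed bound is nonnegative, this establishes $\|f-g\|_{L^2_\mu(\Omega)}\le\inf_{h\in\text{span}(\Phi_p)}\|f-h\|_{L^2_\mu(\Omega)}+\epsilon$.

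The hard part is recognizing the right construction rather than any estimate: $f-h_m$ itself lies in $\text{span}\{\varphi_k\}_{k>m}$, so it is useless as a correction inside $\text{span}(\Phi_m)$, and the minimum weighted norm interpolant of $f$'s own data need not be near-optimal (this is precisely the Runge-type failure the surrounding text alludes to). One must enlarge the span to a much bigger index $p$, where \cref{thm:conv1} guarantees the minimum norm interpolant of the residual is $L^2_\mu(\Omega)$-close to the $p=\infty$ kernel interpolant, whose $L^2_\mu(\Omega)$ size is in turn tied to the small weighted-norm tail of $f$; the remaining work is just an $\epsilon/3$ bookkeeping of the nested limits $m\to\infty$ and then $p\to\infty$.
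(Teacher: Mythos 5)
Your proof is correct and takes a genuinely different route from the paper's, although both share the overall skeleton of ``approximate first, then correct the interpolation error by a small residual interpolant.'' The paper uses a single parameter $p$: it takes $h$ to be the $L^2_\mu$ projection of $f$ onto $\text{span}(\Phi_p)$, writes out the $p$-truncated minimum weighted norm interpolant $g_p$ of the residual data explicitly via the formula $g_p=\sum_k K_p(\cdot,x_k)(\bfK_p^{-1}u)_k$ from Lemma \ref{lem:prep3}, and then bounds $\|g_p\|_{L^2_\mu}$ directly by Cauchy--Schwarz, tracking $\alpha_p$, $\|\bfK_p^{-1}\|_2$, $\|K_p(\cdot,x_k)\|_{L^2_\mu}$, and the tail $\sum_{k>p}\omega_k\hat f_k^2$ separately. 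You instead decouple the truncation index $m$ from the span index $p$, and you replace the explicit-formula estimates with two cleaner abstract facts: (i) the variational optimality of the $p=\infty$ minimum weighted norm interpolant $r_m$ together with the feasibility of $f-h_m$ gives $\|r_m\|_{\Phi,\omega}\le\|f-h_m\|_{\Phi,\omega}\to0$, and since $\omega$ is bounded below, $\|\cdot\|_{L^2_\mu}\lesssim\|\cdot\|_{\Phi,\omega}$; (ii) Theorem \ref{thm:conv1} already gives $r_m^{(p)}\to r_m$ in $L^2_\mu$ for fixed $m$. The price is a nested double limit ($m$ then $p$), but the payoff is that you avoid redoing the kernel-matrix estimates and instead reuse Theorem \ref{thm:conv1} as a black box; as a bonus, your argument actually yields the stronger conclusion $\|f-g\|_{L^2_\mu}<\epsilon$, from which the paper's bound involving $\inf_{h\in\text{span}(\Phi_p)}\|f-h\|_{L^2_\mu}$ follows trivially since that infimum is nonnegative.
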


This theorem shows that both interpolation and near optimal approximation of functions in reproducing kernel Hilbert spaces can be simultaneously achieved. However, it is purely existential as the extremal function(s) cannot be computed directly from the proof.

\section{Example: Fourier series}

\label{sec:Fourier}

The main results of this section are proved in \cref{appendixB} and \cref{appendixB1} provides overview of the organization of the proofs.

\subsection{Isotropic Sobolev spaces}

In this section, we concentrate on the Fourier case. Our domain is the $d$ dimensional torus $\T^d:=[0,1)^d$ with the uniform measure and usual $\ell^2(\T^d)$ metric 
$
d(x,y)^2= \|x-y\|_2^2:= \min_{n\in\Z^d}  \sum_{j=1}^d |x_j-y_j-n_j|^2.
$ We consider the trigonometric orthonormal basis $\Phi=\{\varphi_k\}_{k\in\Z^d}$ where $\varphi_k(x):=e^{2\pi k\cdot x}$ and for each $k\in\Z^d$, and so $\hat f$ is the usual Fourier coefficients of a function $f\in L^2(\T^d)$. 

To put this into the framework discussed in \cref{sec:math}, we must impose an ordering on $\Z^d$. We sort by ascending $\ell^\infty$ norm and then break ties using the standard lexicographic ordering. Consider a weight $\omega$ where $\omega_k:=(1+4\pi^2\|k\|_2^2)^s$ for some fixed natural number $s>d/2$. We call this an (isotropic) {\it Sobolev weight} with smoothness parameter $s$. Following the common abuse of notation, the reproducing kernel is $K_{\Phi,\omega}(x,y)=K_{\Phi,\omega}(x-y)$ where
\[
K_{\Phi,\omega}(t)
:=\sum_{k\in\Z^d} \big(1+4\pi^2\|k\|_2^2\big)^{-s} e^{2\pi ik\cdot t}.
\]
Note the condition $s>d/2$ ensures that this series converges uniformly and absolutely. The corresponding weighted norm is the usual Sobolev norm $\|\cdot\|_{W^s(\T^d)}$, where
\[
\|f\|_{\Phi,\omega}^2
=\sum_{k\in\Z^d} \big(1+4\pi^2\|k\|_2^2\big)^s \, \big|\hat f_k \big|^2
=:\|f\|_{W^s(\T^d)}^2. 
\]
Technically speaking, $W^s(\T^d)$ contains equivalence classes of functions and by the Sobolev embedding theorem, each equivalence class has a representative in $C^{r,\alpha}(\T^d)$ where $r+\alpha=s-d/2$. When we write $f\in W^s(\T^d)$, we refer to its H\"older continuous representative. 

Our use of complex-valued functions in this section is purely for convenience of notation. Since we will only consider interpolation and approximation of real-valued functions, the trigonometric basis $\Phi$ can be equivalently rewritten in terms of sines and cosines, while the kernel $K_{\Phi,\omega}$ can be expressed as a summation of cosines. 

For any $X\subset\T^d$ of cardinality $n$ and arbitrary data $y\colon X\to\C$, the Lagrange interpolant of $(X,y)$ is a trigonometric polynomial with frequencies in $\{0,1,\dots,n-1\}^d$. This shows that every finite $X$ is a sampling set for $\Phi$. The following proposition summarizes some of our findings.

\begin{proposition}
	\label{prop:sob1}
	Let $\Phi$ be the trigonometric basis on $\T^d$ and $\omega$ be a Sobolev weight with smoothness parameter $s>d/2$. Then $\Phi$ and $\omega$ are compatible. Every finite $X\subset \T^d$ is a sampling set for $\Phi$.
\end{proposition}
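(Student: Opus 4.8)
The plan is to verify the two assertions of \cref{prop:sob1} separately, each reducing to an elementary fact about the Sobolev weight. For compatibility (\cref{def:assump1}), I must exhibit a bounded continuous $K_{\Phi,\omega}\in L^2_{\mu\times\mu}(\T^d\times\T^d)$ whose expansion $\sum_k \omega_k^{-1}\varphi_k(x)\overline{\varphi_k(y)}$ converges absolutely and uniformly. The natural candidate is of course $K_{\Phi,\omega}(x,y)=\sum_{k\in\Z^d}(1+4\pi^2\|k\|_2^2)^{-s}e^{2\pi i k\cdot(x-y)}$. The key step is to show $\sum_{k\in\Z^d}(1+4\pi^2\|k\|_2^2)^{-s}<\infty$, which holds precisely because $s>d/2$: comparing with the integral $\int_{\R^d}(1+4\pi^2\|\xi\|_2^2)^{-s}\,d\xi$, or dyadically decomposing $\Z^d$ into shells $\{2^{j-1}\le\|k\|_2<2^j\}$ of cardinality $\lesssim 2^{jd}$ on which the summand is $\asymp 2^{-2js}$, gives a convergent geometric series $\sum_j 2^{j(d-2s)}$. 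Absolute and uniform convergence of the series defining $K_{\Phi,\omega}$ then follows from the Weierstrass $M$-test with $M_k=(1+4\pi^2\|k\|_2^2)^{-s}$, so $K_{\Phi,\omega}$ is continuous (uniform limit of continuous functions); it is bounded since a uniformly convergent series of bounded terms is bounded, and it lies in $L^2_{\mu\times\mu}$ because $\T^d\times\T^d$ has finite measure and $K_{\Phi,\omega}$ is bounded. I should also remark that the Parseval identity gives $\|K_{\Phi,\omega}\|_{L^2_{\mu\times\mu}}^2=\sum_k(1+4\pi^2\|k\|_2^2)^{-2s}<\infty$ directly, which is even weaker to establish.

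For the sampling claim, I follow the hint already given in the text preceding the proposition: for any finite $X\subset\T^d$ with $|X|=n$ and any data $y\colon X\to\R$ (or $\C$), there is a trigonometric polynomial with frequencies in $\{0,1,\dots,n-1\}^d$ interpolating $(X,y)$. The clean way to see this is to enumerate $X=\{x_1,\dots,x_n\}$ and consider the univariate-style construction: along a generic direction, or more robustly, use that the $n\times n$ matrix $\big(e^{2\pi i (j-1)\, t(x_\ell)}\big)_{j,\ell}$ for a suitable scalar linear functional $t(\cdot)$ separating the points of $X$ is a Vandermonde matrix in the distinct nodes $e^{2\pi i t(x_\ell)}$, hence invertible; lifting $t$ to a lattice direction in $\Z^d$ shows the needed polynomial lies in $\text{span}\{\varphi_k : k\in\{0,\dots,n-1\}^d\}$. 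Since these are among the first $p$ functions of $\Phi$ for $p$ large enough under the chosen $\ell^\infty$-then-lexicographic ordering, every finite $X$ is a sampling set, with $p_X$ bounded in terms of $n$. This also records that $p=\infty$ is covered and, combined with the fact that \emph{every} finite $X$ is sampling, sets up the hypothesis needed for \cref{prop:relation1} later.

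The main obstacle is mostly bookkeeping rather than depth: one must be a little careful that the ordering on $\Z^d$ (ascending $\ell^\infty$ norm, ties broken lexicographically) actually places all frequencies in $\{0,\dots,n-1\}^d$ among the first $p$ basis functions for some finite $p$ — true since $\{0,\dots,n-1\}^d$ is a finite set and the ordering is a genuine enumeration of $\Z^d$ — and that the Vandermonde/separating-functional argument produces a polynomial with only nonnegative frequency indices matching the stated frequency box. Neither point is hard; the analytic content is entirely in the convergence of $\sum_k(1+4\pi^2\|k\|_2^2)^{-s}$ for $s>d/2$, and everything else is verification against Definitions~\ref{def:assump1} and~\ref{def:assump2}.
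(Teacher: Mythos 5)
Your compatibility argument is correct and matches the paper's implicit justification: convergence of $\sum_{k\in\Z^d}(1+4\pi^2\|k\|_2^2)^{-s}$ for $s>d/2$ via shell decomposition, followed by the Weierstrass $M$-test, gives absolute/uniform convergence, continuity, boundedness, and membership in $L^2_{\mu\times\mu}$. The paper simply asserts this with the remark that $s>d/2$ ensures uniform and absolute convergence, so you have filled in the details rather than deviated.

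The sampling part has a genuine gap. You propose finding a single lattice direction $v\in\Z^d$ such that $t(x)=v\cdot x$ separates the points of $X$ modulo $1$, then running a one-variable Vandermonde argument with frequencies $0,v,2v,\dots,(n-1)v$. But such a $v$ need not exist. Take $d=2$ and $X=\{(0,0),(\tfrac12,0),(0,\tfrac12),(\tfrac12,\tfrac12)\}$, so $n=4$. For any $v\in\Z^2$, the four values $v\cdot x$ modulo $1$ are $0$, $v_1/2$, $v_2/2$, $(v_1+v_2)/2$, all of which lie in $\{0,\tfrac12\}$; by pigeonhole at least two coincide, so no integer direction separates $X$. (A generic \emph{real} direction does separate, but then the frequencies $jv$ are not in $\Z^d$ and the interpolant is not a trigonometric polynomial.) There is also a secondary slip: even when a separating $v\in\Z^d$ exists, the frequencies $jv$ generally do not lie in $\{0,\dots,n-1\}^d$, so the argument does not yield the specific claim stated in the text about the Lagrange interpolant; it would at best give a sampling set with a larger $p_X$. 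The claim that the $n\times n^d$ interpolation matrix with frequencies in $\{0,\dots,n-1\}^d$ has full row rank is true, but the proof goes by induction on $d$, not by reduction to a single scalar direction: group the points by their last coordinate, apply the univariate Vandermonde in that coordinate to conclude that the coefficients attached to each group satisfy the $(d-1)$-dimensional relations, then invoke the inductive hypothesis on the (distinct) projections within each group. That tensor-style argument handles the counterexample above, where the grouping by the second coordinate splits $X$ into two pairs, each separated in the first coordinate.
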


As discussed in \cref{sec:convergence}, we need to upper bound for the plateau. Intuitively, we do not expect this to be small unless the target function $f$ that generates the samples $y$  is parsimonious with the weight $\omega$ and $X$ is sufficiently dense. In the Sobolev case, minimizing $\|\cdot\|_{\Phi,\omega}$ promotes interpolants that are differentiable, so we only expect $E_q(f,f_{\infty})$ to be small if $f$ is also differentiable. The following theorem quantifies this intuition in terms of the mesh norm and separation radius of $X\subset\T^d$, defined respectively as
$$
h_X
:=\max_{y\in \T^d} \min_{x\in X} \|x-y\|_2, \quad 
q_X:= \frac{1}{2} \min_{x,y\in X, x\not=y} \|x-y\|_2.
$$
The proof is rather technical and requires several preparatory lemmas, which can be found in \cref{sec:errorsobolev}.

\begin{theorem}
	\label{thm:errorsobolev}
	Let $\Phi$ be the trigonometric basis on $\T^d$ and $\omega$ be a Sobolev weight with smoothness parameter $s>d/2$. There exist $h>0$ such that for any finite set $X\subset\T^d$ with $h_X\leq h$ and any $f\in C^r(\T^d)$ with integer $r>d/2$, if $f_p$ denotes the minimum weighted norm interpolant of $(X,f|_X)$ for each $p_X\leq p\leq\infty$, then for each $1\leq q\leq\infty$, the error $E_q(f,f_p)$ converges to $E_q(f,f_\infty)$ as $p\to\infty$, and
	\[
	E_q(f,f_\infty)
	\leq E_\infty(f,f_\infty)
	\leq 
	\begin{cases}
	\  C h_X^{s-d/2} \|f\|_{W^s(\T^d)} &\text{if  } \  \frac{d}{2}<s \leq r, \medskip \\
	\ C h_X^{r-d/2} \( \displaystyle \frac{h_X}{q_X}\)^{s-r} \|f\|_{C^r(\T^d)} & \text{if  }  \ \frac{d}{2}<r< s, 
	\end{cases}
	\]
	where $C>0$ only depends on $d,r,s$. 
\end{theorem}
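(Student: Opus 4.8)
The plan is to reduce everything to a bound on the kernel interpolation error $E_\infty(f, f_\infty)$, since by \cref{thm:conv2} (and the finiteness of $\mu = $ Lebesgue on $\T^d$) the convergence $E_q(f,f_p)\to E_q(f,f_\infty)$ for all $1\le q\le\infty$ is automatic, and $E_q(f,f_\infty)\le E_\infty(f,f_\infty)$ because $\mu(\T^d)=1$. By \cref{prop:sob1} and \cref{prop:relation1}, $f_\infty$ is exactly the $K_{\Phi,\omega}$ kernel interpolant of $(X,f|_X)$, where $K:=K_{\Phi,\omega}$ is the (periodized) Matérn/Sobolev kernel whose native space is $W^s(\T^d)$. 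So the whole theorem becomes: bound the sup-norm error of the native-space interpolant of $f$ on a scattered set $X$ with mesh norm $h_X$, in the two regimes $s\le r$ and $r<s$.

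For the first case $d/2 < s \le r$: here $f\in C^r(\T^d)\subseteq W^s(\T^d)$, so $f$ lies in the native space of $K$ and $\|f\|_{\Phi,\omega}=\|f\|_{W^s(\T^d)}<\infty$. I would invoke a standard scattered-data interpolation sampling inequality for native spaces of kernels with finite smoothness on a nice domain — essentially the argument behind Theorem 11.13 in \cite{wendland2004scattered} (or the Narcowich–Ward–Wendland estimates \cite{narcowich2002scattered,narcowich2004scattered}): if $K$ has smoothness of order $2\tau$ with $\tau = s$ then for $h_X$ small enough, $\|f - f_\infty\|_{L^\infty(\T^d)} \le C h_X^{\,s-d/2}\|f\|_{W^s(\T^d)}$. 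The torus setting is in fact cleaner than a general bounded domain because there is no boundary and cube-decomposition / local polynomial reproduction arguments go through verbatim with periodicity; one needs the power function bound $P_X(x)^2 \le C h_X^{\,2s-d}$, which follows from bounding $P_X(x)^2$ by the native-space error of reproducing monomials up to degree $\sim s$ on a ball of radius $\sim h_X$ around $x$ and using the decay $\widehat K(k) \asymp (1+|k|^2)^{-s}$. Then $|f(x)-f_\infty(x)| \le P_X(x)\|f\|_{\Phi,\omega}$ gives the claim.

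For the second case $d/2 < r < s$: now $f\in C^r$ but \emph{not} necessarily in $W^s$, so $f$ is outside the native space of $K$ and the clean estimate above fails — this is the main obstacle. The standard fix is the ``escaping the native space'' technique (Narcowich–Ward–Wendland \cite{narcowich2004scattered}): one interpolates in the larger native space corresponding to smoothness $r$ (i.e., a Sobolev kernel of order $r$), where $f$ does live, but pays a penalty because the interpolant $f_\infty$ is still the order-$s$ kernel interpolant, not the order-$r$ one. The mismatch is controlled by an inverse-type / Bernstein inequality on the finite-dimensional interpolation space, which is where the separation radius $q_X$ enters: a function in the span of $\{K(\cdot,x_k)\}$ that is small in $L^2$ or at the nodes has controlled higher Sobolev norm only up to a factor that is a power of $h_X/q_X$ (the mesh ratio). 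Concretely I would: (i) write $f = f^{(r)}_\infty + (f - f^{(r)}_\infty)$ where $f^{(r)}_\infty$ is the order-$r$ kernel interpolant, which satisfies the clean bound $\|f - f^{(r)}_\infty\|_\infty \le C h_X^{\,r-d/2}\|f\|_{C^r}$; (ii) observe $g := f_\infty - f^{(r)}_\infty$ interpolates the data $f^{(r)}_\infty|_X - f|_X$... wait, actually $f_\infty$ and $f^{(r)}_\infty$ interpolate the \emph{same} data $f|_X$, so $g$ vanishes on $X$; (iii) bound $\|g\|_\infty$ using that $g$ lies in the order-$s$ native space with $\|g\|_{\Phi,\omega}$ controlled (via minimum-norm property and the Bernstein inequality picking up $(h_X/q_X)^{s-r}$) by $h_X^{\,r-d/2}(h_X/q_X)^{?}\|f\|_{C^r}$ — one has to track the exponents carefully so they combine to $h_X^{\,r-d/2}(h_X/q_X)^{s-r}$. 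The required Bernstein inequality on $\T^d$ — relating $\|\cdot\|_{W^s}$ and $\|\cdot\|_{W^r}$ on spaces spanned by kernel translates, with constant a power of the mesh ratio — is the technical heart and is what forces the ``rather technical'' preparatory lemmas of \cref{sec:errorsobolev}; I expect these to be the place where most of the work (and the precise form of $h$ and $C$) lives.
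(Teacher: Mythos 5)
Your reduction of the theorem to a sup-norm bound on $E_\infty(f,f_\infty)$ and your treatment of the first case $d/2<s\le r$ match the paper: there one simply invokes \cref{prop:ksob}, which is the Wendland Theorem 11.11/11.13 machinery (power function bound plus Taylor expansion and H\"older continuity of $K$) adapted to $\T^d$.

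The gap is in your decomposition for the second case $d/2<r<s$. You propose to introduce the order-$r$ kernel interpolant $f_\infty^{(r)}$, write $g := f_\infty - f_\infty^{(r)}$ (which vanishes on $X$), and then bound $\|g\|_\infty$ by asserting that ``$g$ lies in the order-$s$ native space.'' That assertion fails in general. The function $f_\infty^{(r)}$ lies in $\text{span}\{K_r(\cdot,x_j)\}$; its Fourier coefficients decay like $(1+4\pi^2\|k\|_2^2)^{-r}$, so $f_\infty^{(r)}\in W^t(\T^d)$ only for $t<2r-d/2$. Since the theorem must cover all $s>r>d/2$ (e.g.\ $d=1$, $r=1$, $s=3$, where $2r-d/2=3/2<3$), one cannot in general put $f_\infty^{(r)}$, and hence $g$, into $W^s(\T^d)$, and there is nothing for a ``sampling inequality for functions with scattered zeros in $W^s$'' to act on. The minimum-norm property of $f_\infty$ also gives you $\|f_\infty\|_{W^s}\le\|h\|_{W^s}$ only over competitors $h\in W^s$, so it does not let you compare to $f_\infty^{(r)}$ directly.

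What the paper does differently (\cref{lem:functional}--\cref{lem:existence}) is replace your auxiliary $f_\infty^{(r)}$ with a \emph{trigonometric polynomial} $g_m\in P_m(\T^d)$ that interpolates $f$ on $X$ and is a near-best $L^\infty$ approximant, produced by combining the Narcowich--Ward--Wendland functional-analytic existence result (Proposition 3.1 of \cite{narcowich2004scattered}) with a multivariate Jackson theorem (\cite{schultz1969multivariate}). Because $g_m$ is band-limited, it lies in $W^t(\T^d)$ for every $t$, so the RKHS bound from \cref{prop:ksob} applies to $g_m$ unconditionally, and the needed inverse inequality $\|g_m\|_{W^s}\le Cm^{s-r}\|g_m\|_{W^r}$ is the elementary Bernstein inequality for trigonometric polynomials rather than a delicate Bernstein estimate on spans of kernel translates. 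The split $\|f-f_\infty\|_\infty\le\|f-g_m\|_\infty+\|g_m-I_K(X,g_m|_X)\|_\infty$ together with $m\sim 1/q_X$ then yields exactly the exponents $h_X^{r-d/2}(h_X/q_X)^{s-r}$. So you identified the right references and the right theme (escape the native space, pay a mesh-ratio penalty), but the auxiliary function has to be chosen band-limited for the argument to close, and that is precisely what drives the preparatory lemmas you anticipated.
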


This theorem tells us that the error is largely controlled by $h_X$ to the $\min(r,s)-d/2$ power. This agrees with our intuition as more sampling points and highly regular $f$ should lead to small error. The ratio $h_X/q_X\geq 1$ is large if $X$ is irregular and equals one for uniformly spaced sampling points. Usually in practice, we interpret $h_X/q_X$ as a universal constant.

It is important to mention that the second case of the theorem only requires $f\in C^r(\T^d)$, which is significantly weaker than assuming $f\in H_K(\T^d)=W^s(\T^d)$, thereby breaking the reproducing kernel ``barrier" that many standard results such as \eqref{eq:approx} and \eqref{eq:random} suffer from. This has important practical implications because the smoothness $r$ of $f$ is unknown, whereas the regularity $s$ for the kernel is determined by the algorithm. If it happened that $s>r$, then the theorem tells us that kernel interpolation can still give a reasonable error, even though the usual RKHS theory fails.

While the above theorem is similar in flavor to the scattered data approximation error bounds in \cite{wendland2004scattered,narcowich2002approximation,narcowich2004scattered}, its interpretation is different. The scattered data approximation point of view examines the limit $h_X\to 0$, whereby the theorem suggests that it is optimal to pick $s=r$ in order to maximize the decay of $h_X$. In our case, $h_X$ is finite and fixed, so the answer is more complicated. The error bound in \cref{thm:errorsobolev} contains $\|f\|_{W^s}$ which grows in $s$, an implicit constant $C>0$ which presumably also grows in $s$, and $h_X^{s-d/2}$ which decreases in $s$. Thus, it is not clear whether $s=r$ is best.

As seen in \cref{thm:errorsobolev}, interpolation of smooth functions in standard smoothness spaces suffer from the curse of dimensionality. Indeed, if $X$ consists of $n$ approximately uniformly spaced points, then $h_X$ is on the order of $n^{-1/d}$. In high dimensions, this leads to a poor approximation rate. As shown in the supplementary material in \cref{appendixE}, the curse of dimensionality in the approximation rate can be avoided by assuming $f$ belongs to a mixed Sobolev space.

\subsection{Numerical illustration}
\label{sec:trignumerical}

The primary goal of this section is to validate and illustrate our theory for a concrete example. We consider the classical Runge function,
\[
R(x) := \frac{1}{1+100 x^2}, \quad\text{where}\quad |x|\leq \frac{1}{2}. 
\]
To avoid sampling sets with special structures, suppose $X=\{x_k\}_{k=1}^n$ is a set of $n$ points chosen i.i.d. from the uniform measure on $[-1/2,1/2]$. For each realization of $X$ and real parameter $s\geq 0$, we let $f_p$ be the trigonometric polynomial of degree at most $p$ chosen according to the following procedure:
\begin{itemize}
	\item 
	If $p\leq n$, then let $f_p$ minimize the $\ell^2$ sample error; so $f_p$ is the least squares solution. 
	\item
	If $p>n$, then let $f_p$ be the minimum $W^s(\T)$ norm interpolant of $(X,f|_X)$, where the $s=0$ case corresponds to the usual $L^2(\T)$ norm. 
\end{itemize}
Both algorithms are naturally related to each other because the least squares solution uses the pseudo-inverse instead of an inverse to perform the interpolation, see supplementary material in \cref{appendixD}.

\begin{figure}[h]
	\centering
	\hspace{-2em}
	\begin{subfigure}[b]{0.5\textwidth}
		\includegraphics[width=\textwidth]{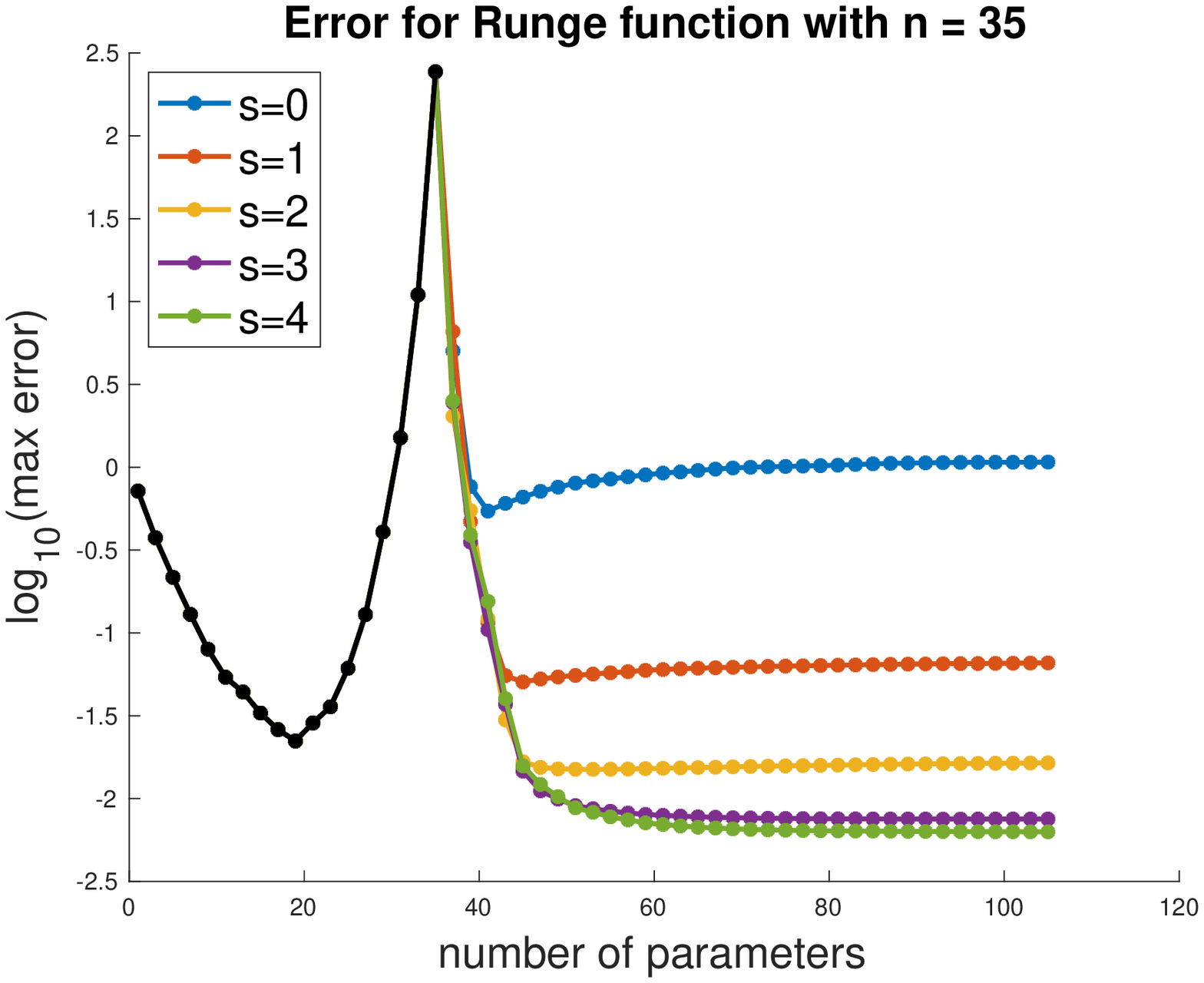}
		\caption{Error $E_\infty(R,f_p)$ with $n=35$}
	\end{subfigure}
	\hspace{-2em}
	\begin{subfigure}[b]{0.5\textwidth}
		\includegraphics[width=\textwidth]{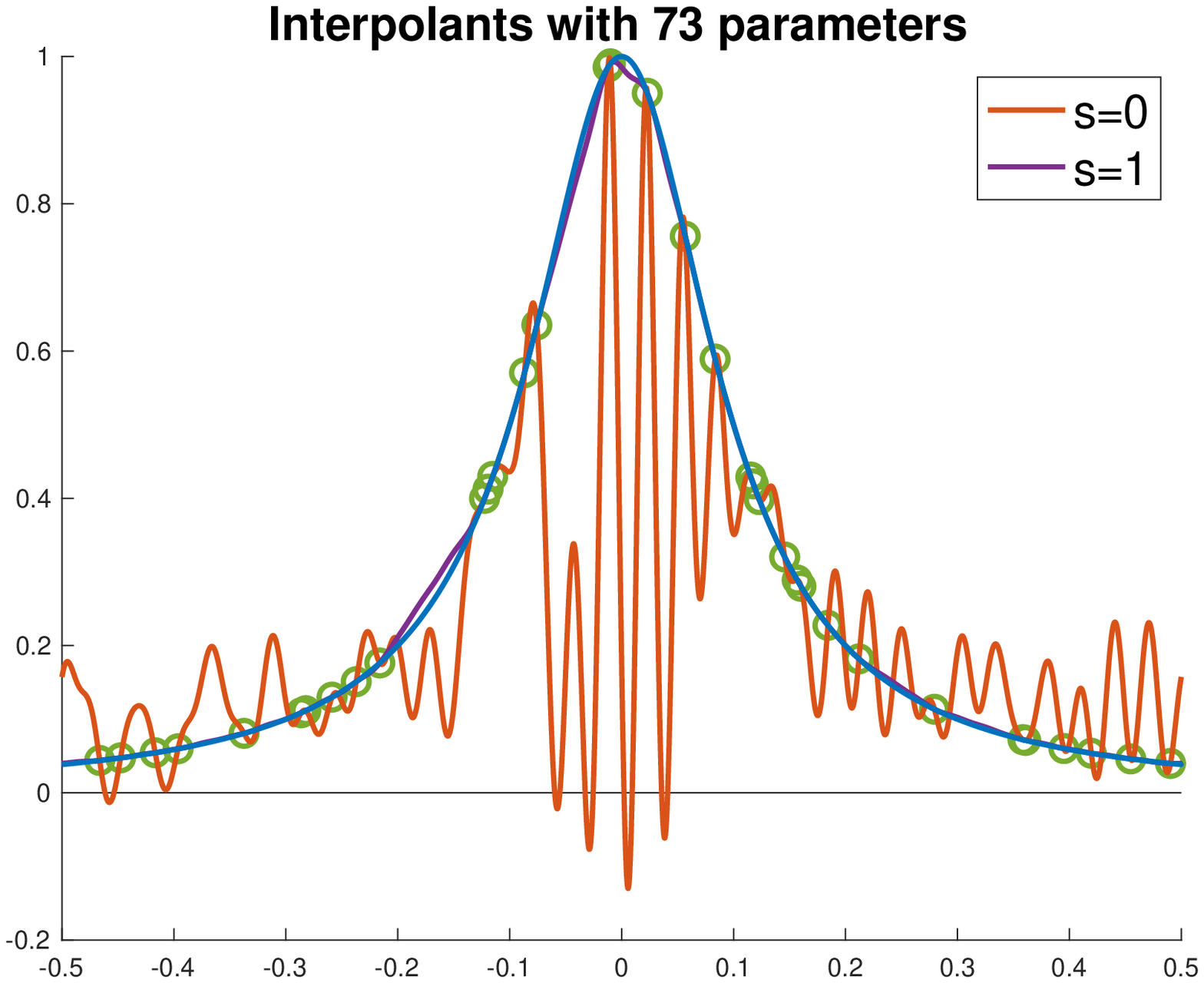}
		\caption{Interpolants}
	\end{subfigure}
	
	\caption{The left figure is a plot of the $L^\infty$ error between the Runge function and the least squares solutions in the under-parameterized regime (black) and the minimum $W^s(\T)$ norm interpolant for $s=0,1,2,3,4$ (blue, red, yellow, purple, green) in the over-parameterized regime as the number of parameters $p$ varies. The results are averaged over 100 random realizations of a sampling set consisting of $n=35$ points drawn independently from the uniform measure on $[-1/2,1/2]$. The right figure displays the $p=73$ minimum $L^2(\T)$ (red) and $W^1(\T)$ (purple) norm interpolants of the Runge function (blue) given 35 randomly selected samples (green). }
	\label{fig:double1}
\end{figure}

The left plot in \cref{fig:double1} shows the error for several choices of smoothness parameters $s$, as a function of $p$. Here, $n=35$ random samples are drawn and the results are averaged over 100 realizations of the sampling set. In the under and exactly parameterized case $p\leq n$, the error follows a classical ``U" shaped curve with minimum attained for some $p<n$. The peak occurs at $p\approx n$ and it has been suggested that this is due to the poor condition number of the interpolation matrix \cite{poggio2019double}. In the over parameterized regime $p>n$, the error behaves differently depending on the choice of weight which is specified by the parameter $s$. We see in this example that the over parameterized minimum norm interpolants for $s=3,4$ perform better than the optimally chosen under parameterized interpolant. 

The right plot in Figure \ref{fig:double1} serves to explain why the minimum (unweighted) $L^2(\T)$ norm interpolator is a poor approximation of the Runge function. Since it is required to interpolate the samples while also having minimal $L^2(\T)$ norm, it oscillates wildly. Compared to the interpolant with minimal $W^1(\T)$ norm, the minimal $L^2(\T)$ norm interpolant has a much larger Sobolev norm due to its wild oscillations. In the supplementary material in \cref{sec:smnumerical}, we provide additional experiments that explore the effects of increasing the number of samples points.

\section{Example: spherical harmonics}

\label{sec:spherical}

\subsection{Generic positive-definite kernels}

In this section, we study the spherical harmonics basis. Let $\mu$ be the (un-normalized) surface measure of the $d-1$ dimensional unit sphere $\S^{d-1}$ embedded in $\R^d$, where $d\geq 3$. We equip $\S^{d-1}$ with the usual metric $\rho(x,y)$ defined as the smallest angle between the vectors $x$ and $y$ on the great circle containing them, so that $\cos(d(x,y))=x\cdot y$. 

A {\it spherical harmonic} of order $\ell\geq 0$ is the restriction to $\S^{d-1}$ of a homogeneous, harmonic polynomial in $\R^d$ of degree $\ell$. The collection of spherical harmonics of order $\ell$ forms a $N_\ell$ dimensional subspace, where
\[
N_0 :=1, \quad
N_\ell := \frac{(2\ell+d-2) (\ell+d-3)!}{\ell! (d-2)!}=\frac{2\ell+d-2}{\ell} {{\ell+d-3}\choose{\ell-1}} \quad \text{for} \quad \ell\geq 1. 
\]
Let $\{Y_{\ell,m}\}_{m=1}^{N_\ell}$ be any choice of a $L^2(\S^{d-1})$ orthonormal basis for the subspace of spherical harmonics of order $\ell$. We consider the orthonormal basis,
\[
\Phi
:=\{Y_{\ell,m}\colon \ell\geq 0, 1\leq m\leq N_\ell\}.
\]
We have chosen to index the basis functions using $(\ell,m)$ instead of $\Z$ since it is more convenient in this context, and we order $(\ell,m)$ by the lexicographic ordering of $\N^2$. Any $f\in L^2(\S^{d-1})$ can be expanded as
\[
f=\sum_{\ell=0}^\infty \sum_{m=1}^{N_\ell} \hat f_{\ell,m} Y_{\ell,m}, \quad \hat f_{\ell,m}:=\int_{\S^{d-1}} f(x) Y_{\ell,m}(x) \ d\mu(x).  
\]

In this section, we consider weights $\omega$ indexed by $(\ell,m)$ that only depend on $\ell$; that is, $\omega_{\ell,m}=\omega_\ell$ for all $1\leq m\leq N_\ell$ and $\ell\geq 0$. For such weights, provided that $K_{\Phi,\omega}$ converges, it is automatically an {\it inner product kernel}, (that is, $K_{\Phi,\omega}(x,y)$ only depends on $x\cdot y$) as a consequence of the addition formula for spherical harmonics, see Theorem 4.11 in \cite{costas2014spherical}. The following proposition gives tight sufficient conditions such that the spherical harmonics $\Phi$ and appropriate $\omega$ satisfy our main assumptions. Its proof can be found in \cref{sec:sphere1}. 

\begin{proposition}
	\label{prop:sphere1}
	Let $\omega:=\{\omega_\ell\}_{\ell=0}^\infty$ and assume there exist $C>0$ and $s>d-1$ such that
	$
	\omega_\ell
	\geq C \ell^{s} $ 
	for all sufficiently large $\ell$. The spherical harmonics $\Phi$ and weight $\omega$ are compatible, and $K_{\Phi,\omega}$ is a strictly positive-definite inner product kernel on $\S^{d-1}$. Any finite $X\subset\S^{d-1}$ is a sampling set for $\Phi$.
\end{proposition}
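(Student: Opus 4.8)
The plan is to verify the three conclusions in turn: compatibility (Definition \ref{def:assump1}), strict positive-definiteness of the inner product kernel, and the sampling property (Definition \ref{def:assump2}). First, for compatibility I would exploit the addition formula for spherical harmonics, which gives $\sum_{m=1}^{N_\ell} Y_{\ell,m}(x) Y_{\ell,m}(y) = \frac{N_\ell}{\omega_{d-1}} P_\ell(x\cdot y)$, where $P_\ell$ is the (normalized) Gegenbauer/Legendre polynomial of degree $\ell$ in dimension $d$, $\omega_{d-1}=\mu(\S^{d-1})$, and $|P_\ell(t)|\le 1$ for $t\in[-1,1]$. Hence the candidate kernel is $K_{\Phi,\omega}(x,y)=\sum_{\ell=0}^\infty \omega_\ell^{-1} \frac{N_\ell}{\omega_{d-1}} P_\ell(x\cdot y)$, and the key estimate is that $N_\ell$ grows polynomially like $\ell^{d-2}$ (from the closed form for $N_\ell$), so $\sum_\ell \omega_\ell^{-1} N_\ell \lesssim \sum_\ell \ell^{-s+d-2}$, which converges precisely because $s>d-1$. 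This bound, being uniform in $x,y$, gives absolute and uniform convergence of the series; continuity and boundedness of $K_{\Phi,\omega}$ follow since each $P_\ell(x\cdot y)$ is continuous and the tail is uniformly small, and membership in $L^2_{\mu\times\mu}$ is immediate from boundedness and finiteness of $\mu(\S^{d-1})$. (One must also note $\omega_\ell\to\infty$, which follows from $\omega_\ell\ge C\ell^s$ for large $\ell$; the finitely many small-$\ell$ terms where the lower bound may fail are harmless since each is a single bounded term.)

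Second, for strict positive-definiteness: $K_{\Phi,\omega}$ is positive-definite because all its Gegenbauer coefficients $\omega_\ell^{-1} N_\ell/\omega_{d-1}$ are strictly positive, so for any finite $X=\{x_j\}$ and $u\in\R^n$ one has $\sum_{j,k} K_{\Phi,\omega}(x_j,x_k) u_j u_k = \sum_\ell \frac{N_\ell}{\omega_\ell \omega_{d-1}} \sum_{m}\big(\sum_j u_j Y_{\ell,m}(x_j)\big)^2 \ge 0$. For the \emph{strict} inequality, this sum vanishes only if $\sum_j u_j Y_{\ell,m}(x_j)=0$ for every $\ell\ge 0$ and every $m$; since the spherical harmonics of all orders span a dense subspace of $C(\S^{d-1})$, the measure $\sum_j u_j \delta_{x_j}$ must then be zero, forcing $u=0$. (Alternatively, one can invoke the classical theorem characterizing strictly positive-definite inner product kernels on the sphere by their having infinitely many positive Gegenbauer coefficients.)

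Third, the sampling property: I would argue exactly as in the trigonometric case (the remark preceding Proposition \ref{prop:sob1}). Given finite $X\subset\S^{d-1}$ with $|X|=n$, strict positive-definiteness of $K:=K_{\Phi,\omega}$ makes the Gram matrix $\bfK=(K(x_j,x_k))$ invertible, so the kernel interpolant $I_K(\cdot)=\sum_k K(\cdot,x_k)(\bfK^{-1}y)_k$ interpolates any data $y$ on $X$; since $K(\cdot,x_k)=\sum_{\ell}\frac{N_\ell}{\omega_\ell\omega_{d-1}}P_\ell(\cdot\,x_k)$ is a uniform limit of finite spherical-harmonic expansions, truncating at a sufficiently high order $p$ and correcting the residual (again solvable because of invertibility of the truncated Gram matrix, once $p$ is large enough that the truncated kernel stays positive-definite on $X$) produces an interpolant in $\text{span}(\Phi_p)$. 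Thus any finite $X$ is a sampling set.

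The main obstacle is the strictness in the second step and the truncation argument in the third: one must be careful that for $p$ large enough the truncated partial-sum kernel $K_p(x,y):=\sum_{\ell\le p}\frac{N_\ell}{\omega_\ell\omega_{d-1}}P_\ell(x\cdot y)$ remains positive-definite \emph{and} nondegenerate on the specific finite set $X$ — this is where uniform convergence $K_p\to K$ is used, since $\|\bfK-\bfK_p\|$ can be made smaller than the least eigenvalue of $\bfK$, keeping $\bfK_p$ invertible. Everything else is routine polynomial growth bookkeeping on $N_\ell$ against the hypothesis $s>d-1$.
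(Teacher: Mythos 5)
Your proposal is correct. For compatibility your argument coincides with the paper's: the addition formula converts the Mercer series into $\frac{1}{\mu(\S^{d-1})}\sum_\ell N_\ell \omega_\ell^{-1} P_\ell(x\cdot y)$, $|P_\ell|\le 1$, $N_\ell\lesssim \ell^{d-2}$, and $\sum_\ell \ell^{d-2-s}<\infty$ precisely because $s>d-1$. One small slip: in the quadratic-form display the weight should be $\omega_\ell^{-1}$ alone, not $\frac{N_\ell}{\omega_\ell\,\mu(\S^{d-1})}$; the addition-formula factor $\frac{N_\ell}{\mu(\S^{d-1})}$ should not appear once the inner sum over $m$ is written out explicitly. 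Harmless, since the coefficient stays positive either way.

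Where you genuinely diverge is in the remaining two claims, and the comparison is instructive. The paper establishes the sampling property by citing Theorem 3.1 of Narcowich--Ward \cite{narcowich2004scattered}, which produces an interpolant of arbitrary data on any finite $X\subset\S^{d-1}$ using spherical harmonics of degree at most $L$ with $L$ bounded explicitly in terms of the minimum separation of $X$; strict positive-definiteness then comes for free from \cref{prop:relation1} (compatibility $+$ universal sampling $\Rightarrow$ strict positive-definiteness). You instead prove strict positive-definiteness directly: vanishing of the quadratic form forces $\sum_j u_j Y_{\ell,m}(x_j)=0$ for every $\ell,m$, density of spherical harmonics in $C(\S^{d-1})$ then forces the measure $\sum_j u_j\delta_{x_j}$ to vanish, hence $u=0$. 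With strict positive-definiteness in hand you obtain the sampling property by truncation: uniform convergence $K_p\to K$ gives $\bfK_p\to\bfK$ in operator norm, so $\bfK_p$ is invertible once $p$ is large, and the truncated kernel interpolant $\sum_k K_p(\cdot,x_k)(\bfK_p^{-1}y)_k$ lies in $\text{span}(\Phi_p)$ and fits the data. Your route is more self-contained---it dispenses with the external interpolation theorem and inverts the logical order (positive-definiteness first, sampling second) without any circularity. What it gives up is the quantitative control on the truncation degree in terms of $q_X$ that the cited theorem supplies, and which the paper re-uses downstream (e.g.\ in \cref{thm:sphere1} and the NTK analysis) to translate the degree needed for interpolation into a mesh-dependent approximation rate.
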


There is a natural definition of a Sobolev space on spheres. Throughout this section, we let 
$
\lambda_\ell:=\ell(\ell+d-2),
$
which is an eigenvalue of the Laplace-Beltrami operator $\Delta$ on the space of $\ell$-th degree spherical harmonics. The $s$-th order Sobolev norm on the sphere is defined as
\[
\|f\|_{W^s(\S^{d-1})}^2
:=\sum_{\ell=0}^\infty (1+\lambda_\ell)^s|\hat f_{\ell,m}|^2
:= \|(I-\Delta)^{s/2}f\|_{L^2(\S^{d-1})}^2.
\]
Thus, if $\omega_\ell\sim (1+\lambda_\ell)^{r}\sim\ell^{2r}$, then the weighted norm $\|\cdot\|_{\Phi,\omega}$ is equivalent to $\|\cdot\|_{W^r(\S^{d-1})}$. Since the Laplace-Beltrami operator is a second order derivative, $C^{2r}(\S^{d-1})$ consists of all $f$ such that $\Delta^{2r}f$ is continuous with usual norm, 
$
\|f\|_{C^{2r}(\S^{d-1})}
:=\|f\|_{C(\S^{d-1})}+\|\Delta^r f\|_{C(\S^{d-1})}. 
$
The mesh norm and separation radius of $X\subset\S^{d-1}$ are defined respectively as
$$
h_X
:=\max_{y\in \S^{d-1}} \min_{x\in X} d(x,y), \quad 
q_X:= \frac{1}{2} \min_{x,y\in X, x\not=y} d(x,y).
$$
We have the following approximation rate for kernel interpolation on the sphere.

\begin{theorem}
	\label{thm:sphere1}
	Let $\Phi$ be the spherical harmonic basis for $\S^{d-1}$ and $\omega:=\{\omega_\ell\}_{\ell=0}^\infty$ such that $\omega_\ell\sim \ell^{2s}$ for some real $s>(d-1)/2$. For all $f\in C^{2r}(\S^{d-1})$ and finite $X\subset\S^{d-1}$, if $f_p$ denotes the minimum weighted norm interpolant of $(X,f|_X)$ for each $p_X\leq p\leq\infty$, then for each $1\leq q\leq\infty$, the error $E_q(f,f_p)$ converges to $E_q(f,f_\infty)$ as $p\to\infty$, and
	\[
	E_\infty(f,f_\infty)
	\leq 
	\begin{cases}
	\ C h_X^{s-(d-1)/2} \|f\|_{W^s(\S^{d-1})}&\text{if }\quad \frac{d-1}{2}<s\leq 2r, \medskip \\
	\ C \(\displaystyle \frac{h_X}{q_X}\)^{s-2r} h_X^{2r-(d-1)/2} \|f\|_{C^{2r}(\S^{d-1})} &\text{if } \quad \frac{d-1}{2}<2r<s, 
	\end{cases}
	\]
	where $C>0$ only depends on $d,r,s$.
\end{theorem}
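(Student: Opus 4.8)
The plan is to reduce Theorem~\ref{thm:sphere1} to the corresponding kernel interpolation estimate on the sphere, in direct analogy with the Fourier case (Theorem~\ref{thm:errorsobolev}). First, by Proposition~\ref{prop:sphere1}, $\Phi$ and $\omega$ are compatible whenever $\omega_\ell \sim \ell^{2s}$ with $s>(d-1)/2$, and every finite $X\subset\S^{d-1}$ is a sampling set; hence Corollary~\ref{thm:conv2} applies and gives the convergence $E_q(f,f_p)\to E_q(f,f_\infty)$ for all $1\le q\le\infty$, using $\mu(\S^{d-1})<\infty$ to cover $q<2$. The monotonicity $E_q(f,f_\infty)\le E_\infty(f,f_\infty)$ for $q\le\infty$ is immediate from $\mu(\S^{d-1})<\infty$ after normalization, or simply from $\|g\|_{L^q_\mu}\le \mu(\Omega)^{1/q}\|g\|_{L^\infty}$. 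So the substance is the $L^\infty$ bound on $E_\infty(f,f_\infty)$, where by Proposition~\ref{prop:relation1}, $f_\infty$ is the $H_{K_{\Phi,\omega}}(\S^{d-1})$ reproducing-kernel interpolant of $(X,f|_X)$.

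For the first case, $\tfrac{d-1}{2}<s\le 2r$, I would invoke a standard native-space scattered-data estimate on the sphere: since $\omega_\ell\sim\ell^{2s}$ means $H_{K_{\Phi,\omega}}(\S^{d-1})$ is norm-equivalent to $W^s(\S^{d-1})$, and $f\in C^{2r}\subset W^s$ because $2r\ge s$, the sampling inequality / power-function bound for interpolation in Sobolev spaces on the sphere (see \cite{narcowich2002scattered,narcowich2004scattered} and the general framework of \cite{wendland2004scattered}) yields $E_\infty(f,f_\infty)\le C h_X^{s-(d-1)/2}\|f\|_{W^s(\S^{d-1})}$ once $h_X$ is small; the exponent $s-(d-1)/2$ is exactly the Sobolev embedding gap that makes $W^s(\S^{d-1})\hookrightarrow C^0$. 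I would either cite this directly or reconstruct it from a Bernstein-type inequality for spherical polynomials plus a doubling/Marcinkiewicz–Zygmund argument on $X$.

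For the second and harder case, $\tfrac{d-1}{2}<2r<s$, the target $f\in C^{2r}(\S^{d-1})$ need not lie in the native space $W^s$, so the RKHS estimate fails and one must ``escape the native space.'' The standard remedy (Narcowich–Ward–Wendland escaping-the-native-space results, adapted to the sphere) is to interpolate $f$ first in the smaller-smoothness native space $W^{2r}$-like space—or directly bound the interpolation error of a $C^{2r}$ function—and pay a penalty factor controlled by the mesh ratio $h_X/q_X$ raised to the smoothness gap $s-2r$. Concretely, I would approximate $f$ by a spherical polynomial $P$ of a degree chosen proportional to $1/h_X$ using Jackson-type estimates on the sphere (giving $\|f-P\|_{C^0}\lesssim h_X^{2r}\|f\|_{C^{2r}}$ and $\|P\|_{W^s}\lesssim h_X^{-(s-2r)}\|f\|_{C^{2r}}$ via a Bernstein inequality), then write $f_\infty$ as the kernel interpolant and split $f-f_\infty = (f-P) - (f_\infty - I_K(X,P|_X)) - (I_K(X,P|_X) - P)$, where $I_K(X,P|_X)-P$ is the kernel interpolation error of the native-space function $P$, bounded by $C h_X^{s-(d-1)/2}\|P\|_{W^s}$, and the middle term is the kernel interpolant of the sampled residual $(f-P)|_X$, bounded in $L^\infty$ by a stability/Lebesgue-constant factor times $\|f-P\|_{C^0}$; the Lebesgue constant of kernel interpolation is where $h_X/q_X$ enters. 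Collecting the exponents gives $h_X^{2r-(d-1)/2}(h_X/q_X)^{s-2r}$.

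The main obstacle I anticipate is controlling the middle term — bounding the $L^\infty$ norm of the kernel interpolant of arbitrary sampled data in terms of the sup-norm of that data with a constant depending only on $h_X/q_X$. This requires a uniform bound on the Lebesgue constant of $W^s$-kernel interpolation on quasi-uniform point sets on $\S^{d-1}$, which in turn rests on localized estimates for the kernel $K_{\Phi,\omega}$ (algebraic decay of $K$ and its derivatives away from the diagonal) together with the separation assumption. These are precisely the spherical analogues of the Fourier-side lemmas collected in \cref{sec:errorsobolev}, so I would develop them in a parallel appendix section (\cref{sec:sphere1} and its supporting lemmas), using the Gegenbauer/Jacobi expansion of inner-product kernels, the Funk–Hecke formula, and known localization bounds for zonal kernels with algebraically decaying Fourier–Legendre coefficients; the rest of the argument is then a routine triangle-inequality bookkeeping of the three exponents.
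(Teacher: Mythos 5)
Your reduction coincides with what the paper actually does: the paper gives no self-contained proof of \cref{thm:sphere1}, but derives the convergence statement from \cref{prop:sphere1} together with \cref{thm:conv2} (exactly as you propose, with $\mu(\S^{d-1})<\infty$ covering $q<2$), and then quotes the two $L^\infty$ bounds from the literature --- the case $s\le 2r$ from Proposition 2.1 and the case $2r<s$ from Theorem 3.2 of \cite{narcowich2002scattered}. So at the level you ultimately fall back on (citing the spherical native-space estimate and the ``escaping the native space'' theorem), your proposal is the paper's argument. Where you diverge is in your sketched reconstruction of the second case. The cited result, and the paper's own analogous proofs of \cref{thm:errorsobolev} and \cref{thm:errorntk}, never invoke a Lebesgue-constant or stability bound: they construct a spherical polynomial $g$ of degree $L\gtrsim 1/q_X$ that \emph{interpolates} $f$ on $X$ while being a near-best uniform approximant (a norming-set/functional-analytic argument, the spherical analogue of \cref{lem:existence}), so that $f_\infty=I_K(X,g|_X)$, your ``middle term'' vanishes identically, and the mesh ratio enters only through the forced degree $L\sim 1/q_X$ in the Bernstein step. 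Your variant --- take a non-interpolating approximant $P$ of degree $\sim 1/h_X$ and control $I_K(X,(f-P)|_X)$ by a Lebesgue constant --- is a genuinely different route, and the one step you defer is exactly where it can break: a mesh-ratio bound on the Lebesgue constant of Sobolev-kernel interpolation on $\S^{d-1}$ is a nontrivial result in its own right, and with the available bound of order $(h_X/q_X)^{s-(d-1)/2}$ your bookkeeping yields a contribution $(h_X/q_X)^{s-(d-1)/2}h_X^{2r}$, which dominates and does not reduce to the stated $(h_X/q_X)^{s-2r}h_X^{2r-(d-1)/2}$ unless $h_X^{2r}\lesssim q_X^{2r-(d-1)/2}$, i.e.\ essentially quasi-uniformity --- whereas \cref{thm:sphere1} is asserted for arbitrary finite $X$. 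So ``collecting the exponents'' is not automatic on your route. If you intend a self-contained proof, use the interpolating-near-best-approximant decomposition; if you only intend to cite \cite{narcowich2002scattered}, you already match the paper.
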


We omit the theorem's proof since it is a combination of \cref{thm:conv2} and known results. The convergence of $E_q(f,f_p)$ to $E_q(f,f_\infty)$ follows from \cref{thm:conv2}, and \cref{prop:ntk1}. As for the upper bounds on $E_\infty(f,f_\infty)$, the first case $s\leq 2r$ is the typical estimate derived from RKHS theory where the function belongs to the RKHS due to $f\in C^{2r}(\S^{d-1})\subset W^{s}(\S^{d-1})$, see Proposition 2.1 in \cite{narcowich2002scattered}. The second case $2r<s$ is Theorem 3.2 in \cite{narcowich2002scattered}. Similar to \cref{thm:errorsobolev}, the ratio $h_X/q_X$ quantifies how irregularly spaced the sampling set is.

\subsection{Neural tangent kernels}

It was recently discovered in \cite{jacot2018neural} that for appropriately normalized, initialized, and over-parameterized fully connected neural networks with ReLU activation, under the infinite width limit, the function represented by the neural network can be described in terms the so-called {\it neural tangent kernel} (NTK). It is a positive-definite kernel on $\S^{d-1}$. A remarkable property is that the training process for a short duration can be described in terms of this kernel. Consequently, there has been significant interest in studying the neural tangent kernel in order to gain theoretical insights into the performance of neural networks. 

An explicit formula for the NTK corresponding to two layer neural networks with ReLU activation was derived in \cite{du2018gradient,chizat2019lazy}. For our purposes, we forego its formula and instead work with its Mercer decomposition, which was derived in \cite{bietti2019inductive}. There, it was shown that for some $C_d>0$ depending only on $d$, the NTK denoted $K_{\tan}(x,y)$ has the absolutely and uniformly convergent series expansion
\begin{equation}
\label{eq:nkt}
K_{\tan}(x,y)
=\sum_{\ell=0,1} \sigma_\ell^{-1} \sum_{m=1}^{N_\ell} Y_{\ell,m}(x)Y_{\ell,m}(y) + \sum_{\text{even } \ell\geq 2 } \sigma_\ell^{-1} \sum_{m=1}^{N_\ell} Y_{\ell,m}(x)Y_{\ell,m}(y), 
\end{equation}
where $\sigma_0,\sigma_1>0$ and $\sigma_\ell \sim C_d\ell^{d}$ for all $\ell\geq 2$ and even. We refer to any $\sigma$ satisfying these conditions as a NTK weight. In view of this result, we define the NTK spherical harmonics $\Phi:=\Phi_{\tan}$ as the collection of $Y_{\ell,m}$ such that $\ell=0,1$ or $\ell\geq 2$ and even, for $1\leq m\leq N_\ell$. In our notation, $K_{\tan}=K_{\Phi,\sigma}$. The RKHS associated with the NTK is
\[
H_{K_{\tan}}(\S^{d-1})
=\Big\{f \in \text{span}\{\Phi_{\tan} \} \colon \|f\|_{H_{K_{\tan}}(\S^{d-1})}^2:= \sum_{\ell\colon \sigma_\ell>0} \sum_{m=1}^{N_\ell} \sigma_\ell |\hat f_{\ell,m}|^2<\infty  \Big\}.
\]
For fixed dimension, $\lambda_\ell=\ell(\ell+d-2)\sim \ell^2$, so $H_{K_{\tan}}(\S^{d-1})$ is equivalent to $W^{d/2}(\S^{d-1})$. By the Sobolev embedding theorem, functions in the RKHS associated with the NTK are necessarily H\"older continuous of order $d/2-(d-1)/2=1/2$.

Our first observation is that while $K_{\tan}$ is positive-definite, it is not strictly positive-definite on the unit sphere. In a nutshell, the reason is that $\lambda_\ell=0$ for all odd $\ell\geq 2$ and $Y_{\ell,m}$ is even (resp. odd) whenever $\ell$ is even (resp. odd), so $H_{K_{\tan}}(\S^{d-1})$ does not contain many odd functions. This is proved rigorously in the supplementary material in \cref{appendixF}. From an interpolation perspective, this is a negative result because there exist data points $(X,y)$ that cannot be interpolated using the NTK. However, there is a simple way to avoid these issues. We say $X\subset\S^{d-1}$ has at most $k$ {\it symmetric points} if there exist at most $k$ distinct pairs $(x,-x)$ such that $(x,-x)\in X\times X$. The following is proved in \cref{sec:ntk2}.

\begin{proposition}
	\label{prop:ntk2}
	Let $\Phi$ be the NTK spherical harmonics and $\omega$ be any NTK weight. Then $\Phi$ and $\omega$ are compatible. Any finite $X\subset\S^{d-1}$ with at most $d$ symmetric points is a sampling set for $\Phi$.
\end{proposition}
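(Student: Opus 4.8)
The plan is to verify the two claims separately. Compatibility of $\Phi$ and $\omega$ follows immediately from the Mercer decomposition \eqref{eq:nkt}: the series defining $K_{\tan}=K_{\Phi,\sigma}$ is stated there to converge absolutely and uniformly, $\sigma_\ell>0$ on the indices defining $\Phi_{\tan}$, and $\sigma_\ell\sim C_d\ell^d$ grows without bound, so $\omega=\sigma$ is a positive sequence diverging to infinity and $K_{\Phi,\omega}$ is bounded and continuous; hence the conditions of \cref{def:assump1} hold. (The growth rate $s=d>(d-1)/2$ also puts us comfortably inside the regime of \cref{prop:sphere1}, though we cannot invoke that proposition directly since it concerns the full spherical-harmonic basis, not the even-plus-low-order subcollection.)

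The substantive part is showing that any finite $X=\{x_1,\dots,x_n\}\subset\S^{d-1}$ with at most $d$ symmetric pairs is a sampling set for $\Phi_{\tan}$, i.e. that for some finite $p$ the evaluation map $g\mapsto (g(x_1),\dots,g(x_n))$ from $\text{span}\{\varphi_1,\dots,\varphi_p\}$ onto $\R^n$ is surjective. First I would reduce to a linear-algebra statement: it suffices to exhibit, for each $x_j$, a function $\psi_j\in\text{span}(\Phi_{\tan})$ with $\psi_j(x_i)=\delta_{ij}$, since then finitely many such $\psi_j$ span a subspace (contained in some $\text{span}(\Phi_{\tan,p})$ for $p$ large) that interpolates arbitrary data. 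Equivalently, it suffices to show the only $u\in\R^n$ with $\sum_j u_j \varphi(x_j)=0$ for every $\varphi\in\Phi_{\tan}$ is $u=0$; here $\varphi$ ranges over $Y_{\ell,m}$ with $\ell=0,1$ or $\ell\geq2$ even. Decompose such a vanishing relation into its effect on even and odd spherical harmonics. Testing against all even $Y_{\ell,m}$ ($\ell=0$ and all even $\ell\geq2$) and using the addition formula, $\sum_j u_j\, G_{\mathrm{even}}(x_i,x_j)=0$ where $G_{\mathrm{even}}$ is a strictly-positive-definite-on-$\{\pm x\}$-quotient kernel — more precisely, pairing with the even part kills antipodal cancellation, so grouping $x_j$ and $-x_j$ together (when both lie in $X$) one finds that the sums $u_j+u_{j'}$ over each symmetric pair, and $u_j$ over each non-symmetric point, must all vanish. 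Testing against the degree-one harmonics $Y_{1,m}$, whose span is the set of linear functions $x\mapsto a\cdot x$, gives $\sum_j u_j x_j=0$ in $\R^d$, i.e. $d$ further linear constraints. For the at-most-$d$ symmetric pairs, the even test already forces $u_j=-u_{j'}$, so within each such pair we have one free sign-parameter; the $d$ linear constraints $\sum_j u_j x_j=0$ then pin these down provided the relevant points are in general position, but since we need this for \emph{every} admissible $X$, I would instead argue that the antipodal points $\{x_j,-x_j\}$ contribute $u_j(x_j-x_{j'}) = 2u_j x_j$ to the linear sum, and at most $d$ such vectors together with the (already-zero) non-symmetric contributions cannot force a nonzero combination only if... — this is exactly where care is needed.

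The main obstacle, and the place the hypothesis "at most $d$ symmetric points" is used, is handling the odd part of the interpolation constraints: the odd spherical harmonics available in $\Phi_{\tan}$ are \emph{only} the degree-one ones ($d$-dimensional), so the odd component of the data can be matched at no more than $d$ independent "antipodally-obstructed" directions. The clean way to organize this is: (i) the even harmonics, being a strictly positive definite system modulo the antipodal identification $x\sim -x$ (this needs a short lemma, essentially that $\sum_{\ell \text{ even}}\sigma_\ell^{-1}\sum_m Y_{\ell,m}(x)Y_{\ell,m}(y)$ is strictly positive definite on any set with distinct images in $\S^{d-1}/\{\pm1\}$, which follows because the even Gegenbauer expansion has infinitely many positive coefficients), show that any interpolation problem whose data is \emph{consistent under the identification} — equal values forced on $x$ and $-x$ when both are in $X$, but here data is arbitrary so there is no such forcing; rather one splits $y$ into $y^{\mathrm{even}}_j=(y_j+y_{j'})/2$ on symmetric pairs — is solvable with even harmonics; (ii) the residual odd data, supported on the $\le d$ symmetric pairs, must be matched using degree-one harmonics alone, and $\{x_j : (x_j,-x_j)\in X\times X\}$ consists of at most $d$ points spanning a subspace of $\R^d$, on which the linear functionals $a\mapsto a\cdot x_j$ are surjective onto the required values precisely because a set of at most $d$ vectors — even if linearly dependent — the odd-data is automatically in the range once one notes that two antipodal points $x,-x$ impose the \emph{same} linear constraint up to sign, so there are effectively at most $d$ scalar unknowns and $d$ equations, solvable whenever the points are distinct (which they are). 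Assembling (i) and (ii) and adding the two interpolants gives a function in $\text{span}(\Phi_{\tan})$, hence in $\text{span}(\Phi_{\tan,p})$ for $p$ large enough, interpolating $(X,y)$; thus $X$ is a sampling set and $p_X<\infty$. I would write the strict-positive-definiteness-modulo-antipodes lemma first, as a standalone step, since everything else is bookkeeping once it is in hand.
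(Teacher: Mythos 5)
Your compatibility argument is fine and matches the paper's, which transfers the argument of \cref{prop:sphere1} using $\sigma_\ell\sim\ell^d$ for even $\ell$. Your sampling-set argument also arrives at the paper's decomposition --- match the odd part of the data on the symmetric pairs with a degree-one harmonic, then match the residual even part with even-degree harmonics --- and this is the right skeleton.

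But there is a genuine gap in your step (ii), exactly where you flagged ``this is where care is needed.'' You claim the odd-data system $a\cdot z_j = w_j$, $j=1,\dots,k\leq d$, is ``solvable whenever the points are distinct'' and that ``even if linearly dependent --- the odd-data is automatically in the range.'' Neither is correct: solvability for arbitrary $w\in\R^k$ requires $z_1,\dots,z_k$ to be linearly \emph{independent}, and distinct, pairwise non-antipodal unit vectors need not be independent once $k\geq 3$. In $\S^2$ take $z_1=(1,0,0)$, $z_2=(0,1,0)$, $z_3=(1/\sqrt2,1/\sqrt2,0)$: for the odd data $w=(1,1,0)$ there is no $a\in\R^3$ with $a\cdot z_1=a\cdot z_2=1$ and $a\cdot z_3=0$, since $z_3=(z_1+z_2)/\sqrt2$ forces $a\cdot z_3=\sqrt2$. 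Thus the odd part cannot be matched by anything in $\text{span}\{Y_{1,m}\}$, and since every element of $\text{span}(\Phi_{\tan})$ has degree-one component as its only odd part, $X=\{\pm z_1,\pm z_2,\pm z_3\}$ (which has exactly $d=3$ symmetric pairs) is not a sampling set for $\Phi_{\tan}$. You should know that the paper's own proof makes the identical unsupported jump (``Since $d\geq k$ ... there exists $f\in\text{span}\{Y_{1,m}\}$ such that $f=w$ on $Z$''), so the gap is inherited; to close it one must strengthen ``at most $d$ symmetric points'' to linear independence of the representatives $z_1,\dots,z_k$ (or restrict to $k\leq 2$, which forces independence).
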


We end this subsection with a theorem regarding the approximation quality of the NTK interpolant of a function. The theorem is proved in \cref{sec:errorntk}.

\begin{theorem}
	\label{thm:errorntk}
	Let $\Phi$ be the NTK spherical harmonics and $\omega$ be any NTK weight. For all $f\in C^{2r}(\S^{d-1})$ with $f\in \text{span}(\Phi)$ and any finite set $X\subset\S^{d-1}$ with at most $d$ symmetric points, if $f_p$ denotes the minimum weighted norm interpolant of $(X,f|_X)$ for each $p_X\leq p\leq\infty$, then for each $1\leq q\leq \infty$, the error $E_q(f,f_p)$ converges to $E_q(f,f_\infty)$ as $p\to\infty$, and 
	\[
	E_\infty(f,f_\infty)
	\leq 
	\begin{cases}
	\ C h_X^{1/2} \|f\|_{W^{d/2}(\S^{d-1})}&\text{if  }\quad  \frac{d}{2}\leq 2r, \medskip \\
	\ C \( \displaystyle \frac{h_X}{q_X}\)^{d/2-2r} h_X^{2r-(d-1)/2} \|f\|_{C^{2r}(\S^{d-1})} &\text{if } \quad \frac{d-1}{2}<2r<\frac{d}{2}. 
	\end{cases}
	\]
	where $C>0$ only depends on $d,r$. 
\end{theorem}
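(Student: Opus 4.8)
The plan is to derive \cref{thm:errorntk} as a corollary of \cref{thm:conv2}, \cref{prop:ntk2}, and the scattered data error estimates of Narcowich--Ward, exactly in parallel with how \cref{thm:sphere1} was obtained from \cref{thm:conv2} and \cite{narcowich2002scattered}; the only genuinely new ingredient is handling the fact that the NTK omits all odd spherical harmonics of order $\geq 3$, so the relevant RKHS is $W^{d/2}(\S^{d-1})$ only \emph{on the subspace spanned by $\Phi_{\tan}$}. First I would invoke \cref{prop:ntk2} to confirm that, since $X$ has at most $d$ symmetric points, $X$ is a sampling set for $\Phi=\Phi_{\tan}$ and $K_{\tan}=K_{\Phi,\omega}$ is a reproducing kernel on this subspace; then \cref{prop:relation1} (applied with the base space taken to be $\overline{\operatorname{span}}(\Phi_{\tan})$ rather than all of $L^2(\S^{d-1})$) identifies $f_\infty$ with the $K_{\tan}$-kernel interpolant of $(X,f|_X)$. \cref{thm:conv2} then gives $E_q(f,f_p)\to E_q(f,f_\infty)$ for every $1\leq q\leq\infty$ (the case $q<2$ is covered because $\mu(\S^{d-1})<\infty$), which disposes of the convergence claim.

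Next I would establish the two upper bounds on $E_\infty(f,f_\infty)$. Since $\sigma_\ell\sim C_d\ell^d$ on the support of $\Phi_{\tan}$ and $\lambda_\ell\sim\ell^2$, the norm $\|\cdot\|_{\Phi,\omega}$ is equivalent to $\|\cdot\|_{W^{d/2}(\S^{d-1})}$ restricted to $\operatorname{span}(\Phi_{\tan})$; this is the $s=d/2$ specialization of the situation in \cref{thm:sphere1}. In the regime $d/2\leq 2r$ we have $f\in C^{2r}(\S^{d-1})\cap\operatorname{span}(\Phi_{\tan})\subset H_{K_{\tan}}(\S^{d-1})$, so the classical RKHS scattered-data bound (Proposition~2.1 in \cite{narcowich2002scattered}) gives $E_\infty(f,f_\infty)\leq C h_X^{d/2-(d-1)/2}\|f\|_{W^{d/2}(\S^{d-1})} = C h_X^{1/2}\|f\|_{W^{d/2}(\S^{d-1})}$. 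In the regime $(d-1)/2<2r<d/2$, the target $f$ lies in $C^{2r}$ but \emph{not} necessarily in the RKHS, so I would apply the Narcowich--Ward ``escaping the native space'' estimate (Theorem~3.2 in \cite{narcowich2002scattered}), which for a kernel whose native space is $W^{d/2}$ and a target in $W^{2r+(d-1)/2}\supset C^{2r}$ (via the appropriate embedding) yields $E_\infty(f,f_\infty)\leq C(h_X/q_X)^{d/2-2r}h_X^{2r-(d-1)/2}\|f\|_{C^{2r}(\S^{d-1})}$; this is precisely the $s=d/2$ case of the second branch of \cref{thm:sphere1}, and the hypothesis $f\in\operatorname{span}(\Phi_{\tan})$ guarantees the restriction of the kernel to that subspace is still a native-space-generating kernel so the Narcowich--Ward machinery applies verbatim.

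The one subtlety I would be careful about — and what I expect to be the main obstacle — is that \cite{narcowich2002scattered} proves its bounds for genuinely strictly positive-definite kernels on all of $\S^{d-1}$, whereas $K_{\tan}$ is only strictly positive-definite on $\operatorname{span}(\Phi_{\tan})$, i.e. on functions with no odd components of order $\geq 3$. I would resolve this by observing that once the data $(X,f|_X)$ come from $f\in\operatorname{span}(\Phi_{\tan})$ and $X$ has at most $d$ symmetric points, the entire problem lives inside the closed subspace $\mathcal{H}:=\overline{\operatorname{span}}(\Phi_{\tan})$, on which $K_{\tan}$ is a bona fide strictly positive-definite kernel with native space $\mathcal{H}\cap W^{d/2}(\S^{d-1})$; the Narcowich--Ward sampling inequalities are local/geometric in $X$ and depend on the kernel only through the decay of its Fourier–Legendre coefficients, all of which are unchanged by passing to $\mathcal{H}$. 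Hence their proofs carry over. The remaining steps — tracking the implicit constant's dependence on $d,r$ only, and noting $E_q(f,f_\infty)\leq E_\infty(f,f_\infty)$ for $q\leq\infty$ since $\mu$ is finite — are routine. I would then simply cite \cref{thm:conv2}, \cref{prop:ntk2}, and \cite{narcowich2002scattered} and omit the computational details, exactly as was done for \cref{thm:sphere1}.
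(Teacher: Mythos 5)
Your overall route is the paper's route: convergence of $E_q(f,f_p)$ to $E_q(f,f_\infty)$ from \cref{prop:ntk2} together with \cref{thm:conv2}, the case $d/2\leq 2r$ from the standard RKHS estimate (Proposition 2.1 of \cite{narcowich2002scattered}) applied to the native space $W^{d/2}(\S^{d-1})\cap\operatorname{span}(\Phi_{\tan})$, and the case $(d-1)/2<2r<d/2$ from the Narcowich--Ward ``escaping the native space'' strategy. The one substantive soft spot is your claim that Theorem 3.2 of \cite{narcowich2002scattered} applies \emph{verbatim} because the Fourier--Legendre coefficients of $K_{\tan}$ are ``unchanged by passing to'' the subspace: they are not unchanged --- they vanish for every odd $\ell\geq 3$ --- and that is exactly what forces a modification rather than a citation. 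The escaping-the-native-space argument hinges on producing a band-limited $g$ that agrees with $f$ on $X$ and is a near-best uniform approximant of $f$; for the NTK this $g$ must be taken inside $Y_L:=\text{span}\{Y_{\ell,m}\colon \ell=0,1 \text{ or even } \ell\leq 2L\}$, since otherwise $g$ is not in the native space and the RKHS bound cannot be applied to it. Constructing such a $g$ is a (slight) modification of Theorem 3.1 of \cite{narcowich2002scattered}, and it is precisely where the at-most-$d$-symmetric-points hypothesis enters (as in the proof of \cref{prop:ntk2}) and where $f\in\operatorname{span}(\Phi_{\tan})$ is used for the Jackson-type bound $\inf_{h\in Y_L}\|f-h\|_{L^\infty(\S^{d-1})}\leq CL^{-2r}\|\Delta^r f\|_{L^\infty(\S^{d-1})}$. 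With that construction in hand, the paper finishes exactly as you would expect: triangle inequality, the kernel estimate $\|g-f_\infty\|_{L^\infty(\S^{d-1})}\leq Ch_X^{1/2}L^{d/2-2r}\|f\|_{C^{2r}(\S^{d-1})}$, and the choice $L\sim 1/q_X$ to produce the factor $(h_X/q_X)^{d/2-2r}h_X^{2r-(d-1)/2}$.

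Two smaller points. Your parenthetical embedding $C^{2r}(\S^{d-1})\subset W^{2r+(d-1)/2}(\S^{d-1})$ is false (it would require $(d-1)/2$ extra derivatives); it is also unnecessary, since the estimate you want is stated for $C^{2r}$ targets directly. And the invocation of \cref{prop:relation1} needs the same caveat you already make elsewhere: it is applied with the ambient space replaced by the closed span of $\Phi_{\tan}$, on which $K_{\tan}$ is strictly positive-definite for sets $X$ with at most $d$ symmetric points; on all of $\S^{d-1}$ it is not (\cref{prop:ntk1}).
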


This result appears rather disappointing as it shows that interpolation with the NTK could potentially suffer from a slow rate of approximation. This is perhaps because the NTK studied here corresponds to a two-layer network and it has been shown that deeper networks enjoy superior approximation qualities \cite{mhaskar2016deep}. We emphasize that the theorem is an upper bound on the error so it could be possible that the true rate is better than this, and that the theorem is for the worst case $f\in C^{2r}(\S^{d-1})$.

\appendix

\section{Proofs for \cref{sec:convergence}}

\label{appendixA}

\subsection{Preparation}

\label{appendixA1}

When $\Phi$ and $\omega$ are fixed, to simplify the presentation, let $K:=K_{\Phi,\omega}$ and  $K_{p}(x,y):=\sum_{k=1}^p \omega_k^{-1} \varphi_k(x)\varphi_k(y)$ be its truncation. Fix any finite $X=\{x_j\}_{j=1}^n\subset\Omega$ and $p\geq 1$, let $\bfK_{p}$ and $\bfK$ be square symmetric matrices containing the values of $K_{p}$ and $K$ evaluated on $X\times X$, respectively. Let $\bfA_p$ denote the $p\times n$ matrix where $(\bfA_p)_{j,k}=\varphi_j(x_k)$ and $\bfW$ denote the $n\times n$ diagonal matrix $\bfW_{j,j}=\omega_j^{-1}$. A direct calculation shows that $\bfK_p=\bfA_p^t\bfW \bfA_p$. We also define the quantities 
\begin{equation*}
\label{eq:alpha}
C_K:=\sup_{x\in\Omega} K(x,x) \quad 
\alpha_{p}
:= \sup_{x,y\in\Omega} \big|K(x,y)-K_{p}(x,y)\big|. 
\end{equation*}
Note that $\alpha_p$ converges to zero when $\Phi$ and $\omega$ are compatible. 

\begin{lemma}
	\label{lem:prep1}
	Assume $\Phi$ and $\omega$ are compatible and $X$ is a sampling set for $\Phi$. For each $p\geq p_X$, $\bfA_p$ is injective and $\bfK_p$ is invertible. 
\end{lemma}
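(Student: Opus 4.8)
The plan is to reformulate the sampling condition in linear-algebraic terms and then exploit positivity of the weights. First, observe that the defining property of a sampling set translates directly into a rank condition on $\bfA_p$: a function $f=\sum_{k=1}^p c_k\varphi_k$ interpolates data $y$ on $X=\{x_j\}_{j=1}^n$ precisely when $\bfA_p^t c = y$, so $X$ admits an interpolant from $\mathrm{span}\{\varphi_k\}_{k=1}^p$ for every $y\in\R^n$ if and only if $\bfA_p^t\colon \R^p\to\R^n$ is surjective, equivalently $\operatorname{rank}(\bfA_p)=n$, equivalently $\bfA_p\colon\R^n\to\R^p$ is injective. Since $p_X$ is, by definition, a value of $p$ for which this property holds, $\bfA_{p_X}$ is injective.

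Next I would upgrade injectivity from $p=p_X$ to all $p\geq p_X$. The matrix $\bfA_p$ is obtained from $\bfA_{p_X}$ by appending the rows $(\varphi_j(x_1),\dots,\varphi_j(x_n))$ for $p_X< j\leq p$, and appending rows cannot decrease the rank, so $\operatorname{rank}(\bfA_p)\geq\operatorname{rank}(\bfA_{p_X})=n$; as $\bfA_p$ has only $n$ columns, $\operatorname{rank}(\bfA_p)=n$, i.e., $\bfA_p$ is injective.

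Finally, to deduce invertibility of $\bfK_p$, I would use the factorization $\bfK_p=\bfA_p^t\bfW\bfA_p$ recorded above, where $\bfW$ is diagonal with strictly positive entries $\omega_k^{-1}$ and hence positive definite. For any $v\in\R^n$ we get $v^t\bfK_p v=(\bfA_p v)^t\bfW(\bfA_p v)\geq 0$, with equality forcing $\bfA_p v=0$ and therefore $v=0$ by the injectivity just established. Thus $\bfK_p$ is a symmetric positive-definite $n\times n$ matrix, so it is invertible. There is no serious obstacle here; the only point requiring care is the bookkeeping of which maps act between which spaces, so that the sampling property is correctly identified with injectivity of $\bfA_p$ (equivalently, surjectivity of $\bfA_p^t$) rather than the other way around.
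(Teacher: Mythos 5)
Your proposal is correct and follows essentially the same route as the paper: translate the sampling condition into surjectivity of $\bfA_p^t$ (hence injectivity of $\bfA_p$), then use the factorization $\bfK_p=\bfA_p^t\bfW\bfA_p$ with $\bfW$ positive diagonal to conclude $\bfK_p$ is positive definite and invertible. The only cosmetic difference is that you pass from $p_X$ to general $p\geq p_X$ via a rank-monotonicity argument on appended rows, whereas the paper simply notes the sampling property is inherited by the larger span; both are immediate.
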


\begin{proof}
	Due to the assumption that $X$ is a sampling set for $\Phi$ and $p\geq p_X$, for each $u\in\R^n$, let $f\in \text{span}\{\varphi_k\}_{k=1}^p$ such that $f(x_k)=u_k$ for each $1\leq k\leq n$. Letting $\hat f\in\R^p$ be the coefficients of $f$, since $f$ interpolates $u$, we have $\bfA_p^t \hat f = u$. Since $u$ is arbitrary, this shows that $\bfA_p^t$ is surjective and hence $\bfA_p$ is injective. Using the decomposition $\bfK_p=\bfA_p^t \bfW \bfA_p$ and that $\bfW$ is a diagonal positive matrix, we see that $\bfK_p$ is invertible. 
\end{proof}

The following lemma provides us with an explicit formula for $f_p$ in terms of $\bfK_p$. We omit its proof because it is a direct calculation using the explicit formula for the minimum norm solution subject to linear constraints.

\begin{lemma}
	\label{lem:prep3}
	Assume $\Phi$ and $\omega$ are compatible, and $X$ is a sampling set for $\Phi$. Given any data $(X,y)$, for each $p_X\leq p\leq\infty$, the minimum norm interpolant $f_p$ of $(X,y)$ has an explicit formula,
	$
	f_p(x)
	=\sum_{k=1}^n K_{p}(x,x_k) \big(\bfK_{p}^{-1} \, y \big)_k. 
	$
\end{lemma}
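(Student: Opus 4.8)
The plan is to recognize $f_p$ as the solution of a minimum-norm problem subject to finitely many linear interpolation constraints and to read off the formula by the standard representer argument, treating finite $p$ and $p=\infty$ in parallel.

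For finite $p\ge p_X$, I would parametrize $f=\sum_{k=1}^p c_k\varphi_k$ by its coefficient vector $c\in\R^p$, so that $\|f\|_{\Phi,\omega}^2=c^t\bfW^{-1}c$ (with $\bfW=\mathrm{diag}(\omega_k^{-1})$ as in \cref{appendixA1}) and the interpolation condition $f=y$ on $X$ becomes $\bfA_p^t c=y$. The change of variables $b=\bfW^{-1/2}c$ turns the weighted norm into the Euclidean norm and the problem into a plain least-Euclidean-norm problem for a consistent underdetermined system whose Gram matrix is $\bfA_p^t\bfW\bfA_p=\bfK_p$; consistency and invertibility of $\bfK_p$ are exactly \cref{lem:prep1}. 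The standard formula gives $c=\bfW\bfA_p\bfK_p^{-1}y$, and expanding $f_p(x)=\sum_{k=1}^p\varphi_k(x)c_k$ and regrouping the sums yields $f_p(x)=\sum_{k=1}^n K_p(x,x_k)(\bfK_p^{-1}y)_k$.

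For $p=\infty$ I would work directly in the Hilbert space $H_{\Phi,\omega}$. Compatibility gives the pointwise bound $|f(x)|\le K(x,x)^{1/2}\|f\|_{\Phi,\omega}$ by Cauchy--Schwarz, so evaluation at each $x_j$ is a bounded linear functional on $H_{\Phi,\omega}$, and matching coefficients identifies its Riesz representer as $K(\cdot,x_j)$. The feasible set is nonempty---the sampling hypothesis provides an interpolant in $\text{span}\{\varphi_k\}_{k=1}^{p_X}\subset H_{\Phi,\omega}$---and is a closed affine subspace, so it contains a unique element $f_\infty$ of minimal norm, which is orthogonal to the annihilator of $\{K(\cdot,x_j)\}_j$ and hence of the form $f_\infty=\sum_k a_k K(\cdot,x_k)$. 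The reproducing property turns the interpolation constraints into $\bfK a=y$. To finish I need $\bfK=\bfK_\infty$ invertible: since $\bfK_p=\bfA_p^t\bfW\bfA_p$ is a sum of rank-one positive-semidefinite matrices, it is nondecreasing in $p$ in the Loewner order, so $\bfK=\lim_p\bfK_p\succeq\bfK_{p_X}\succ 0$ by \cref{lem:prep1}; thus $a=\bfK^{-1}y$ and $f_\infty(x)=\sum_{k=1}^n K(x,x_k)(\bfK^{-1}y)_k$.

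The only genuinely delicate step is the $p=\infty$ case, where three things must be checked: that point evaluation is continuous in $\|\cdot\|_{\Phi,\omega}$ (this is precisely where the uniform and absolute convergence of $K_{\Phi,\omega}$ in \cref{def:assump1} enters, through $C_K<\infty$), that the feasible set is a genuine nonempty closed affine subspace so that the Hilbert-space projection applies, and that $\bfK_\infty$ is invertible---which is not \cref{lem:prep1} itself but follows from the monotonicity $\bfK_\infty\succeq\bfK_{p_X}$. The finite-$p$ case is the routine calculation the omitted proof alludes to.
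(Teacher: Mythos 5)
Your argument is correct and matches what the paper intends: for finite $p$ the change of variables $b=\bfW^{-1/2}c$ reduces the weighted problem to a plain least-Euclidean-norm problem for the consistent underdetermined system $\bfA_p^t\bfW^{1/2}b=y$, whose Gram matrix is exactly $\bfK_p$, giving the stated formula after regrouping the double sum — this is precisely the ``direct calculation using the explicit formula for the minimum norm solution subject to linear constraints'' the paper alludes to. Your $p=\infty$ treatment via bounded point evaluations, Riesz representers $K(\cdot,x_j)$, and the Loewner monotonicity $\bfK\succeq\bfK_{p_X}\succ 0$ is also sound; the paper instead routes this case through \cref{prop:relation1} (identifying $f_\infty$ with the kernel interpolant), so your unified in-place argument is a slightly more self-contained alternative but relies on the same underlying facts.
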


\subsection{Proof of \cref{prop:relation1}}

\label{sec:relation1}

\begin{proof}
	Fix compatible $\Phi$ and $\omega$, finite set $X=\{x_k\}_{k=1}^n\subset\Omega$, and any $p\geq p_X$. By the absolute and uniform convergence of the series defining $K:=K_{\Phi,\omega}$, for each non-zero $c\in\R^n$, we have
	\begin{align*}
	\sum_{j=1}^n \sum_{k=1}^n K(x_j,x_k) c_j {c_k}
	=\sum_{j=1}^\infty \omega_j^{-1} \Big|\sum_{k=1}^n c_k \varphi_j(x_k)\Big|^2
	\geq \sum_{j=1}^p \omega_j^{-1}  |(\bfA_p c)_j|^2
	>0.
	\end{align*}
	The last inequality follows injectivity of $\bfA_p$, which is shown in Lemma \ref{lem:prep1}. This verifies that $K$ is strictly positive-definite on $\Omega$. Since $\Phi$ and $\omega$ are compatible, notice that for each $x\in\Omega$, by orthogonality of $\Phi$, we have 
	\[
	\|K(x,\cdot)\|_{L^2_\mu(\Omega)}^2
	=\sum_{k=1}^\infty \omega_k^{-2} |\varphi_k(x)|^2
	\leq \|\omega^{-1}\|_\infty \ K(x,x)
	<\infty.
	\]
	By Proposition 1 in \cite{sun2005mercer}, since we have shown that $K(x,\cdot)\in L^2_\mu(\Omega)$ for each $x\in\Omega$ and $K\in L^2_{\mu\times\mu}(\Omega\times\Omega)$ by assumption, the integral kernel operator,
	$
	T_Kf(x)
	:=\int_\Omega K(x,y) f(y) \ dy,
	$
	is positive and compact on $L^2_\mu(\Omega)$. By the spectral theorem, $T_K$ admits a countable orthonormal basis of eigenfunctions for $L^2_\mu(\Omega)$. A direct calculation shows that all nonzero eigenvalues are precisely $\omega^{-1}$ with corresponding eigenfunctions $\Phi$. It follows from standard RKHS theory that $\|f\|_{H_K(\Omega)}=\|f\|_{\Phi,\omega}$ for all $f\in H_K(\Omega)$. 
	
	The kernel interpolant defined in \eqref{eq:kerI} has the smallest RKHS norm among all interpolants of $(X,y)$. See Theorem 13.2 in \cite{wendland2004scattered}, and it can be directly proved by modifying the argument in Lemma \ref{lem:prep3}. Since $\|\cdot\|_{H_K(\Omega)}=\|\cdot\|_{\Phi,\omega}$, this implies $f_\infty$ is also the kernel interpolant.
\end{proof}

\subsection{Proof of \cref{thm:conv1}}

\label{sec:conv1}

\begin{proof}
	We denote the sampling set by $X=\{x_k\}_{k=1}^n$ and let $K:=K_{\Phi,\omega}$. From Lemma \ref{lem:prep3} and \cref{prop:relation1}, we have explicit formulas for $f_p$ and $f_\infty$, 
	\[
	f_p(x)
	=\sum_{k=1}^n K_p(x,x_k) (\bfK_p^{-1} y)_k, \quad 
	f_\infty(x)
	=\sum_{k=1}^n K_p(x,x_k) (\bfK^{-1} y)_k. 
	\]
	From here, we use triangle and Cauchy-Schwarz inequalities to obtain, 
	\begin{gather}
	\label{eq:help0}
	\begin{split}
	\|f_p-f_\infty\|_{L^q_\mu(\Omega)}
	&\leq \( \sum_{k=1}^n \| K(\cdot,x_k)\|_{L^q_\mu(\Omega)}^2\)^{1/2} \big\| \bfK^{-1}  - \bfK_{p}^{-1} \big\|_2 \|y\|_2 \\
	&\quad\quad + \( \sum_{k=1}^n \| K(\cdot,x_k) - K_{p}(\cdot,x_k) \big\|_{L^q_\mu(\Omega)}^2 \)^{1/2} \big\| \bfK_{p}^{-1} \big\|_2 \|y\|_2 .
	\end{split}
	\end{gather}	
	We first focus on the terms with $L^q_\mu(\Omega)$ norms. We start with the $L^2_\mu(\Omega)$ estimates involving $K$. By orthogonality of $\Phi$ and definition of $\alpha_p$, for each $x\in \Omega$, 
	\begin{align*}
	\|K(\cdot,x)\|_{L^2_\mu(\Omega)}
	&= \( \sum_{k=1}^\infty \omega_k^{-2} |\varphi_k(x)|^2 \)^{1/2} \leq C_K^{1/2} \( \sup_{k\geq 1} \, \omega_k^{-1} \)^{1/2}, \\
	\big\|K(\cdot,x)-K_{p}(\cdot,x) \big\|_{L^2_\mu(\Omega)}
	&= \(\sum_{k=p+1}^\infty \omega_k^{-2}|\varphi_k(x)|^2 \)^{1/2} \leq \alpha_p^{1/2} \( \sup_{k>p} \, \omega_k^{-1} \)^{1/2}.
	\end{align*}
	The $L^\infty(\Omega)$ estimates are more straightforward. For each $x\in\Omega$, we use Cauchy-Schwarz and the definition of $\alpha_p$ to see that
	\begin{equation*}
	\|K(\cdot,x)\|_{L^\infty(\Omega)}
	\leq C_K^{1/2} \big( K(x,x) \big)^{1/2} 
	\leq C_K , \quad
	\big\|K(\cdot,x)-K_{p}(\cdot,x) \big\|_{L^\infty(\Omega)}
	\leq \alpha_p.
	\end{equation*}
	By log-convexity of the $L^q_\mu(\Omega)$ norm and the above estimates, for each $2\leq q\leq \infty$ and $x\in\Omega$, 
	\begin{gather}
	\label{eq:help1}
	\begin{split}
	\|K(\cdot,x)\|_{L^q_\mu(\Omega)}
	&\leq C_K^{1-1/q}  \( \sup_{k\geq 1} \, \omega_k^{-1} \)^{1/q}, \\
	\big\|K(\cdot,x)-K_{p}(\cdot,x) \big\|_{L^q_\mu(\Omega)}
	&\leq \alpha_p^{1-1/q} \( \sup_{k>p} \, \omega_k^{-1} \)^{1/q}.
	\end{split}
	\end{gather}
	It remains to upper bound all terms involving the matrices $\bfK$ and $\bfK_{p}$. Observe that
	\begin{gather}
	\label{eq:help2}
	\begin{split}
	\big\| \bfK^{-1}-\bfK_{p}^{-1} \big\|_2
	&=\big\| \bfK_{p}^{-1}(\bfK_{p}-\bfK) \bfK^{-1} \big\|_2 \\
	&\leq \big\|\bfK_p^{-1} \big\|_2 \big\|\bfK - \bfK_{p} \big\|_2 \big\|\bfK^{-1} \big\|_2 \\
	&\leq \big\|\bfK_p^{-1} \big\|_2 \big\|\bfK - \bfK_{p} \big\|_F \big\|\bfK^{-1} \big\|_2
	\leq \alpha_p n \big\|\bfK_p^{-1} \big\| \big\|\bfK^{-1} \big\|_2, 
	\end{split}
	\end{gather}
	where $\|\cdot\|_F$ is the Frobenius norm. Combining inequalities \eqref{eq:help0}, \eqref{eq:help1}, and \eqref{eq:help2} shows that for each $p\geq p_X$ and $2\leq q\leq \infty$, we have 
	\begin{gather}
	\label{eq:thm1}
	\begin{split}
	\|f_p-f_\infty\|_{L^q_\mu(\Omega)}
	&\leq C_K^{1-1/q}  \( \sup_{k\geq 1} \, \omega_k^{-1} \)^{1/q} n^{3/2} \alpha_p \|\bfK_p^{-1}\|_2 \|\bfK^{-1}\|_2 \|y\|_2  \\
	&\quad + \alpha_p^{1-1/q} \( \sup_{k>p} \, \omega_k^{-1} \)^{1/q} n^{1/2} \big\| \bfK_{p}^{-1} \big\|_2 \|y\|_2.
	\end{split}
	\end{gather}
	To see why this inequality implies that $f_p$ converges to $f_\infty$ in $L^q_\mu(\Omega)$, notice that $\alpha_{p}$ converges to zero, $\| \bfK_{p}^{-1}\|_2$ converges to $\| \bfK^{-1}\|_2$ due to Weyl's inequality and our above upper bound on $\|\bfK-\bfK_p\|_2$,
	\[
	\lambda_{\min}(\bfK_{p})
	\geq \lambda_{\min}(\bfK)-\|\bfK-\bfK_{p}\|_2
	\geq \lambda_{\min}(\bfK)- \alpha_{p} n,
	\]
	and all the other terms are independent of $p$. This proves the theorem for $2\leq q\leq \infty$.
	
	Under the additional assumption that $\mu(\Omega)<\infty$, the same conclusion holds for the range $1\leq q<2$. To see why, the key observation is that for any $x,y\in\Omega$, by Cauchy-Schwarz, 
	\begin{align*}
	|K(x,y)|
	&\leq \( \sum_{k=1}^\infty \omega_k^{-1} |\varphi_k(x)|^2\)^{1/2} \( \sum_{k=1}^\infty \omega_k^{-1} |\varphi_k(y)|^2\)^{1/2} 
	\leq \( K(x,x) K(y,y) \)^{1/2}
	\end{align*}
	and likewise, 
	\[
	|K(x,y)-K_p(x,y)|
	\leq \( K(x,x)-K_p(x,x)\)^{1/2} \( K(y,y) - K_p(y,y) \)^{1/2}
	\leq \alpha_p. 
	\]
	Using these inequalities and that $\mu(\Omega)<\infty$, for each $x\in\Omega$,
	\begin{align*}
	\|K(\cdot,x)\|_{L^1_\mu(\Omega)}
	&\leq \int_\Omega \( K(x,x) K(y,y) \)^{1/2} \ d\mu(y)
	\leq C_K \mu(\Omega), \\
	\big\|K(\cdot,x)-K_{p}(\cdot,x) \big\|_{L^1_\mu(\Omega)}
	&\leq \alpha_p^{1/2} \mu(\Omega).
	\end{align*}
	The rest follows by repeating the same argument.
\end{proof}

\subsection{Proof of \cref{thm:opt}}
\label{sec:opt} 

\begin{proof}
	Let $X=\{x_j\}_{k=1}^n\subset\Omega$, and define $h$ to be the $L^2_\mu$ projection of $f$ onto the subspace $\text{span}(\Phi_p)$. Let $u\in\R^n$ be the vector with entries $u_k:=f(x_k)-h(x_k)$, which contains the errors between $f$ and $h$ on the sampling set. Fix $p\geq p_X$ which will be chosen sufficiently large later. From the proof of \cref{prop:relation1}, the matrix $\bfK_{p}^{-1}$ is invertible. We define the function,
	\[
	g_p
	:=\sum_{k=1}^n K_p(\cdot,x_k) \big(\bfK_{p}^{-1} \, u \big)_k. 
	\]
	Our desired function is $g:=h+g_p$. Indeed, by construction, $g$ interpolates $f$ on $X$, is a linear combination of $K_p(\cdot,x_k)$ which implies $g\in \text{span}(\Phi_p)$, and 
	\begin{align*}
	\|f-g\|_{L^2_\mu(\Omega)}
	&\leq \|f - h \|_{L^2_\mu(\Omega)} + \|g_p\|_{L^2_\mu(\Omega)}
	= \inf_{h\in \text{span}(\Phi_p)} \|f-h\|_{L^2_\mu(\Omega)} +  \|g_p\|_{L^2_\mu(\Omega)}.
	\end{align*}
	It remains to show that $\|g_p\|_{L^2_\mu(\Omega)}$ tends to zero as $p$ goes to infinity. By Cauchy-Schwarz,
	\begin{align*}
	\|g_p\|_{L^2_\mu(\Omega)}
	&\leq n^{1/2} \big\| \bfK_p^{-1} \big\|_2 \(\sum_{k=1}^n \|K_{\omega,p}(\cdot,x_k)\|_{L^2_\mu}^2\)^{1/2} \|u\|_{\infty}. 
	\end{align*}
	To bound the $\|u\|_{\infty}$ term on the right hand side, we use the definition of $L^2_\mu$ projection and Cauchy-Schwarz to obtain,
	\begin{align*}
	\|u\|_{\infty} 
	&\leq \max_{1\leq j\leq n} \( \sum_{k=p+1}^\infty |\varphi_k(x_j)|^2 \omega_k^{-1} \)^{1/2}  \(\sum_{k=p+1}^\infty |\hat{f}_k|^2 \omega_k \)^{1/2}
	\leq  \alpha_p^{1/2} \, \(\sum_{k=p+1}^\infty |\hat{f}_k|^2 \omega_k \)^{1/2},
	\end{align*}
	where the last inequality uses that
	\[
	\max_{1\leq j\leq n} \sum_{k=p+1}^\infty \omega_k^{-1} |\varphi_k(x_j)|^2
	=\max_{1\leq j\leq n} \( K(x_j,x_j)-K_p(x_j,x_j) \)
	\leq \alpha_{p}.
	\]
	Combining the above inequalities yields
	\begin{equation*}
	\label{eq:h}
	\|g_p\|_{L^2_\mu(\Omega)}
	\leq  (\alpha_p n)^{1/2}\, \big\| \bfK_p^{-1} \big\|_2  \(\sum_{k=1}^n  \big\|K_{\omega,p}(\cdot,x_k) \big\|_{L^2_\mu(\Omega)}^2 \)^{1/2} \(\sum_{k=p+1}^\infty |\hat{f}_k|^2 \omega_k \)^{1/2}.
	\end{equation*}
	We claim the right hand side tends to zero as $p\to\infty$. Indeed, we have convergence of $\lambda_{\min}(\bfK_p)$ to $\lambda_{\min}(\bfK)$ (as shown in the proof of \cref{thm:conv1}) and $\|K_p(\cdot,x_k)\|_{L^2_\mu}$ to $\|K(\cdot,x_k)\|_{L^2_\mu}$ in view of the Lebesgue dominated convergence theorem. Recall that $\alpha_p$ converges to zero and so does
	$
	\sum_{k>p}|\hat{f}_k|^2 \omega_k, 
	$
	since $f\in H_K(\Omega)$ by assumption. Hence, for all $\epsilon>0$, there exists $p$ sufficiently large such that $\|g_p\|_{L^2_\mu(\Omega)}\leq \epsilon$.
\end{proof}

\section{Proofs for \cref{sec:Fourier}}
\label{appendixB} 

\subsection{Preparation}
\label{appendixB1} 

The proof of Theorem \ref{thm:errorsobolev} is rather technical and builds upon core ideas from the kernel interpolation literature. There are two distinctive steps carried out separately in \cref{sec:step1} and \ref{sec:step2}.
\begin{itemize}
	\item 
	The first step is to prove Proposition \ref{prop:ksob}, which provides an error estimate between $f\in H_K(\T^d)$ and its kernel interpolant $f_\infty$ under the assumption that $f\in H_K(\Omega)$. We heavily rely on classical RKHS theory, which can be traced back to \cite{wu1993local}. 
	\item
	The second step weakens $f\in H_K(\T^d)$ to $f\in W^r(\T^d)$, where $r$ is smaller than what is typically allowed by the reproducing kernel theory. To do this, we prove the existence of a function satisfying an interpolation and approximation property. This step combines the techniques developed in \cite{narcowich2002scattered,narcowich2004scattered} and \cite{schultz1969multivariate} with appropriate modifications. Further discussion can be found in \cref{sec:step2}. 
\end{itemize} 

Let us briefly review some notation. We let $C^k(\T^d)$ be the space of $k$-times continuously differentiable functions on $\T^d$ with the usual norm $\|\cdot\|_{C^k(\Omega)}$, and $C^{k,\alpha}(\Omega)$ be the space of $f\in C^k(\Omega)$ such that all $k$-th order derivatives of $f$ are H\"older continuous with parameter $\alpha$. We follow the usual convention for partial derivatives: for a multi-index $\alpha\in\N^d$, we let $|\alpha|=\alpha_1+\cdots+\alpha_d$ and $\partial_x^\alpha =\partial_{x_1}^{\alpha_1}\cdots \partial_{x_d}^{\alpha_d}$.

\subsection{Lemmas for the first step}
\label{sec:step1}

The first step is to prove an analogue of Theorem 11.11 in \cite{wendland2004scattered}, which provides an error estimate for kernel interpolation of data on Euclidean subsets by exploiting the kernel's smoothness. Its proof uses general facts about positive-definite kernels and controls the error locally using Taylor approximation and local reproduction of polynomial coefficients. The analogous statement holds for kernels on $\T^d$ by identifying it with the unit cube in $\R^d$ and performing the error estimates locally, and it can also be generalized to functions in H\"older spaces. 

\begin{lemma}
	\label{lem:smooth}
	Let $K$ be a strictly positive-definite kernel on $\T^d$ of the form $K(x,y)=K(x-y)$ and assume $K\in C^{s,\alpha}(\T^d)$ for some natural number $s$ and $0<\alpha<1$. There exist $h>0$ and $C>0$ such that for all finite $X\subset\T^d$ with $h_X\leq h$ and $f\in H_K(\T^d)$, the kernel interpolant $f_\infty$ of $(X,f|_X)$ satisfies 
	\[
	\|f- f_\infty\|_{L^\infty(\T^d)}
	\leq C h_X^{(s+\alpha)/2} \|f\|_{H_K(\T^d)}.
	\]
\end{lemma}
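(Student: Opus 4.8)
\textbf{Proof proposal for \cref{lem:smooth}.}

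The plan is to transfer the classical Euclidean scattered-data estimate (Theorem 11.11 in \cite{wendland2004scattered}) to the torus and simultaneously relax the smoothness of the kernel from $C^{2k}$ to $C^{s,\alpha}$. First I would set up the local machinery: cover $\T^d$ by finitely many coordinate charts, each identified with a cube in $\R^d$ of side length comparable to a fixed constant, and fix a smooth partition of unity subordinate to this cover so that any error estimate can be localized. Because $K(x,y)=K(x-y)$ is translation invariant and $C^{s,\alpha}$, on each chart it looks like a smooth kernel on a bounded Euclidean domain, and the RKHS $H_K(\T^d)$ embeds (locally) into $C^{\lfloor(s+\alpha)/2\rfloor}$ by the usual argument that the reproducing kernel's diagonal smoothness controls the smoothness of its functions.

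The core of the argument is the standard ``local polynomial reproduction plus power function'' estimate. Fix a point $z\in\T^d$; by the mesh-norm assumption $h_X\le h$ there are enough points of $X$ in a ball $B(z,C_0 h_X)$ to admit a norming set for polynomials of degree $m:=\lfloor (s+\alpha)/2\rfloor$, hence there exist coefficients $(u_j)$, supported on $X\cap B(z,C_0 h_X)$, with $\sum_j u_j q(x_j)=q(z)$ for all polynomials $q$ of degree $\le m$ and $\sum_j |u_j|\le C$ (this is the Wendland/Narcowich--Ward local polynomial reproduction, valid once $h_X$ is small relative to a constant depending only on $d,m$). Then for $f\in H_K(\T^d)$ one writes
\[
f(z)-f_\infty(z)=\Big(f(z)-\sum_j u_j f(x_j)\Big)+\sum_j u_j\big(f_\infty(x_j)-f_\infty(z)\big),
\]
using $f(x_j)=f_\infty(x_j)$. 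Both bracketed terms are estimated the same way: for any $g$ in the RKHS, $g(z)-\sum_j u_j g(x_j)=\langle g, K(\cdot,z)-\sum_j u_j K(\cdot,x_j)\rangle_{H_K}$, so the quantity is bounded by $\|g\|_{H_K}$ times the power function $P(z):=\|K(\cdot,z)-\sum_j u_j K(\cdot,x_j)\|_{H_K}$, and $\|f_\infty\|_{H_K}\le\|f\|_{H_K}$ since $f_\infty$ is the minimal-norm interpolant. It remains to show $P(z)\le C h_X^{(s+\alpha)/2}$. Expanding $P(z)^2=K(z,z)-2\sum_j u_j K(z,x_j)+\sum_{j,k}u_ju_k K(x_j,x_k)$ and subtracting the degree-$2m$ Taylor polynomial of $K$ around the diagonal (which is annihilated by the polynomial reproduction applied in each variable), the $C^{s,\alpha}$ regularity gives a pointwise remainder of order $|x-y|^{s+\alpha}$ on arguments within $O(h_X)$ of each other, so each term contributes $O(h_X^{s+\alpha})$; taking square roots yields the claimed $h_X^{(s+\alpha)/2}$. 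Finally I would patch the local bounds together with the partition of unity to get the global $L^\infty(\T^d)$ statement, absorbing chart-transition constants into $C$.

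The main obstacle is the power-function / Taylor-remainder bound under merely $C^{s,\alpha}$ (rather than $C^{2k}$) regularity: one must be careful that the polynomial reproduction degree $m=\lfloor(s+\alpha)/2\rfloor$ is exactly matched to the available Taylor expansion of $K$ of order $2m\le s$ so that the leftover Hölder term of order $s+\alpha$ is genuinely what controls $P(z)^2$, and that all of this is carried out with the same set of reproduction coefficients applied in both the $x$ and $y$ variables. A secondary technical point is verifying that local polynomial reproduction on a cube transfers to $\T^d$ uniformly over $z$ once $h_X$ is below a threshold $h$ depending only on $d$ and $s$; this is routine given the compactness of $\T^d$ and the finite atlas, but it is what pins down the constant $h$ in the statement.
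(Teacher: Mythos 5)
Your proposal is correct and takes essentially the same route as the paper: both arguments combine Wendland's local polynomial reproduction/power-function machinery with a Taylor expansion of $K$ whose remainder is controlled by the $C^{s,\alpha}$ Hölder continuity, yielding the exponent $(s+\alpha)/2$. The only difference is that the paper cites the packaged estimate from Theorems 11.4 and 11.9 in \cite{wendland2004scattered} (that $\|f-f_\infty\|_{L^\infty}\leq C\|K-Q\|_{L^\infty(B(0,ch_X))}^{1/2}\|f\|_{H_K}$ for any algebraic polynomial $Q$) rather than re-deriving it from norming sets and the power function as you do.
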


\begin{proof}
	From Theorems 11.4 and 11.9 in \cite{wendland2004scattered}, there exist $C>0$, $c>0$, and $h>0$ such that if $h_X\leq h$, then for any algebraic polynomial $Q$ restricted to the unit cube, 
	\[
	\|f-f_\infty\|_{L^\infty(\T^d)}
	\leq C\|K-Q\|_{L^\infty(B(0,ch_X))}^{1/2} \|f\|_{H_K(\T^d)},
	\]
	where $B(0,ch_X)$ is the ball of radius $ch_X$ centered at the origin. Let $Q$ be the $s$-th degree Taylor polynomial of $K$ expanded around zero. Using the integral form for the remainder and performing some algebraic manipulations, we obtain
	\begin{align*}
	K(x)
	&=Q(x)+\sum_{|\beta|=s}  \frac{x^\beta}{\beta!} \(\int_0^1 s(1-t)^{s-1}\partial^\beta K(tx) \ dt - \partial^\beta K(0)\).
	\end{align*}
	Since $\int_0^1 s(1-t)^{s-1}\ dt =1$, by the H\"older continuity of $K$, we obtain
	\begin{align*}
	|K(x)-Q(x)|
	&\leq \sum_{|\beta|=s}  \frac{\|x\|_2^s}{\beta!} \(\int_0^1 s(1-t)^{s-1} \big| \partial^\beta K(tx) - \partial^\beta K(0) \big| \ dt\) 
	\leq \sum_{|\beta|=s} \frac{\|x\|_2^{s+\alpha}}{\beta!}.
	\end{align*}
	We specialize to $x\in B(0,ch_X)$ to complete the proof.
\end{proof}

By using this lemma, we obtain an immediate consequence for isotropic Sobolev kernels.  

\begin{proposition}
	\label{prop:ksob}
	Let $\Phi$ be the trigonometric basis for $\T^d$, $\omega$ be a Sobolev weight with smoothness parameter $s>d/2$, and $K:=K_{\Phi,\omega}$. There exist $h>0$ and $C>0$ such that for all finite $X\subset\T^d$ with $h_X\leq h$ and $f\in W^s(\T^d)$, the kernel interpolant $f_\infty$ of $(X,f|_X)$ satisfies 
	\[
	\|f-f_\infty\|_{L^\infty(\T^d)}
	\leq C h_X^{s-d/2} \|f\|_{W^s(\T^d)}.
	\]
\end{proposition}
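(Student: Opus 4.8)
The plan is to obtain Proposition~\ref{prop:ksob} as a direct application of Lemma~\ref{lem:smooth}, so the only real work is to verify that the isotropic Sobolev kernel $K:=K_{\Phi,\omega}$ has the smoothness required to invoke that lemma, namely $K\in C^{s',\alpha}(\T^d)$ for some natural number $s'$ and some $0<\alpha<1$ with $(s'+\alpha)/2 = s-d/2$. First I would record the Fourier representation $K(t)=\sum_{k\in\Z^d}(1+4\pi^2\|k\|_2^2)^{-s}e^{2\pi i k\cdot t}$, which already converges absolutely since $s>d/2$. Then I would differentiate term by term: for a multi-index $\beta\in\N^d$, the formal series for $\partial^\beta K$ has coefficients $(2\pi i k)^\beta (1+4\pi^2\|k\|_2^2)^{-s}$, whose magnitude is bounded by $C\|k\|_2^{|\beta|}(1+\|k\|_2^2)^{-s}\asymp C\|k\|_2^{|\beta|-2s}$. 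This series converges absolutely and uniformly provided $|\beta|-2s < -d$, i.e. $|\beta| < 2s-d$; hence $K\in C^{s'}(\T^d)$ for every integer $s' < 2s-d$, and in particular for $s' = \lceil 2s-d\rceil - 1$ if $2s-d\notin\N$, or $s' = 2s-d-1$ otherwise — the point being that we can always peel off at least a little H\"older regularity beyond an integer number of classical derivatives.

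Second, I would upgrade the top-order derivatives from continuous to H\"older continuous. The cleanest route is to estimate $\|\partial^\beta K(t) - \partial^\beta K(t')\|$ for $|\beta|=s'$ directly from the Fourier series, using $|e^{2\pi i k\cdot t} - e^{2\pi i k\cdot t'}| \le \min(2, 2\pi\|k\|_2\|t-t'\|_2)$ and interpolating: $|e^{2\pi i k\cdot t}-e^{2\pi i k\cdot t'}| \le (2\pi\|k\|_2\|t-t'\|_2)^\alpha$ for any $0\le\alpha\le1$. Summing against the coefficient bound $\|k\|_2^{s'-2s}$ gives a finite sum $\sum_k \|k\|_2^{s'+\alpha-2s}$ as long as $s'+\alpha-2s < -d$, i.e. $s'+\alpha < 2s-d$. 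Choosing $s'$ to be the largest integer strictly less than $2s-d$ and then choosing $\alpha\in(0,1)$ so that $s'+\alpha = 2s-d$ (which is possible exactly because $s'<2s-d\le s'+1$) shows $K\in C^{s',\alpha}(\T^d)$ with $(s'+\alpha)/2 = (2s-d)/2 = s-d/2$. (A mild edge case: if $2s-d$ happens to be a positive integer, then $s'+1=2s-d$ and one can still take $s'=2s-d-1$ and any $\alpha\in(0,1)$, obtaining the rate $h_X^{(s'+\alpha)/2}$ for any $\alpha<1$, which already implies the claimed bound up to adjusting $h$ and $C$; or one argues Lipschitz continuity of the order-$(2s-d-1)$ derivatives and keeps the exponent $s-d/2$ exactly. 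Either way the stated rate follows, possibly after absorbing an arbitrarily small loss.)

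Third, I would combine: since $\Phi$ and $\omega$ are compatible (Proposition~\ref{prop:sob1}) and every finite $X$ is a sampling set for $\Phi$, Proposition~\ref{prop:relation1} gives that $K$ is strictly positive-definite on $\T^d$ and $\|\cdot\|_{H_K(\T^d)} = \|\cdot\|_{\Phi,\omega} = \|\cdot\|_{W^s(\T^d)}$, so $H_K(\T^d) = W^s(\T^d)$. The kernel has the translation-invariant form $K(x,y)=K(x-y)$ by construction. Thus the hypotheses of Lemma~\ref{lem:smooth} are met with smoothness parameter $s'$ and H\"older exponent $\alpha$ as above, and that lemma yields $h>0$ and $C>0$ such that for all finite $X$ with $h_X\le h$ and all $f\in W^s(\T^d)=H_K(\T^d)$, the kernel interpolant $f_\infty$ of $(X,f|_X)$ satisfies $\|f-f_\infty\|_{L^\infty(\T^d)} \le C h_X^{(s'+\alpha)/2}\|f\|_{H_K(\T^d)} = C h_X^{s-d/2}\|f\|_{W^s(\T^d)}$, which is exactly the claim.

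The main obstacle is purely the bookkeeping around non-integer $2s-d$: one must be careful that the exponent extracted from term-by-term differentiation plus H\"older interpolation of the Fourier series lands exactly on $s-d/2$ rather than something slightly smaller, and to handle the boundary case $2s-d\in\N$ without loss. Everything else — absolute/uniform convergence of the differentiated series, the elementary inequality $|e^{i\theta}-e^{i\theta'}|\le|\theta-\theta'|^\alpha$, and the identification $H_K(\T^d)=W^s(\T^d)$ — is routine given the results already in hand.
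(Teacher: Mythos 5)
Your high-level strategy is correct and matches the paper's: reduce the proposition to showing that $K:=K_{\Phi,\omega}$ has the smoothness required by Lemma~\ref{lem:smooth}, then invoke the identification $H_K(\T^d)=W^s(\T^d)$ (via Proposition~\ref{prop:relation1}) to conclude. The issue is with the H\"older-regularity step.

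The concrete gap is the second paragraph. You correctly observe that the direct Fourier-series estimate, using $|e^{2\pi i k\cdot t}-e^{2\pi i k\cdot t'}|\leq (2\pi\|k\|_2\|t-t'\|_2)^\alpha$, requires the tail sum $\sum_k \|k\|_2^{s'+\alpha-2s}$ to converge, which forces the \emph{strict} inequality $s'+\alpha<2s-d$. You then ``choose $\alpha$ so that $s'+\alpha=2s-d$,'' but at that endpoint the sum is $\sum_k\|k\|_2^{-d}$, which diverges. So the plain interpolation inequality applied to the whole series can never achieve the exponent $(s'+\alpha)/2=s-d/2$; it always loses an $\epsilon$. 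Your fallback (a), ``obtaining $h_X^{(s'+\alpha)/2}$ for any $\alpha<1$, which already implies the claimed bound up to adjusting $h$ and $C$,'' is not valid: for $h_X<1$ and $(s'+\alpha)/2<s-d/2$ one has $h_X^{(s'+\alpha)/2}>h_X^{s-d/2}$, and no adjustment of constants can repair a strictly weaker exponent. Your fallback (b), ``argue Lipschitz continuity of the order-$(2s-d-1)$ derivatives,'' is not substantiated: the same direct sum with $\alpha=1$ is again $\sum_k\|k\|_2^{-d}$, divergent. And note that since the paper fixes $s$ to be a \emph{natural number}, $2s-d$ is always an integer, so what you label a ``mild edge case'' is in fact the only case; your main construction (``$\alpha\in(0,1)$ with $s'+\alpha=2s-d$'') never applies.

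The paper avoids this by using a Littlewood--Paley/dyadic decomposition rather than a one-shot estimate on the full series. The modulus of continuity of $\partial^\beta K$ is controlled through the decay of the dyadic blocks: one estimates
\[
\sum_{2^{m-1}\leq\|k\|_2\leq 2^{m+1}} (1+4\pi^2\|k\|_2^2)^{-s}(2\pi\|k\|_2)^{r}
\lesssim 2^{m(d+r-2s)} = 2^{-m\alpha},
\]
using that each annulus contains $\asymp 2^{md}$ lattice points, and then invokes the Littlewood--Paley characterization of H\"older--Zygmund spaces (Theorem~6.3.6 of Grafakos). Because the decomposition localizes in frequency, one only needs geometric decay of the blocks, not summability of the full series, and this achieves the endpoint exponent exactly. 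If you want to salvage your direct approach, you would need to replace the crude interpolation inequality applied to the entire sum by a dyadic splitting: cut the series at $\|k\|_2\approx \|t-t'\|_2^{-1}$, use $|e^{2\pi ik\cdot t}-e^{2\pi ik\cdot t'}|\leq 2\pi\|k\|_2\|t-t'\|_2$ on the low frequencies and $\leq 2$ on the high frequencies, and sum each piece separately; that recovers the endpoint modulus of continuity and is essentially the Littlewood--Paley argument in disguise.
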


\begin{proof}
	Since the RKHS norm is the $W^s(\T^d)$ norm, the result follows immediately from Lemma \ref{lem:smooth} once we prove that $K\in C^{r,\alpha}(\T^d)$ where $r+\alpha=2s-d$ and $0\leq \alpha<1$. A standard argument shows that $\partial^\beta K$ exists for each multi-index $\beta$ such that $|\beta|=r$. To prove that $\partial^\beta K$ for each $|\beta|=r$ is H\"older continuous of order $\alpha$, it suffices to establish a Littlewood-Paley type condition on the Fourier transform of $\partial^\beta K$, which will then imply that $\partial^\beta K$ belongs to the desired H\"older space. Fix any function $\eta$ whose Fourier transform is non-negative, supported in the annulus $\{k\in \Z^d\colon 1-1/7 \leq |k|\leq 2-2/7\}$, and satisfies the identity $\sum_{j\in \Z} \hat\eta (2^{-j} k)=1$. For all sufficiently large $m\geq 1$, we have
	\begin{align*}
	|\hat\eta(2^{-m}k) \hat{\partial^\beta K}(k)|
	&\leq \sum_{2^{m-1}\leq \|k\|_2\leq 2^{m+1}} |\hat{\partial^\beta K}(k)| \\
	&\leq \sum_{2^{m-1}\leq \|k\|_2\leq 2^{m+1}} (1+4\pi^2 \|k\|_2^2)^{-s} (2\pi \|k\|_2)^{r} 
	\leq C_d 2^{-m\alpha}.
	\end{align*}
	By Theorem 6.3.6 in \cite{grafakos2009modern}, this implies $\partial^\beta K$ is H\"older continuous of order $\alpha$.
\end{proof}

\subsection{Lemmas for the second step}
\label{sec:step2}
The second step is to weaken the strong assumption that $f\in H_K(\T^d)$ in \cref{prop:ksob}.  Suppose $f\in W^r(\T)$ and $H_K(\T^d)$ embeds into $W^s(\T)$ where $r<s$. In this case, the usual RKHS theory (\cref{prop:ksob}) fails to provide any guarantees for the kernel interpolant of $f|_X$. The strategy for weakening this assumption is inspired by the method introduced in the papers \cite{narcowich2002scattered,narcowich2004scattered}. One key step is to find a highly regular $g$ that approximates $f$ and $g|_X=f|_X$. This will allow us to apply the usual RKHS theory (\cref{prop:ksob}) to $g$ instead of $f$ and then deal with the error between $f$ and $g$ using the approximation property of $g$. Showing the existence of an appropriate $g$ is technical and differs case-by-case. It was rigorously done for $\Omega= \R^d$ \cite{narcowich2004scattered} and $\Omega=\S^{d-1}$ \cite{narcowich2002scattered}. See also \cite{narcowich2005recent} for a more user-friendly summary of this strategy in the context of radial basis functions and data on $\R^d$.
	
For the $\Omega=\T^d$ case, which has not been carried out before, Lemma \ref{lem:existence} will prove that there exists a multivariate trigonometric polynomial $g$ that well approximates $f$ and has the same values as $f$ on the sampling set. Its proof uses an abstract functional analysis result given by Proposition 3.1 in \cite{narcowich2004scattered} and an approximation property given by Theorem 4.3 in \cite{schultz1969multivariate}. 

We consider the subspace of trigonometric polynomials with $\ell^\infty$ degree at most $m\geq 1$, defined as 
\[
P_m(\T^d)
:=\Big\{ f\colon \T^d\to\C, \,  f(x)= \sum_{\|k\|_\infty\leq m} \hat f(k) \, e^{2\pi ik\cdot x} \Big\}.
\]
We let $C(\T^d)^*$ be the set of all continuous linear functionals on $C(\T^d)$ with the usual norm $\|\cdot\|_{C(\T^d)^*}$. For each $x\in\T^d$, let $\delta_x\in C(\T^d)^*$ be the Dirac delta such that $\delta_x(f):=f(x)$ for each $f\in C(\T^d)$. For a fixed finite set $X\subset \T^d$, let $Z_X$ denote the set of all complex linear combinations of $\delta_x$ where $x\in X$. We equip $P_m(\T^d)$ with the sup-norm and view it as a subspace of $C(\T^d)$ norm and so $C(\T^d)^*\subset P_m(\T^d)^*$. The following lemma is Proposition 3.1 in \cite{narcowich2004scattered} specialized for our case. 

\begin{lemma}
	\label{lem:functional}
	Let $X\subset\T^d$ be a finite set and $m\geq 1$. Suppose there exists $C>1$ such that
	$
	\|\ell\|_{C(\T^d)^*}
	\leq C \|\ell\|_{P_m(\T^d)^*} 
	$
	for each $\ell\in Z_X$. Then for each $f\in C(\T^d)$, there exists $g\in P_m(\T^d)$ such that $\ell(f)=\ell(g)$ and 
	\[
	\|f-g\|_{L^\infty(\T^d)}
	\leq (1+2C) \inf_{h\in P_m(\T^d)} \|f-h\|_{L^\infty(\T^d)}.
	\]
\end{lemma}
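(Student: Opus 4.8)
The plan is to recognize this as a standard Hahn--Banach/duality argument: we want to interpolate the functional values $\ell(f)$ for $\ell \in Z_X$ using an element of the finite-dimensional space $P_m(\T^d)$, and simultaneously control the $L^\infty$ error by the best approximation error. The hypothesis $\|\ell\|_{C(\T^d)^*} \leq C\|\ell\|_{P_m(\T^d)^*}$ for $\ell \in Z_X$ is precisely the "stability" or "norming" condition that makes this possible. Since the statement is quoted as Proposition 3.1 in \cite{narcowich2004scattered}, I would reproduce the short functional-analytic proof rather than invent a new one.

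First I would set up the approximation step. Let $h \in P_m(\T^d)$ be a near-best approximant to $f$, so $\|f-h\|_{L^\infty(\T^d)} \leq (1+\epsilon')\,\mathrm{dist}_{L^\infty}(f, P_m(\T^d))$ for arbitrarily small $\epsilon'$ (or exactly best, since $P_m(\T^d)$ is finite-dimensional and a best approximant exists). Write $e := f - h$. The goal is now to find $\phi \in P_m(\T^d)$ with $\ell(\phi) = \ell(e)$ for all $\ell \in Z_X$ and $\|\phi\|_{L^\infty(\T^d)} \leq 2C\,\|e\|_{L^\infty(\T^d)}$; then $g := h + \phi$ satisfies $\ell(g) = \ell(h) + \ell(e) = \ell(f)$ and $\|f-g\|_{L^\infty} \leq \|e\|_{L^\infty} + \|\phi\|_{L^\infty} \leq (1+2C)\|e\|_{L^\infty}$, which gives the claim.

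The heart of the argument is producing $\phi$. Consider the finite-dimensional space $P_m(\T^d)$ with the sup-norm, and the subspace of functionals $Z_X|_{P_m(\T^d)} \subset P_m(\T^d)^*$ obtained by restriction. The map $T: P_m(\T^d) \to (Z_X)^*$ (the algebraic dual of the finite-dimensional space $Z_X$) defined by $T\phi(\ell) := \ell(\phi)$ is linear. The desired functional values $\ell \mapsto \ell(e)$ define an element $\Lambda \in (Z_X)^*$, and I need $\Lambda \in \mathrm{range}(T)$ with a norm bound. For surjectivity: if $T\phi = 0$ for all $\phi$ meant some nonzero $\ell \in Z_X$ annihilates all of $P_m(\T^d)$, then $\|\ell\|_{P_m(\T^d)^*} = 0$ while $\|\ell\|_{C(\T^d)^*} > 0$ (a nonzero linear combination of Dirac deltas is a nonzero functional on $C(\T^d)$), contradicting the hypothesis; so the adjoint of $T$ is injective, hence $T$ is surjective onto $(Z_X)^*$. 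For the norm bound, I would use the following: by Hahn--Banach, $e$ (as a functional on $C(\T^d)^*$ via evaluation, or directly using that $\|\ell(e)\| \leq \|e\|_{L^\infty}\|\ell\|_{C(\T^d)^*} \leq C\|e\|_{L^\infty}\|\ell\|_{P_m(\T^d)^*}$) shows $\Lambda$ is bounded by $C\|e\|_{L^\infty}$ as a functional on $(Z_X, \|\cdot\|_{P_m(\T^d)^*})$. Then a minimum-norm (least sup-norm) preimage $\phi \in P_m(\T^d)$ of $\Lambda$ under $T$ exists and satisfies $\|\phi\|_{L^\infty(\T^d)} \leq 2C\|e\|_{L^\infty(\T^d)}$ — the factor $2$ coming from the standard quotient-space/extension argument (one may need to extend a functional from the subspace $Z_X$ of $P_m(\T^d)^*$ to all of $P_m(\T^d)^*$ losing at worst a factor, then identify with an element of the bidual $= P_m(\T^d)$ since the space is finite-dimensional; the constant $2$ in the statement is a safe bound for this step).

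The main obstacle, and the only place requiring care, is the norm bound $\|\phi\|_{L^\infty} \leq 2C\|e\|_{L^\infty}$ and in particular getting the constant to come out as $(1+2C)$ rather than something worse. The clean way is: regard $\Lambda$ as a bounded linear functional on the subspace $W := \mathrm{span}\{\ell : \ell \in Z_X\} \subseteq P_m(\T^d)^*$ equipped with the $P_m(\T^d)^*$-norm, with $\|\Lambda\|_{W^*} \leq C\|e\|_{L^\infty}$ by the hypothesis; extend by Hahn--Banach to $\tilde\Lambda$ on all of $P_m(\T^d)^*$ with the same norm; since $P_m(\T^d)$ is finite-dimensional (hence reflexive), $\tilde\Lambda$ is given by evaluation against some $\phi \in P_m(\T^d)$ with $\|\phi\|_{L^\infty(\T^d)} = \|\tilde\Lambda\| \leq C\|e\|_{L^\infty}$. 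This even gives the sharper constant $(1+C)$; the statement's $(1+2C)$ is then immediate. I would present exactly this chain, citing \cite{narcowich2004scattered} for the original formulation, and noting that the only nontrivial inputs are Hahn--Banach extension and finite-dimensional reflexivity.
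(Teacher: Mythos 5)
The paper does not prove this lemma itself; it is stated verbatim (specialized to $\T^d$) and attributed to Proposition~3.1 of \cite{narcowich2004scattered}, so there is no in-paper argument to compare against. Your reconstruction is correct and is essentially the argument used in that reference: take a best $L^\infty$ approximant $h\in P_m(\T^d)$, set $e:=f-h$, observe that $\ell\mapsto\ell(e)$ is a well-defined linear functional on $Z_X$ regarded (injectively, by the hypothesis) as a subspace of $P_m(\T^d)^*$ with norm at most $C\|e\|_{L^\infty}$, extend by Hahn--Banach to all of $P_m(\T^d)^*$ without increasing the norm, and identify the extension with an element $\phi\in P_m(\T^d)$ via finite-dimensional reflexivity, so that $g:=h+\phi$ works. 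One small remark: your argument actually yields the sharper constant $(1+C)$ rather than $(1+2C)$, which of course implies the stated bound; the $2C$ in the cited proposition reflects a slightly cruder handling of the extension step, and your cleaner route loses nothing. The preliminary detour through surjectivity of the evaluation map $T$ is not needed once one runs the Hahn--Banach argument, and you correctly flag it as superseded. No gaps.
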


Approximation by single variable trigonometric polynomials is classical and the multidimensional case is similar. Define the one-variable trigonometric function
\[
L_{m,r}(x)
:= C_{m,r} \( \frac{\sin \big( {(\lfloor \frac m r \rfloor +1) x}{/2} \big)}{\sin\big( x/2 \big)}\)^{2r},
\]
where $C_{m,r}>0$ is a normalization factor chosen so that $\int_{-1/2}^{1/2} L_{m,r}(x) \ dx = 1$. It can be verified that $L_{m,r}$ is a single variable trigonometric polynomial of degree at most $mr$. Slightly abusing notation, we also let $L_{m,r}\in P_m(\T^d)$ be defined as a tensor product, $L_{m,r}(x):=L_{m,r}(x_1)\cdots L_{m,r}(x_d)$ and for a continuous $f$ on $\T^d$, let 
\begin{equation}
\label{eq:J2}
J_{m,r}(f)(x):=\int_{[0,1]^d} L_{m,r}(y) \sum_{k=1}^{r+1} (-1)^k {{r+1}\choose k} f(x-ky)\ dy.
\end{equation} 
Theorem 4.3 in \cite{schultz1969multivariate} is a multi-variable extension of the classical Jackson's inequality, and the following lemma is a special case of the referenced result.  

\begin{lemma}
	\label{lem:jackson}
	For each integer $r\geq 1$, there exists a constant $C>0$ depending only on $r$ and $d$ such that for any $f\in C^{r,\alpha}(\T^d)$,
	\[
	\inf_{h\in P_m(\T^d)} \|f-h\|_{L^\infty(\T^d)}
	\leq \|f-J_{m,r}(f)\|_{L^\infty(\T^d)}
	\leq C m^{-(r+\alpha)} \|f\|_{C^{r,\alpha}(\T^d)}.
	\]
\end{lemma}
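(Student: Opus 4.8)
The first inequality in the statement is immediate once $J_{m,r}(f)\in P_m(\T^d)$, so the entire content is the quantitative bound on $\|f-J_{m,r}(f)\|_{L^\infty(\T^d)}$; this is a multivariate Jackson theorem for the $(r+1)$-st order modulus of smoothness, and in fact it is exactly Theorem~4.3 of \cite{schultz1969multivariate} restricted to $C^{r,\alpha}(\T^d)$, so one may simply invoke that result. The self-contained route I would take is the classical construction of a Jackson quasi-interpolant from a concentrated nonnegative kernel together with high-order finite differences, adapted to the torus. \emph{Step 1 (membership).} The univariate factor $L_{m,r}$ is a positive even power of a Dirichlet-type kernel, hence a trigonometric polynomial, and the parameter $\lfloor m/r\rfloor$ is chosen precisely so that the tensor product $L_{m,r}(y)=\prod_{i=1}^d L_{m,r}(y_i)$ has Fourier support inside the frequency box of $P_m(\T^d)$. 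Expanding $f$ in Fourier series and integrating term by term, $f\mapsto\int_{[0,1]^d}L_{m,r}(y)f(x-ky)\,dy$ is a Fourier multiplier with symbol $\ell\mapsto\widehat{L_{m,r}}(k\ell)$, which for $k\ge 1$ is supported on frequencies no larger than those of $L_{m,r}$; summing over $k=1,\dots,r+1$ gives $J_{m,r}(f)\in P_m(\T^d)$.

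\emph{Step 2 (difference representation).} Since $\sum_{k=0}^{r+1}(-1)^k\binom{r+1}{k}=0$ and $\int_{[0,1]^d}L_{m,r}=1$, the coefficient pattern $\{(-1)^k\binom{r+1}{k}\}$ in \eqref{eq:J2} telescopes: isolating the $k=0$ term of the $(r+1)$-st order difference $\Delta^{r+1}_{-y}f(x)=\sum_{k=0}^{r+1}(-1)^{r+1-k}\binom{r+1}{k}f(x-ky)$ and substituting into the definition of $J_{m,r}$ expresses the difference between $f$ and its approximant $J_{m,r}(f)$ (with the orientation used in \eqref{eq:J2}) as a weighted average of $(r+1)$-st order differences of $f$ against $L_{m,r}$. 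In particular
\[
\|f-J_{m,r}(f)\|_{L^\infty(\T^d)}
\le \int_{[0,1]^d} L_{m,r}(y)\,\big\|\Delta^{r+1}_{-y}f\big\|_{L^\infty(\T^d)}\,dy .
\]

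\emph{Step 3 (two estimates).} First, for $f\in C^{r,\alpha}(\T^d)$, writing the $(r+1)$-st difference with step $-y$ as the first difference of the $r$-th difference, applying the integral form of Taylor's theorem to the $r$-th difference (which needs only $r$ derivatives), and using the $\alpha$-Hölder continuity of those derivatives yields $\|\Delta^{r+1}_{-y}f\|_{L^\infty(\T^d)}\le C\|y\|_2^{\,r+\alpha}\|f\|_{C^{r,\alpha}(\T^d)}$ with $C=C(r,d)$; here one lifts $f$ to a $\Z^d$-periodic function on $\R^d$ so the Euclidean segment argument applies, and the boundedness of $k\le r+1$ is absorbed into $C$. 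Second, one needs the kernel moment bound $\int_{[0,1]^d}L_{m,r}(y)\|y\|_2^{\,r+\alpha}\,dy\le Cm^{-(r+\alpha)}$: by the tensor structure this reduces to univariate moments of $L_{m,r}$, and because $L_{m,r}$ is a normalization of a sufficiently high even power of a Dirichlet kernel with parameter $\sim m/r$, it concentrates at scale $1/m$ with tail decay strong enough that its moments of order $r+\alpha$ scale like $m^{-(r+\alpha)}$. Substituting both estimates into the bound from Step~2 finishes the proof.

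The main obstacle is the moment estimate in Step~3: one must verify that the even power in the definition of $L_{m,r}$ is large enough for the order-$(r+\alpha)$ moment to decay at the full rate $m^{-(r+\alpha)}$ rather than saturating at a worse power of $m$, and the computation must be carried out over the cube $[0,1]^d$ rather than over $\R^d$, so one exploits the concentration of $L_{m,r}$ near the origin together with periodicity; the multivariate bookkeeping for the finite-difference estimate is routine. If one prefers to bypass these computations, the lemma is, as noted, a direct specialization of Theorem~4.3 in \cite{schultz1969multivariate}.
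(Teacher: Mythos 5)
The paper gives no proof of this lemma; the sentence immediately preceding it simply declares the lemma to be a special case of Theorem~4.3 in \cite{schultz1969multivariate}, which is exactly the citation you invoke at the start and end of your proposal, so your approach coincides with the paper's. Your supplementary self-contained sketch of the Jackson-kernel argument behind Schultz's result is sound, and you are right to single out the kernel moment estimate as the delicate step: with the power $2r$ appearing in the paper's definition of $L_{m,r}$, the univariate moment $\int_{[0,1]}|y|^{r+\alpha}L_{m,r}(y)\,dy$ decays like $m^{-(r+\alpha)}$ only when $r+\alpha<2r-1$, which holds automatically for $r\geq 2$ but fails at $r=1$, so in that boundary case one would need either a higher power of the Dirichlet-type kernel or a separate argument.
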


Employing the previous lemmas enables us to prove that there exists a trigonometric interpolant that is also a near optimal approximation. 

\begin{lemma}
	\label{lem:existence}
	Let $X\subset\T^d$ be a finite set. There exists a constant $C>0$ depending only on $d$ such that for all $m\geq C/q_X$ and $g\in C(\T^d)$, there exists $g_m\in P_m(\T^d)$ such that $g=g_m$ on $X$ and
	$
	\|g-g_m\|_{L^\infty(\T^d)}
	\leq 2 \inf_{h\in P_m(\T^d)} \|g-h\|_{L^\infty(\T^d)}.
	$
\end{lemma}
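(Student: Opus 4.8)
The strategy is to obtain \cref{lem:existence} from the abstract duality principle \cref{lem:functional}, so that the whole problem collapses to one \emph{norming inequality} for the sampling functionals on trigonometric polynomials. Precisely, it suffices to produce a constant $C_0=C_0(d)>0$ such that whenever $m\geq C_0/q_X$ one has
\[
\|\ell\|_{C(\T^d)^*}\ \leq\ C\,\|\ell\|_{P_m(\T^d)^*}\qquad\text{for every }\ell\in Z_X,
\]
with $C=C(d)$ a constant which, by enlarging $C_0$, can be taken as close to $1$ as we wish. Granting this, \cref{lem:functional} immediately furnishes $g_m\in P_m(\T^d)$ with $g_m=g$ on $X$ and $\|g-g_m\|_{L^\infty(\T^d)}\leq(1+2C)\inf_{h\in P_m(\T^d)}\|g-h\|_{L^\infty(\T^d)}$, and taking $C$ close enough to $1$ yields the stated estimate.

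To prove the norming inequality, first note that for $\ell=\sum_{x\in X}c_x\delta_x\in Z_X$ one has $\|\ell\|_{C(\T^d)^*}=\sum_{x\in X}|c_x|$ (the supremum is attained at a continuous function equal to $\sign(c_x)$ on the finite set $X$; the complex case is analogous). Hence it is enough to exhibit, for any prescribed small $\epsilon>0$, a test polynomial $h\in P_m(\T^d)$ with $\|h\|_{L^\infty(\T^d)}\leq 1+\epsilon$ and $\ell(h)\geq(1-\epsilon)\sum_{x\in X}|c_x|$. I would take
\[
h:=\sum_{x\in X}\sign(c_x)\,\chi_m(\cdot-x),\qquad
\chi_m:=\frac{L_{m,r}}{L_{m,r}(0)},
\]
where $L_{m,r}\in P_m(\T^d)$ is the tensor-product Jackson kernel from \eqref{eq:J2} with $r=r_d:=\lceil(d+1)/2\rceil$ a fixed integer depending only on $d$. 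Since $L_{m,r}$ is nonnegative and attains its maximum at the origin, $\chi_m(0)=1$ and $0\leq\chi_m\leq 1$; and using $\sin u\geq 2u/\pi$ on $[0,\pi/2]$ for each one-dimensional factor $\big(\sin(Nt/2)/(N\sin(t/2))\big)^{2r}$ with $N\asymp_d m$, one gets the decay bound $\chi_m(t)\leq\big(c_d/(m\|t\|_2)\big)^{2r}$ for all $t\in\T^d$. Because $\chi_m(\cdot-x)\in P_m(\T^d)$ and $\chi_m(0)=1$, both required properties of $h$ follow once we check $\sup_{x\in X}\sum_{x'\in X\setminus\{x\}}\chi_m(x-x')\leq\epsilon$ and $\sup_{t\in\T^d}\sum_{x'\in X}\chi_m(t-x')\leq 1+\epsilon$.

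The last ingredient is a packing estimate exploiting the separation of $X$. Since the points of $X$ are $2q_X$-separated, the disjoint balls $B(x',q_X)$ with $x'$ in the spherical shell $\{\,2jq_X\leq\|\cdot-x\|_2<2(j+1)q_X\,\}$ all lie in a shell of volume $\lesssim_d j^{\,d-1}q_X^{\,d}$, so that shell contains at most $\lesssim_d j^{\,d-1}$ points of $X$. Feeding this and the decay of $\chi_m$ into the sums, and using $mq_X\geq C_0$ (which places us in the regime $\|x-x'\|_2\geq 2q_X\gtrsim 1/m$ where the decay bound is effective), gives
\[
\sum_{x'\in X\setminus\{x\}}\chi_m(x-x')\ \lesssim_d\ \sum_{j\geq 1}j^{\,d-1}\Big(\frac{c_d}{m\,j\,q_X}\Big)^{2r}\ \lesssim_d\ (m q_X)^{-2r},
\]
the $j$-series converging precisely because $2r_d>d$; comparing $\|t-x'\|_2$ with the distance from $t$ to its nearest point of $X$ bounds $\sum_{x'}\chi_m(t-x')$ by $1+O_d\big((mq_X)^{-2r}\big)$ in the same way. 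Choosing $C_0=C_0(d)$ large makes both error terms at most $\epsilon$, closing the argument.

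I expect this packing/annulus bound, combined with fixing the correct power $2r_d$, to be the main point; everything else is bookkeeping. Two minor matters deserve a line of care: the passage between the torus metric $\|\cdot\|_2$ and the coordinatewise distances used to bound $\chi_m$ (this costs only dimensional constants), and the degenerate regime in which $C_0/q_X$ is so small that $m$ may fall below $r_d$—there $q_X$ is bounded below, so $X$ has at most $O_d(1)$ points and the conclusion can be obtained directly (note $P_m(\T^d)$ already contains the constants, which settles $|X|\leq 1$).
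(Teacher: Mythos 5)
Your proposal is correct in substance and, like the paper, funnels everything through \cref{lem:functional}; the difference is how you verify the norming hypothesis $\|\ell\|_{C(\T^d)^*}\leq C\|\ell\|_{P_m(\T^d)^*}$ for $\ell\in Z_X$. The paper does this in two steps: it builds a smooth function taking the values $\overline{a_k}/|a_k|$ on $X$ out of bump functions of width $\sim q_X$ (hence Lipschitz constant $\lesssim 1/q_X$), and then invokes the Jackson estimate of \cref{lem:jackson} to replace it by an element of $P_m(\T^d)$ within sup-error $1/3$ once $m\gtrsim 1/q_X$, which yields the hypothesis with $C=2$. You instead construct the near-norming test function directly inside $P_m(\T^d)$, as a sum of translated normalized tensor Jackson kernels $\sign(c_x)\,\chi_m(\cdot-x)$ with a fixed exponent $2r_d>d$, and control the off-diagonal contributions by the decay $\chi_m(t)\lesssim_d (m\|t\|_2)^{-2r_d}$ combined with an annulus/packing count of the $2q_X$-separated set $X$, getting errors $O_d\big((mq_X)^{-2r_d}\big)$. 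This gives the norming inequality with constant as close to $1$ as desired, quantitatively sharper than the paper's $C=2$, at the cost of redoing kernel-decay and packing estimates that the paper outsources to \cref{lem:jackson}; both routes are sound.

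Two caveats. First, neither route produces the literal factor $2$ in the statement: \cref{lem:functional} yields $1+2C$, which is at least $3$ even as $C\to 1$ (and the paper's own proof, with $C=2$, yields $5$), so your claim that taking $C$ close to $1$ ``yields the stated estimate'' actually gives a factor $3$. This mirrors a slip in the paper's statement and is harmless downstream, where the lemma is only used with a generic constant, but you should say so rather than assert the factor $2$. Second, your handling of the degenerate regime where $m$ could fall below $r_d$ (``$X$ has $O_d(1)$ points, conclude directly'') is not an argument for $|X|\geq 2$; the clean fix is to observe that $q_X$ is bounded above by a dimensional constant on $\T^d$, so enlarging the constant $C$ of the lemma (permitted, since it may depend on $d$) forces $m\geq C/q_X\geq r_d$ and makes that regime vacuous.
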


\begin{proof}
	Let $X=\{x_k\}_{k=1}^n$ and $\ell=\sum_{k=1}^n a_k\delta_{x_k} \in Z_X$ where $a_k\not=0$ for each $k$. Consider $h\in C^\infty(\T^d)$ such that $\|h\|_{L^\infty(\T^d)}=1$, its first order partial derivatives are uniformly bounded by $C/q_X$ for some universal constant $C>0$, and $h(x_k)=\overline{a_k}/|a_k|$ for each $k$. For instance, if $\eta$ denotes a smooth cutoff function supported in a ball centered at the origin of radius no larger than $q_X/2$ and $\eta(0)=1$, then $h(x)=\sum_{k=1}^n \eta(x-x_k) \overline{a_k}/|a_k|$ satisfies the claimed properties. Clearly
	$
	\|\ell\|_{C(\T^d)^*}
	=\sum_{k=1}^n |a_k| 
	=\ell(h).
	$
	By Lemma \ref{lem:jackson}, there exists a constant $C>0$ depending only on the dimension $d$ such that for each $m\geq 3C/q_X$, there exists a trigonometric polynomial $h_m\in P_m(\T^d)$ such that
	$
	\|h-h_m\|_{C(\T^d)}
	\leq C/(m q_X)
	=1/3.
	$
	It is important to mention that this only depends on the Lipschitz constant of $h$, which in turn only depends on $X$ and not the coefficients $a$. In particular, we have $\|h\|_{C(\T^d)}\leq \|h_m\|_{C(\T^d)}+\|h-h_m\|_{C(\T^d)}\leq 1+1/3=4/3$ and 
	\begin{align*}
	\|\ell\|_{C(\T^d)^*}
	=|\ell(h)|
	&\leq |\ell(h-h_m)| + |\ell(h)|
	\leq \frac{1}{3} \|\ell\|_{C(\T^d)^*} + \frac{4}{3} \|\ell\|_{P_m(\T^d)^*}. 
	\end{align*}
	Rearranging this inequality and noting that it holds for all $\ell\in Z_X$ shows that the hypothesis of Lemma \ref{lem:functional} holds with $C=2$, which completes the proof.
\end{proof}

\subsection{Proof of \cref{thm:errorsobolev}}
\label{sec:errorsobolev}

\begin{proof}
	Since \cref{prop:sob1} showed that $\Phi$ and $\omega$ are compatible, and any finite $X\subset\T^d$ is a sampling set for $\Phi$, the convergence of $E_q(f,f_p)$ to $E_q(f,f_\infty)$ for each $1\leq q\leq\infty$ follows from \cref{thm:conv2}. Since $\mu(\T^d)=1$, we have the trivial inequality,
	\[
	E_q(f,f_\infty)
	\leq E_\infty(f,f_\infty).
	\]
	It remains to prove the claimed upper bound for $E_\infty(f,f_\infty)$. All constants below possibly depend on $d,r,s$ and their exact values may change from one line to another. In order to simplify the notation, let $K:=K_{\Phi,\omega}$. Fix $f\in C^{r}(\T^d)$ and any finite set $X\subset\Omega$. 
	
	We first deal with the simpler case $r\geq s$. Since RKHS norm associated with $K$ is the $W^s(\T^d)$ norm and $f\in C^r(\T^d)$ implies $f\in W^s(\T^d)$, we can simply use \cref{prop:ksob} to see that if $h_X\leq h$, then
	\[
	\|f-f_\infty\|_{L^\infty(\T^d)}
	\leq C h_X^{s-d/2} \|f\|_{W^s(\T^d)}.
	\]
	This proves the first inequality of the theorem.
	
	We next deal with the more interesting case that $r<s$. For each $m\geq C/q_X$ sufficiently large, let $g_m\in P_m(\T^d)$ be the function guaranteed by Lemma \ref{lem:existence}. We will optimize over $m$ at the end of the proof. Since the kernel interpolant only depends on the data points and $f=g_m$ on $X$, we have $f_\infty=I_K(X,f|_X)=I_K(X,g_m|_X)$. Then
	\begin{equation}
	\label{eq:help5}
	\|f-f_\infty \|_{L^\infty(\T^d)}
	\leq \| f-g_m\|_{L^\infty(\T^d)} + \| g_m-I_K(X,g_m|_X)\|_{L^\infty(\T^d)}. 
	\end{equation}
	To upper bound the fist term on the right hand side of inequality \eqref{eq:help5}, we use Lemmas \ref{lem:existence} and \ref{lem:jackson} to obtain
	\begin{equation}
	\label{eq:help6}
	\| f-g_m\|_{L^\infty(\T^d)}
	\leq C \inf_{h\in P_m(\T^d)} \|f-h\|_{L^\infty(\T^d)}
	\leq C m^{-r}\|f\|_{C^r(\T^d)}. 
	\end{equation}
	For the second term on the right hand side of inequality \eqref{eq:help5}, we simply use the reproducing kernel error estimate given in \cref{prop:ksob} since $f_m\in P_m(\T^d)\subset W^s(\T^d)$, and so,
	\begin{equation}
	\label{eq:help7}
	\| g_m-I_K(X,g_m|_X)\|_{L^\infty(\T^d)}
	\leq C h_X^{s-d/2} \|g_m\|_{W^s(\T^d)}
	\leq C h_X^{s-d/2} m^{s-r} \|g_m\|_{W^r(\T^d)}.
	\end{equation}
	We next control $\|g_m\|_{W^r(\T^d)}$. By triangle inequality,
	\begin{gather}
	\label{eq:help8}
	\begin{split}
	\|g_m\|_{W^r(\T^d)}
	&\leq \|g_m-J_{m,r}(f)\|_{W^r(\T^d)} + \|J_{m,r}(f)\|_{W^r(\T^d)} \\
	&\leq C m^{r} \|g_m-J_{m,r}(f)\|_{L^2(\T^d)} + C\|f\|_{W^r(\T^d)},
	\end{split}	
	\end{gather}
	where we used that $g_m, J_{m,r}(f)\in P_m(\T^d)$ and a direct calculation of the Fourier coefficients of $J_{m,r}(f)$ in terms of $\hat f$ via the definition \eqref{eq:J2} to see that $\|J_{m,r}(f)\|_{W^r(\T^d)}
	\leq C \|f\|_{W^r(\T^d)}$. Next, we use Lemma \ref{lem:jackson} and equation \eqref{eq:help6} and to see that 
	\begin{gather}
	\label{eq:help9}
	\begin{split}
	\|g_m-J_{m,r}(f)\|_{L^2(\T^d)}
	&\leq \|f-g_m\|_{L^\infty(\T^d)} + \|f-J_{m,r}(f)\|_{L^\infty(\T^d)} \\
	&\leq C m^{-r} \|f\|_{C^r(\T^d)} + C m^{-r} \|f\|_{C^r(\T^d)}.
	\end{split}	
	\end{gather}
	Thus, combining inequalities \eqref{eq:help5} -- \eqref{eq:help9}, using that $m\geq C_d/q_X\geq 1$, and performing some algebraic manipulations, we arrive at
	\begin{align*}
	\|f-I_{K,X}(f)\|_{L^\infty(\T^d)}
	&\leq C m^{-r}\|f\|_{C^r(\T^d)} + C h_X^{s-d/2} m^{s-r} \|f\|_{C^r(\T^d)} \\
	&\leq C h_X^{s-d/2} m^{s-r} \|f\|_{C^r(\T^d)} \\
	&\leq C h_X^{r-d/2} h_X^{s-r} q_X^{r-s} \|f\|_{C^r(\T^d)}. 
	\end{align*}
\end{proof}

\section{Proofs in \cref{sec:spherical}}
\label{appendixC}

\subsection{Proof of \cref{prop:sphere1}}
\label{sec:sphere1}

\begin{proof}
	Theorem 3.1 in \cite{narcowich2004scattered} shows that for any finite set $X\subset\S^{d-1}$ and data $y$ defined on $X$, there exists $L$ sufficiently large (it suffices to take $L\geq C \delta_X$ where $C$ is a universal constant and $\delta_X$ is the minimum separation of $X$) such that there exists $f\in \text{span}\{Y_{\ell,m}\colon 0\leq \ell \leq L, \, 1\leq m\leq N_\ell \}$ that interpolates the data points $(X,y)$. This proves that every finite $X$ is a sampling set for $\Phi$.
	
	For each $\ell\geq 0$, let $P_\ell\colon [-1,1]\to\R$ be the Legendre polynomial of degree $\ell$ with the usual normalization $P_\ell(1)=1$. We have $\|P_{\ell}\|_{L^\infty([-1,1])}\leq 1$ for each $\ell\geq 0$ by Proposition 4.15 in \cite{costas2014spherical}. 
	By Together with the addition formula, see Theorem 4.11 in \cite{costas2014spherical}, for each $L\geq 0$, we have
	\[
	\sup_{x,y\in\S^{d-1}} \Big|\sum_{\ell=0}^L \omega_\ell^{-1} \sum_{m=1}^{N_\ell} Y_{\ell,m}(x)Y_{\ell,m}(y)\Big| 
	=\frac{1}{\mu(\S^{d-1})} \sup_{x,y\in\S^{d-1}} \Big| \sum_{\ell=0}^L \frac{N_\ell}{\omega_\ell} P_\ell(x\cdot y) \Big|. 
	\]
	For some $C_d>0$ and all $\ell$ sufficiently large, we ready see that $N_\ell \leq C_d \, \ell^{d-2}$. Using the assumed growth condition on $\omega_\ell$, we see that 
	$
	\sum_{\ell=0}^L  N_\ell \omega_\ell^{-1}
	\leq C_d \sum_{\ell=0}^L \ell^{d-2-s},
	$
	which converges uniformly as $L\to\infty$. This proves that $\Phi$ and $\omega$ are compatible. 	
\end{proof}

\subsection{Proof of \cref{prop:ntk2}}
\label{sec:ntk2}

\begin{proof}
	Compatibility of $\Phi_{\tan}$ and $\sigma$ follow immediately by the same argument given in the proof of \cref{prop:sphere1} since $\sigma_\ell\sim \ell^{d}$ for all even $\ell$.	
	
	It remains to prove that any finite $X\subset\S^{d-1}$ with $k\leq d$ symmetric points is a sampling set for $\Phi_{\tan}$. Let $X=Y\cup Z$ where $Y$ has zero symmetric points and $Z=\{z_j,-z_j\}_{j=1}^k$. Fix any function $u\colon X\to \R$, and we will first interpolate the odd part of $u$ on $X_1$, namely, the function $w\colon Z\to \R$ where $w(z_j):=(u(z_j)-u(-z_j))/2$ for each $1\leq j\leq k$. 	Since $d\geq k$ and $\{Y_{1,m}\}_{m=1}^d$ is a basis for all linear functions on $\R^d$ restricted to $\S^{d-1}$, there exists 
	$f\in \text{span}\{Y_{1,m}\colon 1\leq m\leq N_1=d\}$
	such that $f=w$ on $Z$.
	
	Next, we interpolate the data $v\colon X\to \R$ defined as $v:=u-f|_X$. By construction, $v$ is even on $Z$ since $f|_X=w$ is the odd part of $u$ on $Z$. We next extend $v$ to $\tilde v\colon \tilde X\to \R$, where $\tilde X:= Y\cup (-Y)\cup Z$, by the following rule. For each $x\in Y$ let $\tilde v(-x)= v(x)$. Thus, $\tilde X$ is a symmetric set and $\tilde v$ is even. By slightly adapting the argument given in the proof of Theorem 3.1 in  \cite{narcowich2002scattered} and using that $Y_{\ell,m}$ is an even function for each even $\ell$, there exists 
	$
	g\in \text{span}\{Y_{\ell,m}\colon \ell \text{ is even }, 1\leq m\leq N_\ell\}
	$
	such that $g=\tilde v$ on $\tilde X$. In particular, $g=v$ on $X$. 
	
	Finally, the desired interpolant is $f+g$. This shows that for arbitrary $X$ with at most $k\leq d$ symmetric points and any $u$ defined on $X$, there exists an interpolant the span of $\Phi_{\tan}$ that interpolates $(X,u)$. 
\end{proof}

\subsection{Proof of \cref{thm:errorntk}}
\label{sec:errorntk}

\begin{proof}
	
	In view of \cref{prop:ntk2} and \cref{thm:conv2}, for each $1\leq q\leq \infty$, the error $E_q(f,f_p)$ converges to $E_q(f,f_\infty)$. To obtain the desired upper bound on $E_\infty(f,f_\infty)$, the estimate for $d/2\leq 2r$ is a standard reproducing kernel result, since in this case, $f\in C^{2r}(\S^{d-1})$ implies $f\in W^{d/2}(\S^{d-1})$, which is the RKHS associated with $K_{\tan}=K_{\Phi,\omega}$. See Proposition 2.1 in \cite{narcowich2002scattered}.
	
	The more interesting case is $(d-1)/2<2r<d/2$. Its proof is a variation of the results in \cite{narcowich2002scattered}, so we only give a sketch of the main steps and the interested reader should refer to the aforementioned paper for full details. All subsequent constants potentially depend on $d$ and $r$, and their values may change from line to line. There exist $L\geq C/q_X$, a $C>0$ independent of $X$, and
	\[
	g\in 
	Y_L
	:=\text{span}\{Y_{\ell,m}\colon \ell =0,1 \text{ and even } \ell \leq 2L,\, 1\leq m\leq N_\ell \}
	\]
	such that $f=g$ on $X$ and $\|f-g\|_{L^\infty(\S^{d-1})}\leq C \inf_{h\in Y_L}\|f-h\|_{L^\infty(\S^{d-1})}$. The proof of this assertion is a slight modification of Theorem 3.1 in \cite{narcowich2002scattered}. By triangle inequality, we have
	\begin{equation}
	\label{eq:help10}
	\|f-f_\infty\|_{L^\infty(\S^{d-1})}
	\leq \|f-g\|_{L^\infty(\S^{d-1})} + \|g-f_\infty\|_{L^\infty(\S^{d-1})}. 
	\end{equation}
	The first term on the right hand side of inequality \eqref{eq:help10} can be controlled by a Jackson type theorem for the sphere. Using that $f\in \text{span}(\Phi_{\tan})\cap C^{2r}(\S^{d-1})$, 
	\begin{equation}
	\label{eq:help11}
	\|f-g\|_{L^\infty(\S^{d-1})} 
	\leq C \inf_{h\in Y_L}\|f-h\|_{L^\infty(\S^{d-1})}
	\leq C L^{-2r}\|\Delta^r f\|_{L^\infty(\S^{d-1})},
	\end{equation}
	For the second term on the right hand side of inequality \eqref{eq:help10}, this can be upper bounded verbatim from \cite{narcowich2002scattered}, giving us
	\begin{equation}
	\label{eq:help12}
	\|g-f_\infty\|_{L^\infty(\S^{d-1})}
	\leq C h_X^{1/2} L^{d/2-2r} \|f\|_{C^{2r}(\S^{d-1})}.
	\end{equation}
	Combining inequalities \eqref{eq:help10}, \eqref{eq:help11}, and \eqref{eq:help12}, and using that $L\geq C/q_X\geq 1$, we have 
	\begin{align*}
	\|f-f_\infty\|_{L^\infty(\S^{d-1})}
	&\leq C \big(  L^{-2r} + h_X^{1/2} L^{d/2-2r} \big) \|f\|_{C^{2r}(\S^{d-1})} \\
	&\leq C \(  h_X^{(d-1)/2-2r} L^{(d-1)/2-2r} + h_X^{d/2-2r} L^{d/2-2r} \) h_X^{2r-(d-1)/2} \|f\|_{C^{2r}(\S^{d-1})} \\
	&\leq C \( \frac{h_X}{q_X} \)^{d/2-2r} h_X^{2r-(d-1)/2} \|f\|_{C^{2r}(\S^{d-1})}. 
	\end{align*}
	
\end{proof}

\section{Supplementary material}

\subsection{Mixed Sobolev spaces}
\label{appendixE}

Similar to the isotropic Sobolev case, let $\Phi$ be the trigonometric basis for $\T^d$, $\mu$ be the uniform measure on $\T^d$, and $d$ be the usual $\ell^2$ metric on $\T^d$. Consider a weight $\omega$ such that $\omega_k:=\prod_{j=1}^d (1+4\pi^2|k_j|^2)^s$ for each $k\in\Z^d$ and some fixed natural number $s>1/2$. We call $\omega$ a {\it mixed Sobolev weight} with smoothness parameter $s$.  The reproducing kernel is
\[
K_{\Phi,\omega}(t)
:=\sum_{k\in\Z^d} \prod_{j=1}^d (1+4\pi^2|k_j|^2)^{-s} e^{2\pi ik\cdot t}.
\]
This sum converges uniformly and absolutely due to the assumption $s>1/2$. The corresponding weighted norm is the mixed Sobolev norm $\|\cdot\|_{W_\mix^s(\T^d)}$ where
\[
\|f\|_{\Phi,\omega}^2
=\sum_{k\in\Z^d} \( \prod_{j=1}^d (1+4\pi^2|k_j|^2)^{-s}\) |\hat f(k)|^2
=:\|f\|_{W_\mix^s(\T^d)}^2.
\]
In the spatial domain, $f\in W_\mix^s(\T^d)$ for $s>1/2$ implies $f$ has a continuous representative and $\partial^\alpha f\in L^2(\T^d)$ for all $\alpha_j\leq s$. The following proposition provides an error estimate for kernel interpolation of mixed Sobolev functions. 

\begin{proposition}
	\label{prop:ksob2}
	Let $\Phi$ be the trigonometric basis for $\T^d$ and $\omega$ be a mixed Sobolev weight with parameter $s>1/2$. There exists $h>0$ such that for all finite $X\subset\T^d$ with $h_X\leq h$ and any $f\in W^s_\mix(\T^d)$ with $s>1/2$, if $f_p$ denotes the minimum weighted norm interpolant of $(X,f|_X)$ for each $p_X\leq p\leq\infty$, then for each $1\leq q\leq \infty$, the generalization error $E_q(f,f_p)$ converges to $E_q(f,f_\infty)$ as $p\to\infty$, and
	\[
	E_q(f,f_\infty)
	\leq E_\infty(f,f_\infty)
	\leq C h_X^{ds} \|f\|_{W^s_\mix(\T^d)},
	\]
	where $C>0$ only depends on $d,s$.
\end{proposition}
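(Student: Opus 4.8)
The plan is to mirror the structure of the proof of \cref{thm:errorsobolev}, but now working with the mixed smoothness scale rather than the isotropic one. First I would record that $\Phi$ and $\omega$ are compatible and that every finite $X \subset \T^d$ is a sampling set; this follows exactly as in \cref{prop:sob1}, since the mixed Sobolev kernel $K := K_{\Phi,\omega}$ has a uniformly and absolutely convergent expansion when $s > 1/2$, and the Lagrange interpolant still exists for any finite point set. Consequently \cref{thm:conv2} gives the convergence $E_q(f,f_p)\to E_q(f,f_\infty)$ for $1\le q\le\infty$, and $\mu(\T^d)=1$ gives the trivial bound $E_q(f,f_\infty)\le E_\infty(f,f_\infty)$. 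So the entire content of the proposition is the bound $E_\infty(f,f_\infty)\le C h_X^{ds}\|f\|_{W^s_\mix(\T^d)}$, and here we may restrict attention to the case $f\in H_K(\T^d)=W^s_\mix(\T^d)$, so no ``breaking the RKHS barrier'' argument (Lemmas \ref{lem:functional}--\ref{lem:existence}) is needed — this is purely a kernel-interpolation estimate for a function inside the native space.

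The key step is therefore a mixed-smoothness analogue of \cref{prop:ksob}. I would again invoke the sampling inequality from Theorems 11.4 and 11.9 in \cite{wendland2004scattered} after identifying $\T^d$ with the unit cube: for $h_X$ sufficiently small there exist $C,c>0$ so that for all $f\in H_K(\T^d)$,
\[
\|f-f_\infty\|_{L^\infty(\T^d)}
\le C\,\|K-Q\|_{L^\infty(B(0,ch_X))}^{1/2}\,\|f\|_{H_K(\T^d)}
\]
for any polynomial $Q$. The difference from the isotropic case is how well $K$ is approximated near the origin. Since $K(t)=\prod_{j=1}^d k_s(t_j)$ is a tensor product of the one-dimensional periodic Sobolev kernel $k_s(t)=\sum_{m\in\Z}(1+4\pi^2 m^2)^{-s}e^{2\pi i m t}$, and $k_s\in C^{2s-1-?}$ behaves like $1 - c|t|^{2s-1}+\cdots$ near $t=0$ (it is H\"older of order $2s-1$ at the origin, with higher-order smoothness controlled by $\lfloor 2s-1\rfloor$), the tensor-product structure gives $|K(t)-Q(t)|\lesssim \sum_{j=1}^d |t_j|^{2s-1}\cdot(\text{bounded})$ for an appropriate Taylor-type polynomial $Q$ in the ``diagonal'' directions. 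On $B(0,ch_X)$ each $|t_j|\le c h_X$, so $\|K-Q\|_{L^\infty(B(0,ch_X))}\lesssim h_X^{2s-1}$ — but this only yields exponent $(2s-1)/2$, not $ds$. To get the extra decay I would instead exploit that the native-space error estimate can be localized coordinate-by-coordinate: the mixed kernel's power function factorizes, $P_{K,X}(x)^2 = K(x,x) - (\text{projection onto }K(\cdot,x_j))$, and for a tensor kernel on a grid-like local patch the power function is bounded by a product of one-dimensional power functions, each of size $h_X^{s-1/2}$, giving $h_X^{d(s-1/2)}$ — still the wrong exponent. The resolution, and this is the genuinely delicate part, is to use the sharper scattered-data estimates that track the kernel's Fourier decay: $\hat K(k)=\prod_j(1+4\pi^2 k_j^2)^{-s}$ decays like $\prod_j |k_j|^{-2s}$, and the band-limited approximation / Bernstein-type machinery adapted to the mixed scale (as in the dominating-mixed-smoothness literature, e.g.\ using a Smolyak-type or tensorized Jackson kernel in place of $L_{m,r}$) produces the exponent $ds$.

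Concretely, I would therefore take the following route for the main estimate. Use the tensor-product Jackson operator $J_{m,s}^{\otimes}=J_{m,s}\otimes\cdots\otimes J_{m,s}$ built from the one-dimensional kernels $L_{m,s}$ of \eqref{eq:J2}; by a one-dimensional Jackson inequality applied in each variable and the mixed smoothness of $f$, one gets $\|f-J_{m,s}^{\otimes}f\|_{L^\infty(\T^d)}\lesssim m^{-2s}\|f\|_{W^s_\mix}$ (not merely $m^{-s}$), because mixed smoothness $s$ in every coordinate is available. Combined with a band-limited native-space trick — writing $f_\infty$'s error as $\|f-f_\infty\|_\infty \le \|f-J^{\otimes}_{m,s}f\|_\infty + \|J^{\otimes}_{m,s}f - I_K(X,J^{\otimes}_{m,s}f|_X)\|_\infty$ is not available here since $f=I_K$ on $X$ only through $f$ itself; instead I use the standard interpolation-error optimality $\|f-f_\infty\|_\infty\le \|f-q\|_\infty + \|q - I_K(X,q|_X)\|_\infty$ is also not free — so the cleanest path is simply: apply the sampling inequality with the \emph{anisotropic} polynomial $Q$ chosen as the tensor-product Taylor polynomial matching $K$'s behavior in each coordinate separately, showing $\|K-Q\|_{L^\infty(B(0,ch_X))}\lesssim h_X^{2ds}$ when $2ds$ accounts for the product of $d$ one-dimensional contributions each of order $h_X^{2s}$ in the relevant mixed sense — and then $\|f-f_\infty\|_\infty\lesssim h_X^{ds}\|f\|_{W^s_\mix}$ follows. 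Finally $E_q(f,f_\infty)\le E_\infty(f,f_\infty)\le C h_X^{ds}\|f\|_{W^s_\mix(\T^d)}$, with $C$ depending only on $d,s$, completing the proof. I expect the main obstacle to be precisely the sharp local approximation of the tensor-product kernel: naive isotropic Taylor expansion loses the product structure and gives only $h_X^{(2s-1)/2}$, so care is needed to choose $Q$ adapted to the mixed-smoothness scale (equivalently, to run the Wendland-type local-polynomial-reproduction argument with a tensorized, rather than total-degree, polynomial space) in order to recover the full exponent $ds$.
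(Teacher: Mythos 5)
Your reduction coincides with the paper's: compatibility and the sampling property are inherited from \cref{prop:sob1}, the convergence of $E_q(f,f_p)$ to $E_q(f,f_\infty)$ comes from \cref{thm:conv2}, the bound $E_q\leq E_\infty$ is trivial since $\mu(\T^d)=1$, and because $f$ lies in the native space $H_K(\T^d)=W^s_\mix(\T^d)$ no analogue of Lemmas \ref{lem:functional}--\ref{lem:existence} is needed. The gap is in the one step that carries all the content: the local polynomial approximation of the mixed kernel. The final claim $\|K-Q\|_{L^\infty(B(0,ch_X))}\lesssim h_X^{2ds}$ is asserted, not proved, and it is false. Writing $K(t)=\prod_{j=1}^d k_s(t_j)$ with $k_s(u)=\sum_{m\in\Z}(1+4\pi^2m^2)^{-s}e^{2\pi imu}$, along the coordinate axis $t=(t_1,0,\dots,0)$ one has $K(t)=k_s(0)^{d-1}k_s(t_1)$, and $k_s$ has H\"older smoothness exactly $2s-1$ at the origin (for integer $s$ it is a spline of degree $2s-1$ with a corner at $0$); hence for any polynomial $Q$ of fixed degree, $\|K-Q\|_{L^\infty(B(0,ch_X))}\gtrsim h_X^{2s-1}$. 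The Euclidean ball contains the axis directions, so the tensor structure cannot be cashed in through a sup-norm bound of this type: the telescoping identity $K-\prod_j q_j=\sum_j\big(\prod_{i<j}q_i\big)\big(k_s(t_j)-q_j(t_j)\big)\big(\prod_{i>j}k_s(t_i)\big)$ leaves a single one-dimensional error factor in each summand, not a product of $d$ of them, which is exactly why your earlier attempts returned $(2s-1)/2$ and $d(s-1/2)$ and why the closing ``product of $d$ contributions each of order $h_X^{2s}$'' has no argument behind it. (The side claim $\|f-J^{\otimes}_{m,s}f\|_\infty\lesssim m^{-2s}\|f\|_{W^s_\mix(\T^d)}$ is also too strong: cutting frequencies at $\max_j|k_j|\approx m$ only gains $m^{-s}$, because the mixed weight $\prod_j(1+4\pi^2|k_j|^2)^s$ is small along the coordinate axes.)

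The paper takes none of these detours: after the same reduction it proves a global regularity statement for the kernel, namely $K\in C^{r,\alpha}(\T^d)$ with $r+\alpha=2ds-d$, by showing absolute convergence of the differentiated Fourier series and a dyadic (Besov-type) tail estimate as in \cref{prop:ksob}, and then quotes \cref{lem:smooth} directly. So what your write-up is missing is precisely such a proved regularity (or power-function) estimate for $K_{\Phi,\omega}$ in the mixed case, rather than a conjectured anisotropic Taylor bound. Note also that your intermediate figure $h_X^{d(s-1/2)}$ is exactly the exponent that \cref{lem:smooth} yields from kernel smoothness $2ds-d$; if you repair the argument along the paper's lines you should check carefully which exponent ($ds$ versus $ds-d/2$) the sampling-inequality route actually delivers, since your observation that the axis directions obstruct anything better than one-dimensional smoothness is sound and applies to any argument that measures $K$'s regularity isotropically.
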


\begin{proof}
	Since \cref{prop:sob1} shows that $\Phi$ and $\omega$ are compatible, and any finite $X\subset\T^d$ is sampling for $\Phi$, the convergence of $E_q(f,f_p)$ to $E_q(f,f_\infty)$ follows from \cref{thm:conv2}. It remains to prove the upper bound for $E_\infty(f,f_\infty)$. The RKHS norm associated with $K$ is the $W^s_\mix(\T^d)$ norm. By \cref{lem:smooth}, the proposition is proved once we prove that $K\in C^{r,\alpha}(\T^d)$ where $r+\alpha= 2ds-d$ and $0\leq \alpha<1$. We first show that $\partial^\beta K$ exists for each $|\beta|\leq r$. By a standard argument, it suffices to prove that 
	\[
	\sum_{k\in\Z^d} \prod_{j=1}^d (1+4\pi |k_j|^2)^{-s} 2\pi |k_j|^{\beta_j}
	<\infty.
	\]
	We compare its tail with integral
	\begin{align*}
		\int_1^\infty \int_{\|x\|_\infty=t} \ t^{d-1}  \prod_{j=1}^d |x_j|^{-2s+\beta_j} \ dxdt
		&= \int_1^\infty t^{-2ds+|\beta|+d-1}\ dt.
	\end{align*}
	The latter converges if $|\beta|<d(2s-1)$, which proves that $K\in C^{r}(\T^d)$. Next, we need to show that $\partial^\beta K$ for each $|\beta|=r$ belongs to the appropriate Besov space. To this end, we start with
	\begin{align*}
		\sum_{2^m\leq \|k\|_2\leq 2^{m+1}} |\hat{\partial^\beta K}(k)|
		&\leq \sum_{2^m\leq \|k\|_2\leq 2^{m+1}} \prod_{j=1}^d (1+4\pi^2 |k_j|^2)^{-s} 2\pi |k_j|^{\beta_j}.
	\end{align*}
	For large $m\geq 1$, we compare with the integral
	\begin{align*}
		\int_{\{2^m\leq \|x\|_2\leq 2^{m+1}\}} \prod_{j=1}^d |x_j|^{-2s+\beta_j} \ dx
		&\leq  \int_{\{2^m/\sqrt d \leq \|x\|_\infty \leq 2^{m+1}\}} \prod_{j=1}^d |x_j|^{-2s+\beta_j} \ dx,
	\end{align*}
	where we used that $\{2^m\leq \|x\|_2\leq 2^{m+1}\}\subset \{2^m/\sqrt d \leq \|x\|_\infty \leq 2^{m+1}\}$. Then we have
	\begin{align*}
		\int_{2^m/\sqrt d \leq \|x\|_\infty \leq 2^{m+1}} \prod_{j=1}^d |x_j|^{-2s+\beta_j} \ dx 
		&\leq C_d \prod_{j=1}^d \, \int_{2^m/\sqrt d}^{2^{m+1} } x_j^{-2s+\beta_j} \ dx_j 
		\leq C_{d,s} 2^{-m\alpha}.
	\end{align*}
	This proves that $\partial^\beta K$ is H\"older continuous of order $\alpha$.
\end{proof}

We see that if $h_X$ is on the order of $n^{-1/d}$, then the generalization error decays on the order of $n^{-s}$. It is also interesting to note that the interpolant allows for irregular sampling sets $X$ and can be numerically computed.

\subsection{Least squares vs minimum norm}

\label{appendixD}

This section describes a relationship between the following two algorithms. Assume $\Phi$ and $\omega$ are compatible, and that $X=\{x_k\}_{k=1}^n$ is sampling for $\Phi$. Given prescribed data $(X,y)$, define the functions
\[
f_p
:=\argmin_{f\in \text{span}\{\varphi_k\}_{k=1}^p}
\begin{cases} 
	\ \displaystyle \sum_{j=1}^n 
	\big( y_j - f(x_j)\big)^2 \ &\text{if } p\leq n, \\ 
	\ \|f\|_{\Phi,\omega} \text{ such that } f=y \text{ on } X &\text{if } p\geq p_X. 
\end{cases}
\]
We follow the notation introduced in \cref{appendixA1}. The following proposition provides an explicit formula for the solution to this problem. 

\begin{proposition}
	Assume $\Phi$ and $\omega$ are compatible, and $X=\{x_k\}_{k=1}^n$ is sampling for $\Phi$. For any $p\geq 1$ such that $\bfA_p$ has full rank, we have
	\[
	f_p(x)=\sum_{k=1}^n K_p(x,x_k) (\bfK_p^{\dagger} \ y)_k.
	\]
\end{proposition}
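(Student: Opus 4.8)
The plan is to reduce the claim, in both regimes of the definition of $f_p$, to the single assertion that the coefficient vector of $f_p$ equals $\bfW\bfA_p\bfK_p^\dagger y$. Working in the notation of \cref{appendixA1}, I would identify $f=\sum_{k=1}^p c_k\varphi_k$ with $c\in\R^p$ and set $v(x):=(\varphi_1(x),\dots,\varphi_p(x))^t$, so that $f(x)=v(x)^t c$, $f(x_j)=(\bfA_p^t c)_j$, $v(x_k)=\bfA_p e_k$, and $K_p(x,y)=v(x)^t\bfW v(y)$. These identities give $\sum_{k=1}^n K_p(x,x_k)(\bfK_p^\dagger y)_k=v(x)^t\bfW\bfA_p\bfK_p^\dagger y$, so, since the map $c\mapsto\sum_k c_k\varphi_k$ is injective by orthonormality of $\Phi$, it suffices to show that the coefficient vector of $f_p$ is $\bfW\bfA_p\bfK_p^\dagger y$. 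In the interpolation regime $p\ge p_X$ this is immediate: by \cref{lem:prep1} the matrix $\bfK_p=\bfA_p^t\bfW\bfA_p$ is invertible, so $\bfK_p^\dagger=\bfK_p^{-1}$ and the desired identity is exactly \cref{lem:prep3}.

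The substantive case is $p\le n$ with $\bfA_p$ of full (row) rank $p$, so that $\bfA_p^t$ has full column rank and the least-squares problem $\min_{c\in\R^p}\|y-\bfA_p^t c\|_2^2$ has the unique minimizer $c_p$, characterized by the normal equations $\bfA_p\bfA_p^t c_p=\bfA_p y$. I would verify that $c_p^\star:=\bfW\bfA_p\bfK_p^\dagger y$ also solves these equations, which forces $c_p^\star=c_p$ and hence the formula. Using $\bfK_p=\bfA_p^t\bfW\bfA_p$ gives $\bfA_p^t c_p^\star=\bfK_p\bfK_p^\dagger y$, so
\[
\bfA_p\bigl(\bfA_p^t c_p^\star-y\bigr)=-\bfA_p\bigl(I-\bfK_p\bfK_p^\dagger\bigr)y,
\]
and the plan is to show the right side vanishes. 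Since $\bfK_p$ is symmetric, $I-\bfK_p\bfK_p^\dagger$ is the orthogonal projection onto $\text{null}(\bfK_p)$; and factoring $\bfK_p=(\bfW^{1/2}\bfA_p)^t(\bfW^{1/2}\bfA_p)$, with $\bfW^{1/2}$ invertible because $\bfW$ is diagonal with positive entries $\omega_k^{-1}$, yields $\text{null}(\bfK_p)=\text{null}(\bfW^{1/2}\bfA_p)=\text{null}(\bfA_p)$. Hence $(I-\bfK_p\bfK_p^\dagger)y\in\text{null}(\bfA_p)$ and the expression is zero.

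The only mildly delicate point — the ``hard part,'' although it is just routine linear algebra — is this last case: getting the pseudoinverse identities right, namely that $\bfK_p\bfK_p^\dagger$ is the orthogonal projector onto $\text{range}(\bfK_p)=\text{null}(\bfK_p)^\perp$ (using symmetry of $\bfK_p$) and that $\text{null}(\bfB^t\bfB)=\text{null}(\bfB)$ for a Gram product. Everything else is the bookkeeping set up in the first paragraph, together with uniqueness of the least-squares solution under the full-rank hypothesis.
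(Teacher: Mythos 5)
Your proof is correct, and it follows the paper's overall skeleton — reduce everything to the coefficient identity $\hat f_p=\bfW\bfA_p\bfK_p^{\dagger}y$, dispose of the $p\geq p_X$ branch via \cref{lem:prep1} and \cref{lem:prep3} (where $\bfK_p^{\dagger}=\bfK_p^{-1}$), and then treat the least-squares branch — but the mechanism in the least-squares branch is genuinely different. The paper computes the solution explicitly: it writes $\hat f_p=(\bfA_p^t)^{\dagger}y$, introduces $\bfB:=\bfA_p^t\bfW^{1/2}$, and converts $(\bfA_p^t)^{\dagger}$ into $\bfW\bfA_p\bfK_p^{\dagger}$ through pseudoinverse product identities ($\bfB^{\dagger}=\bfB^t(\bfB\bfB^t)^{\dagger}$ together with the fact that an invertible factor can be pulled out of the pseudoinverse). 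You instead verify that the candidate $c_p^{\star}=\bfW\bfA_p\bfK_p^{\dagger}y$ satisfies the normal equations $\bfA_p\bfA_p^t c=\bfA_p y$, using that $\bfK_p\bfK_p^{\dagger}$ is the orthogonal projector onto $\mathrm{range}(\bfK_p)$ and that $\mathrm{null}(\bfK_p)=\mathrm{null}(\bfW^{1/2}\bfA_p)=\mathrm{null}(\bfA_p)$ from the Gram factorization, and then invoke uniqueness of the least-squares minimizer under the full-rank hypothesis. The verification route buys a slightly more robust argument: it avoids the pseudoinverse product manipulations (which require care about when $(\bfC\bfD)^{\dagger}$ factors), and it makes transparent that $c_p^{\star}$ solves the normal equations even without full rank — the full-rank assumption is only used to pin down uniqueness. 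The paper's route, in exchange, produces the formula constructively rather than checking a guessed answer. Both are complete; no gaps.
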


\begin{proof}
	The $p\geq p_X$ case is implied by Lemma \ref{lem:prep3} since $\bfK_p^\dagger=\bfK_p^{-1}$. For the case where $p\leq n$, the matrix $\bfK_p=\bfA_p^t \bfW \bfA_p$ is possibly rank deficient. Since $\sum_{j=1}^n 
	( y_j - f(x_j))^2=\|y-\bfA_p^t \hat f_p\|_2^2$ and $\bfA_p^t$ is assumed to have full column rank, we see that
	\[
	\hat f_p
	=(\bfA_p^t)^\dagger y
	=(\bfA_p \bfA_p^t)^{-1} \bfA_p y. 
	\]
	On the other hand, $\bfB:=\bfA_p^t \bfW^{1/2}$ satisfies
	\[
	\bfB^\dagger 
	= \bfB^t (\bfB \bfB^t)^\dagger
	= \bfW^{1/2} \bfA_p (\bfA_p^t \bfW \bfA_p)^{\dagger} 
	= \bfW^{1/2} \bfA_p \bfK_p^\dagger. 
	\]
	Thus, we have
	\[
	\hat f_p 
	=(\bfA_p^t)^\dagger y 
	=\bfW^{1/2} \bfB^\dagger y
	=\bfW \bfA_p \bfK_p^\dagger y,
	\]
	and so for each $x\in\Omega$,
	\[
	f(x)
	=\sum_{j=1}^p (\hat f_p)_j \varphi_j(x)
	=\sum_{j=1}^p \sum_{k=1}^n \omega_j^{-1} \varphi_j(x_k) (\bfK_p^\dagger \, y)_k \varphi_j(x)
	=\sum_{k=1}^n K(x,x_k) (\bfK_p^\dagger \, y)_k.
	\]
\end{proof}

This proposition is known for the unweighted case ($\omega_j=1$ for each $j\geq 1$), in which case, it connects the least squares solution of an over-determined linear system to the minimum norm one to an under-determined. Perhaps, it is not as widely known that this statement also holds for weighted norms.

\subsection{Additional experiments for trigonometric interpolation}

\label{sec:smnumerical}

In our next set of experiments, we study the same experimental setup that was discussed in \cref{sec:trignumerical} and explore the effects of increasing the number of samples $n$. Comparing \cref{fig:double1} (a) with \cref{fig:double2} (a) and (b), we see that increasing $n$ decreases the plateau, which is consistent with our theory that the plateau decreases in $h_X$. An interesting feature of these experiments is that the over-parameterized interpolant that achieves the smallest error in the $p\to\infty$ limit for $n=35,105,205$ are $s=4,3,2$ respectively. One explanation is that the Runge function is not differentiable at the singularity $x=\pm 1/2$, but is smooth elsewhere. For smaller values of $n$, hence larger $h_X$, the singularity does not have a significant impact, so smoother over-parameterized interpolants perform better. For larger $n$ and hence smaller $h_X$, the singularity becomes more influential and smoother interpolants suffer.

\begin{figure}[h]
	\centering
	\hspace{-2em}
	\begin{subfigure}[b]{0.5\textwidth}
		\includegraphics[width=\textwidth]{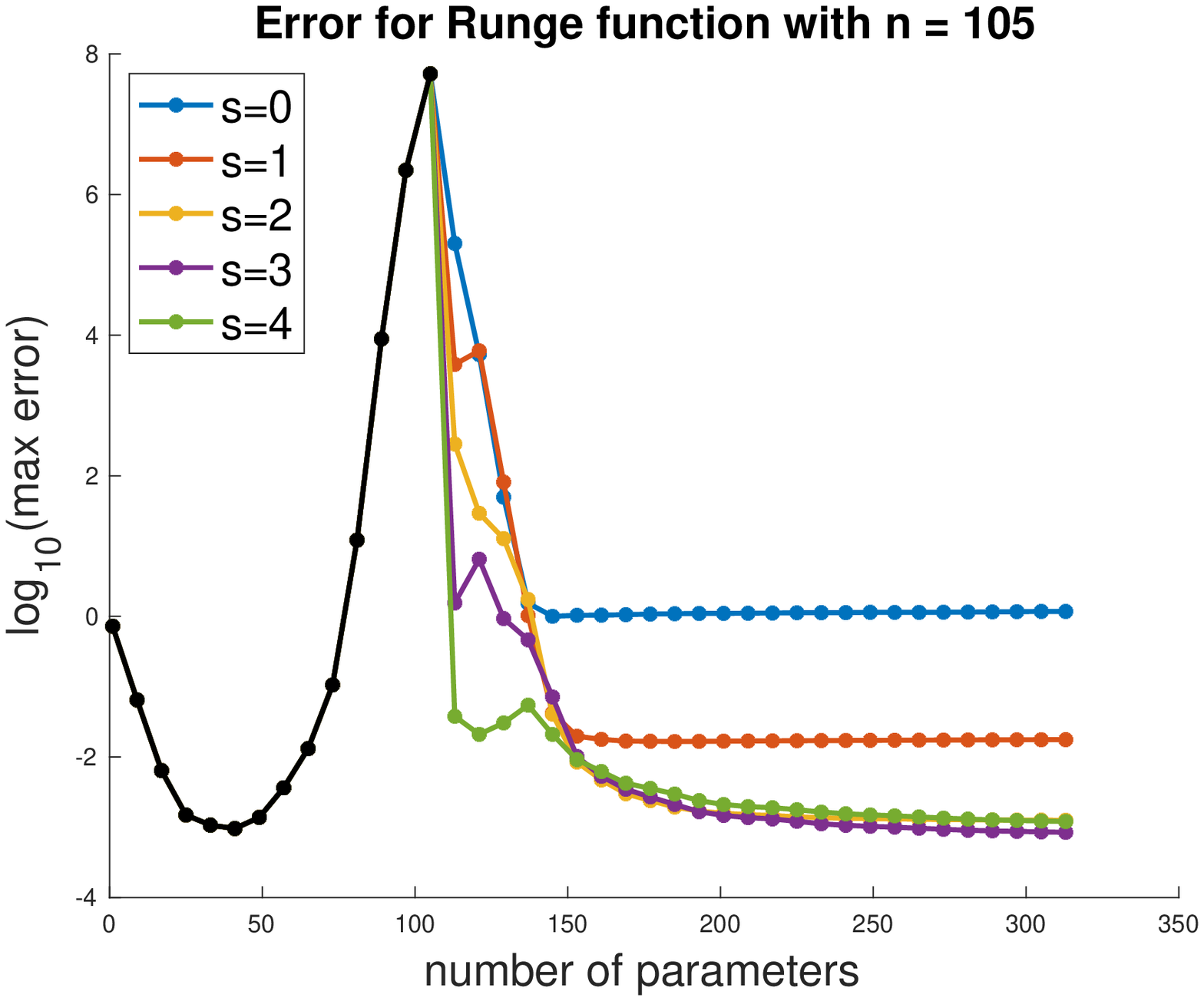}
		\caption{Error $E_\infty(R,f_p)$ with $n=105$}
	\end{subfigure}
	\hspace{-2em}
	\begin{subfigure}[b]{0.5\textwidth}
		\includegraphics[width=\textwidth]{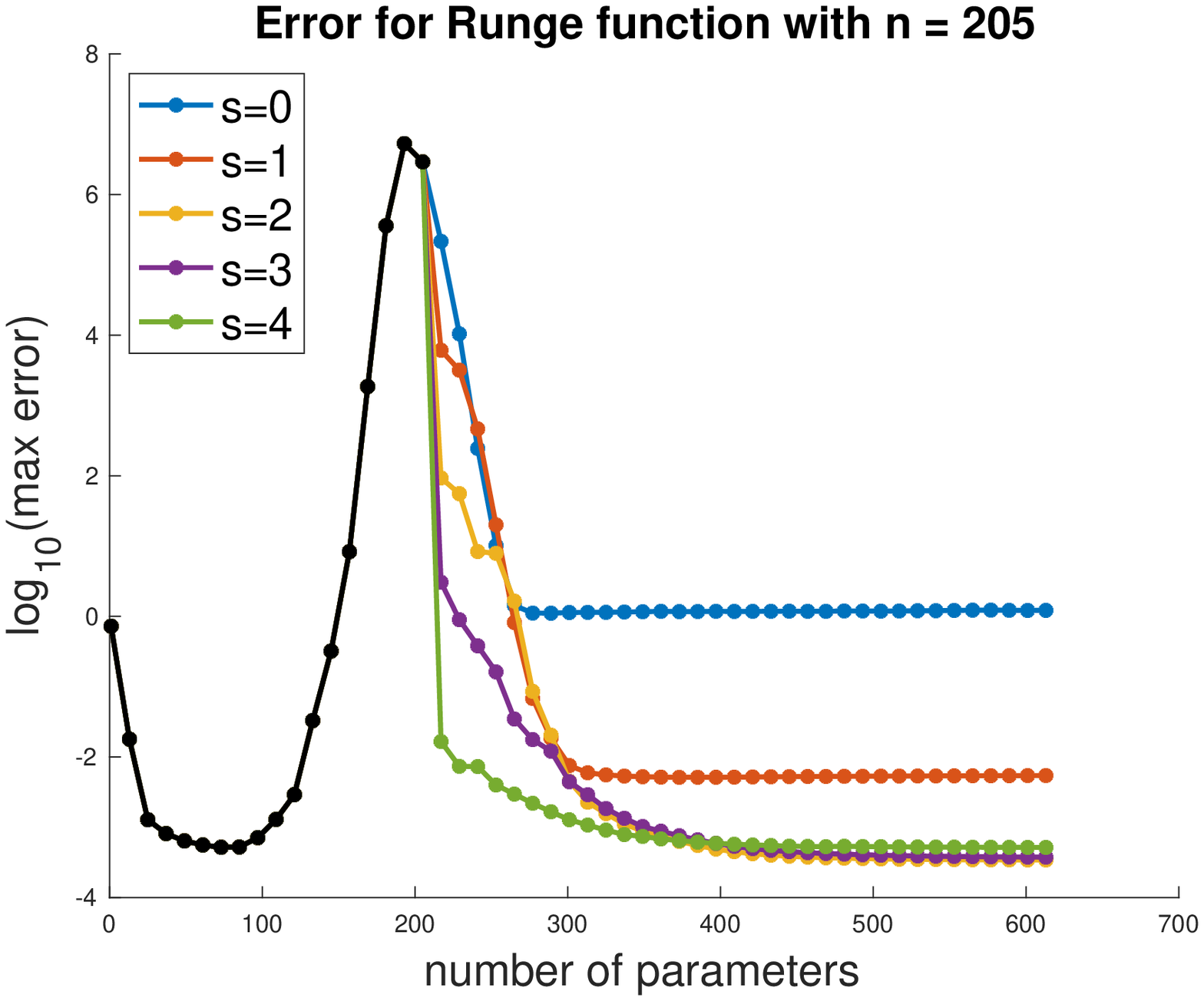}
		\caption{Error $E_\infty(R,f_p)$ with $n=205$}
	\end{subfigure}
	
	\caption{Both figures show the $L^\infty$ error between the Runge function and the least squares solutions in the under-parameterized regime (black) and the minimum $W^s(\T)$ norm interpolant for $s=0,1,2,3,4$ (blue, red, yellow, purple, green) in the over-parameterized regime as the number of parameters $p$ varies. The results are averaged over 100 random realizations of a sampling set consisting of $n$ points drawn independently from the uniform measure on $[-1/2,1/2]$. }
	\label{fig:double2}
\end{figure}

\section{NTK is not strictly positive-definite}
\label{appendixF}

\begin{proposition}
	\label{prop:ntk1}
	The neural tangent kernel $K_{\tan}$ is not strictly positive-definite on $\S^{d-1}$.
\end{proposition}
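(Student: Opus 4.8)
The plan is to exhibit a small set $X=\{x_j\}_{j=1}^{n}\subset\S^{d-1}$ of \emph{distinct} points and a nonzero vector $u\in\R^{n}$ at which the quadratic form of $K_{\tan}$ vanishes; since every Mercer kernel is positive-definite, vanishing (rather than negativity) is the only possible obstruction to strict positive-definiteness, so this is enough. Starting from the Mercer expansion \eqref{eq:nkt}, expanding $K_{\tan}(x_j,x_k)=\sum_{\ell\colon\sigma_\ell>0}\sigma_\ell^{-1}\sum_{m}Y_{\ell,m}(x_j)Y_{\ell,m}(x_k)$ and summing gives, for any $X$ and $u$,
\[
\sum_{j,k=1}^{n} K_{\tan}(x_j,x_k)\,u_j u_k
=\sum_{\ell\colon\sigma_\ell>0}\ \sigma_\ell^{-1}\sum_{m=1}^{N_\ell}\Big(\sum_{j=1}^{n} u_j\,Y_{\ell,m}(x_j)\Big)^{2},
\]
a sum of nonnegative terms. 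Hence it suffices to choose $X$ and $u\neq 0$ so that $\sum_{j} u_j Y_{\ell,m}(x_j)=0$ for every $m$ and every $\ell$ appearing in \eqref{eq:nkt}, namely $\ell=0$, $\ell=1$, and all even $\ell\ge 2$. I will use two standard facts: $Y_{\ell,m}(-x)=(-1)^{\ell}Y_{\ell,m}(x)$, and the degree-one harmonics $\{Y_{1,m}\}_{m=1}^{d}$ span precisely the linear functionals $x\mapsto\langle x,c\rangle$, $c\in\R^{d}$.

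Next I would take $X$ to consist of three antipodal pairs, arranged so as also to annihilate the degree-one component. Fix a two-dimensional subspace $V\subset\R^{d}$ (available since $d\ge 3$) and unit vectors $w_1,w_2,w_3\in V$ at pairwise angles $2\pi/3$, so that $w_1+w_2+w_3=0$; set $X=\{\pm w_1,\pm w_2,\pm w_3\}$, which are six distinct points since no $w_i$ equals $\pm w_j$ for $i\neq j$, and let $u$ assign $+1$ to each $w_i$ and $-1$ to each $-w_i$. For even $\ell$ (including $\ell=0$), evenness of $Y_{\ell,m}$ makes the two members of each antipodal pair cancel, so $\sum_j u_j Y_{\ell,m}(x_j)=0$. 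For $\ell=1$, writing $Y_{1,m}(x)=\langle x,c_m\rangle$ and using oddness, $\sum_j u_j Y_{1,m}(x_j)=2\sum_{i=1}^{3}Y_{1,m}(w_i)=2\langle w_1+w_2+w_3,\,c_m\rangle=0$. Substituting into the displayed identity gives $\sum_{j,k}K_{\tan}(x_j,x_k)u_j u_k=0$ with $u\neq 0$, which proves the proposition.

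The one subtle point — and the place a naive attempt fails — is that the single antipodal pair $X=\{x,-x\}$ with $u=(1,-1)$, which kills all even harmonics, does \emph{not} kill the $\ell=1$ term: odd functions \emph{add} across antipodes rather than cancel. Removing that leftover degree-one contribution is exactly why one needs directions summing to zero, and two unit vectors admit no nontrivial vanishing linear combination unless they are antipodal; hence at least three antipodal pairs are required. Everything else is a routine verification using the parity of spherical harmonics and the identification of degree-one harmonics with linear functionals. An equivalent, more conceptual phrasing is that $H_{K_{\tan}}(\S^{d-1})$ contains no odd spherical harmonics of degree $\ge 3$, so evaluation on such an $X$ cannot be surjective and the kernel matrix is singular.
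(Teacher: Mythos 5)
Your proof is correct and follows essentially the same route as the paper: expand the quadratic form via the Mercer series, use an odd coefficient vector on a symmetric (antipodal) set to annihilate all even-degree terms, and then arrange for the remaining $\ell=1$ contribution to vanish. The only difference is in that last step --- the paper takes $n>d$ antipodal pairs and invokes a nontrivial null vector of the $d\times n$ matrix $\big(Y_{1,m}(a_j)\big)_{m,j}$, whereas you exhibit an explicit, dimension-independent six-point configuration with $w_1+w_2+w_3=0$; both are valid.
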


\begin{proof}
	To prove that it is not strictly positive-definite, we provide an example of a set $X\subset\S^{d-1}$. Let $n>d$ and pick any finite set $X$ of the form $X=\{x_j\}_{j=1}^{2n}=\{a_j\}_{j=1}^n\cup \{-a_j\}_{j=1}^n\subset\S^{d-1}$. So $X$ is a symmetric set of cardinality $2n$. Let $u\colon X\to\R$ be an odd function which will be specified later. Since $Y_{\ell,m}$ is an even function for even $\ell$ and $u$ is odd, we see that 
	\begin{align*}
		\sum_{j,k=1}^{2n} K_{\tan}(x_j,x_k) u(x_j) u(x_k)
		&=\sum_{\ell=0}^\infty \sigma_\ell \sum_{m=1}^{N_\ell} \Big| \sum_{j=1}^{2n}  Y_{\ell,m}(x_j) u(x_j)\Big|^2 \\
		&= \sum_{\ell=0}^\infty \sigma_\ell \sum_{m=1}^{N_\ell} \Big| \sum_{j=1}^{n} \( Y_{\ell,m}(a_j) u(a_j) + Y_{\ell,m}(-a_j)u(-a_j) \) \Big|^2 \\
		&=4\sigma_1 \sum_{m=1}^d \Big| \sum_{j=1}^{n} Y_{1,m}(a_j)u(a_j) \Big|^2. 
	\end{align*}
	Since $d<n$, there exists a nontrivial vector in the null space of the $d\times n$ matrix $A:=(Y_{1,m}(a_j))_{m,j}$. Thus, letting $u(a_j)$ be any such vector and defining $u(-a_j)=-u(a_j)$ shows that there exists a non-zero odd $u\colon X\to\R$ for which the above quadratic form is zero. 
	
\end{proof}

\section*{Acknowledgments}

The author thanks Sinan G\"unt\"urk for valuable discussions and  gratefully acknowledges support from the AMS--Simons Travel Grant.

\bibliographystyle{plain}
\bibliography{GE}

\end{document}